\documentclass[11pt, a4paper]{amsart}
\usepackage{amssymb, amsmath, amsthm, amsfonts, mathtools, multicol, inputenc}

\usepackage{fancyhdr}
\usepackage[a4paper,left=25mm,right=25mm,top=30mm,bottom=30mm]{geometry}

\newtheorem{theorem}{Theorem}[section]
\newtheorem{corollary}[theorem]{Corollary}
\newtheorem{lemma}[theorem]{Lemma}
\newtheorem{proposition}[theorem]{Proposition}

\newtheorem{example}[theorem]{Example}
\newtheorem{problem}[theorem]{Problem}

\theoremstyle{definition}

\DeclarePairedDelimiter{\ceil}{\lceil}{\rceil}
\setlength\parindent{10pt}

\usepackage[normalem]{ulem}

\title{The Modular Isomorphism Problem: a survey}
\author[L.~Margolis]{Leo Margolis}
 \address{Instituto de ciencias matem\'aticas, C/ Ni\'colas Cabrera 13, 28049 Madrid, Spain}
 \email{{leo.margolis@icmat.es}}

\keywords{Modular Isomorphism Problem, modular group algebras, $p$-groups}
\subjclass[2010]{20C05, 20D15, 16S34, 16L60}
\thanks{The author acknowledges financial support from the Spanish Ministry of Science and Innovation, through the “Severo Ochoa Programme for Centres of Excellence in R\&D (CEX2019-000904-S)}

\begin{document}

\begin{abstract}
The Modular Isomorphism Problem asks if an isomorphism of group algebras of two finite $p$-groups $G$ and $H$ over a field of characteristic $p$, implies an isomorphism of the groups $G$ and $H$. We survey the history of the problem, explain strategies which were developed to study it and present the recent negative solution of the problem. The problem is also compared to other isomorphism problems for group rings and various question remaining open are included.
\end{abstract}

\maketitle

\tableofcontents

\section{Introduction}

Group rings are algebraic objects which come up in many areas of pure and applied mathematics. They were first introduced at the break of the 19th and 20th century in group theory to study groups embedded via representations in groups of matrices, but soon appeared also in topology or functional analysis and are used for instance to generate error correcting codes. But after some time group rings became an object important enough in itself, so that many mathematicians began studying their properties from different perspectives.

To introduce group rings let $G$ be a group and $R$ a ring. Then the \emph{group ring} of $G$ over $R$, denoted as $RG$, is the free $R$-module with basis $G$, i.e.
\[RG = \left\{\sum_{g \in G} r_g \cdot g \ | \ r_g \in R, \ r_g = 0 \text{ almost everywhere } \right\}. \]
Hence, in the case when $R$ is a field we can just think of $RG$ as an $R$-vector space with the basis elements being labeled by the elements of $G$. Then $RG$ has a natural structure of addition, just by adding the coefficients of the corresponding basis elements, but as the basis $G$ has a natural multiplication given by its group structure, we can also define a multiplication on $RG$. For this we let elements of $R$ and $G$ formally commute and extend the multiplication of $G$ associatively and distributively to $RG$. In formulas:
\[\sum_{g \in G}r_g g \cdot \sum_{g \in G}s_g g = \sum_{g \in G} \left(\sum_{h,k \in G, \ hk = g}r_hs_k  \right)g. \]
In this way $RG$ becomes a ring with a group theoretical flavor. Though group rings can be defined for any group and ring, most work is dedicated to the case when $R$ is commutative and we will restrict ourselves to this case in this article.

Many interesting properties of a group ring $RG$ can be studied, a natural starting point being for instance ring-theoretical properties such as for which groups and rings $RG$ is a semisimple ring or which properties the ideals of $RG$ have. Indeed, most early research in the subject was carried out in this direction, as summarized in the first books and comprehensive surveys entirely devoted to group rings \cite{PassmanInfiniteGroupRings, ZalesskiiMihalev73, BovdiBook74, PassmanAlgebraicStructure}. Another view point is to ask in reverse what can be deduced about the group $G$ from the ring structure of $RG$. An easy example would be the order of $G$, which is the $R$-rank of $RG$, or the property of $G$ being abelian, which is the case if and only if $RG$ is commutative. The strongest information one could hope to obtain about $G$ is of course its isomorphism type and this is the content of the following:

\begin{quote}\textbf{The Isomorphism Problem (for group rings):} For $R$ a ring and $G$ and $H$ groups, does an isomorphism $RG \cong RH$ of $R$-algebras imply a group isomorphism $G \cong
H$?
\end{quote}

The first to approach this problem was G.~Higman who studied the group rings of finite groups over the integers in his PhD thesis \cite{Higman1940Thesis} and showed e.g.\ that when $G$ is a finite abelian group, then $\mathbb{Z}G \cong \mathbb{Z}H$ indeed implies that $G \cong H$. Many particular challenging concrete formulations of the Isomorphism Problem were given and solved over time and we review some of the history in the next section. The most famous formulation of the problem which remained open until very recently will be the protagonist for us:

\begin{quote}\textbf{The Modular Isomorphism Problem (MIP):} Let $p$ be a prime, $k$ a field of characteristic $p$, $G$ a finite $p$-group and $H$ some group. Does an isomorphism $kG \cong kH$ imply an isomorphism $G \cong H$? 
\end{quote}

Though the problem had been intensively studied by some authors, positive results remained rather elusive. 
But in the last years it was again attacked independently by several researchers \cite{BaginskiKurdics, Sakurai, MargolisMoede, MargolisStanojkovski21, BrochedelRio21, MargolisSakuraiStanojkovski21} and this culminated in the solution of the problem in its general form:

\begin{theorem}\label{th:counterex}\cite{GarciaLucasMargolisDelRioCounterexample} 
There are non-isomorphic finite $2$-groups $G$ and $H$ such that $kG \cong kH$ for any field $k$ of characteristic $2$.
\end{theorem}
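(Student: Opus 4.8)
The plan is to find a pair of non-isomorphic finite $2$-groups $G$ and $H$ with $kG\cong kH$. Since a candidate counterexample cannot be found among the small groups that previous authors verified by computer, the natural strategy is to search systematically but with guidance: I would fix a target order, say $2^9$, which is small enough to be computationally tractable yet large enough that the known positive results (covering groups of order dividing $2^8$, groups of small nilpotency class, abelian-by-cyclic groups, etc.) do not apply. Within this order I would enumerate pairs of non-isomorphic groups sharing all the standard group-algebra invariants that are known to be determined by $kG$ — the abelianization $G/G'$, the dimensions of the quotients by the Jennings/dimension-subgroup series, the number of conjugacy classes of each order, and so forth — since any counterexample must first pass all of these necessary tests. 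The hope is that filtering a manageable family of groups by these invariants leaves a handful of candidate pairs.

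The decisive step is then to \emph{prove}, for a surviving candidate pair $(G,H)$, that the modular group algebras are genuinely isomorphic over every field $k$ of characteristic $2$, not merely over a fixed finite field. The clean way to do this is to exhibit an explicit isomorphism defined over the prime field $\mathbb{F}_2$: I would write down presentations of $G$ and $H$, construct the corresponding bases of $kG$ and $kH$ indexed by group elements, and produce an explicit $\mathbb{F}_2$-algebra homomorphism $\varphi\colon \mathbb{F}_2 G \to \mathbb{F}_2 H$ by sending each generator of $G$ to a suitable unit of $\mathbb{F}_2 H$ (typically a group element corrected by a term in the augmentation ideal). One checks that $\varphi$ respects the defining relations of $G$ — this is the heart of the computation — and then that $\varphi$ is bijective, for instance because it sends a basis of $kG$ to a basis of $kH$, or by verifying injectivity via triangularity with respect to the weight filtration coming from the augmentation ideal. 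Base change then upgrades the $\mathbb{F}_2$-isomorphism to an isomorphism $kG\cong kH$ for arbitrary $k$ of characteristic $2$, since $kG\cong k\otimes_{\mathbb{F}_2}\mathbb{F}_2 G$.

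The hard part is twofold. First, locating the right pair: the filtering invariants are numerous and several of them (especially those extracted from the structure of the associated graded algebra and from the behavior of $p$-th powers, which control the subtle higher-order agreements) are precisely the ones that had forced all previously tested pairs to be distinguished, so the search must be designed to pass all of them simultaneously, and there is no a priori guarantee a passing pair exists at order $2^9$. Second, and more seriously, even once invariant-matching candidates are in hand, the existence of an actual algebra isomorphism is \emph{not} implied by the coincidence of invariants; it must be witnessed by an explicit map, and verifying that a proposed $\varphi$ both respects all relations and is invertible is a delicate hands-on computation. I would therefore lean on machine computation to generate and test candidate maps, and then extract from a successful machine-found isomorphism an explicit closed form whose correctness can be checked — and ideally verified by hand or by an independent symbolic computation — so that the final statement rests on a reproducible certificate rather than on an opaque search.
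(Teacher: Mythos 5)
Your proposal is a faithful description of the strategy the paper actually follows --- the counterexamples do live at order $2^9$ (the groups $G=\langle x,y,z \mid z=[y,x],\ x^{2^n}=y^{2^m}=z^4=1,\ z^x=z^{-1},\ z^y=z^{-1}\rangle$ and $H=\langle a,b,c \mid c=[b,a],\ a^{2^n}=b^{2^m}=c^4=1,\ c^a=c^{-1},\ c^b=c\rangle$ with $n=4$, $m=3$ give the smallest case), the isomorphism is exhibited over $\mathbb{F}_2$ by sending generators to group elements corrected by terms in the augmentation ideal (the paper maps $x\mapsto a$ and $y\mapsto b(a+b+ab)c$, where $b(a+b+ab)c\equiv b \bmod I(kH)^2$), one checks that these units satisfy the defining relations of $G$, surjectivity follows from a radical-filtration argument (a subalgebra $B$ with $A=B+J(A)^2$ equals $A$), and the general field case follows by base change from $\mathbb{F}_2$ exactly as you say. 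The paper proves non-isomorphy of the pair by comparing the isomorphism types of $C_G(G')$ and $C_H(H')$.

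The genuine gap is that your text never produces the objects whose existence the theorem asserts. For an existence statement of this kind the proof \emph{is} the witness: a specific pair of groups, a specific unit of $kH$, and the explicit computations showing that the unit together with a generator satisfies the relations of $G$ (the delicate points being $\tilde{y}^{2^m}=1$, which in the paper uses that $(1+bc)(1+b)$ is a central class-sum-type element, and $\tilde{z}^4=1$, which uses that the commutator lies in $1+I(kH')kH$ and that this ideal is nilpotent of the right index). Saying that one would search for candidates, that one would write down a map, and that one would verify the relations is a research plan, not a proof; nothing in the proposal certifies that such a pair exists, and indeed you explicitly acknowledge that there is ``no a priori guarantee a passing pair exists at order $2^9$.'' So while the methodology is exactly right and nothing in it would fail, the mathematical content of the theorem --- the construction and its verification --- is entirely absent and would all still have to be supplied.
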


The aim of this survey is to report on the history of the Modular Isomorphism Problem and the methods involved in its study. We will also compare it to similar problems on group rings, especially the Integral Isomorphism Problem, and mention how it connects to other questions on groups and their representations. Though the Modular Isomorphism Problem can be regarded as solved in its generality, many interesting questions remain open and will be mentioned throughout this text. The details on the groups from Theorem~\ref{th:counterex} are given in Section~\ref{sec:Counterex} and a full proof is included.

We pay specific attention to the field appearing in the results. This is a bit cumbersome and thus has usually been avoided by other authors so far, but seems worth in view of the understanding it might add to group algebras in general. Note that in the context of (MIP) the prime field $\mathbb{F}_p$ of $p$ elements provides the strongest restrictions, as from $\mathbb{F}_p$ one can naturally change coefficients for any other field $k$ of characteristic $p$. Formally: 
\begin{align}\label{eq:TensorFp}
\mathbb{F}_pG \cong \mathbb{F}_pH \Rightarrow kG \cong k \otimes_{\mathbb{F}_p}G \cong k \otimes_{\mathbb{F}_p}H \cong kH 
\end{align}
 This even led Passman to state that ``$GF(p)$ is really the only coefficient of field of interest for these groups'' \cite[p. 669]{PassmanAlgebraicStructure}. 

While progress on the Integral Isomorphism Problem had regularly been surveyed in the past, not much was written specifically for the Modular Isomorphism Problem. Some early results and methods were included in \cite{PassmanAlgebraicStructure, Sehgal78}. The last part of \cite{SandlingSurvey} still provides an excellent introduction to the techniques used in its study. Though not officially a survey, the first part of \cite{HertweckSoriano06} provides the most detailed introduction to the problem. Lists summarizing the main results also appeared in \cite{BleherKimmerleRoggenkampWursthorn99} and \cite{EickKonovalov11}. The survey \cite{BovdiUnitsSurvey} gives a detailed overview over results and problems on the unit group of the group algebra of $p$-groups over fields of characteristic $p$.

The article is structured as follows. In Section~\ref{sec:GeneralIPs} we review some history of other isomorphism problems for groups rings, with emphasis on the Integral Isomorphism Problem, and compare them to (MIP). In Section~\ref{sec:History} we mention the first results on (MIP) and list the classes for which a positive solution has been obtained. The following three sections describe some basic methods which are used to study (MIP) starting with Jennings' theory of dimension subgroups and the first consequences derived from it. We elaborate in more detail the idea to embed group bases in quotients of the group algebra in Section~\ref{sec:quotients}. We then prove Theorem~\ref{th:counterex} and mention some consequences of the theorem and questions it leads to. Finally, in Section~\ref{sec:further} we shortly mention several other methods used to study (MIP) and which results they provided so far.

We will mostly be concentrating on the case of group rings of finite groups and when not stated otherwise, we assume the group $G$ to be finite. Moreover, $R$ will always denote a commutative ring with unity $1_R$.
Throughout we use standard group-theoretical notation. The center of a group $G$ is denoted $Z(G)$, the derived subgroup by $G'$, the centralizer of a subgroup $U$ inside $G$ by $C_G(U)$,  the lower central series of $G$ by $(\gamma_i(G))_{i \in \mathbb{N}}$ and the Frattini subgroup by $\Phi(G)$. By $G^p$ we denote the subgroup of $G$ generated by the $p$-th powers of elements in $G$. We write commutators by $[g,h] = g^{-1}h^{-1}gh$ for elements $g$ and $h$ in a group and $[X,Y]$ for the subgroup of $G$ generated by the commutators of $X$ and $Y$ in $G$ where $X$ and $Y$ are subsets of $G$.

We also fix our way of speaking: when $R$ is a field the group ring $RG$ is called a \emph{group algebra}. If moreover the characteristic of $R$ is $p$ and $G$ contains an element of order $p$, we speak of a \emph{modular group algebra}. Furthermore, a group ring over the integers $\mathbb{Z}$ is called an \emph{integral group ring}, a group algebra over the rationals $\mathbb{Q}$ a \emph{rational group algebra} etc.

\section{General Isomorphism Problems}\label{sec:GeneralIPs}

As mentioned, the first result on the Isomorphism Problem was Higman's proof that the isomorphism type of an integral group ring $\mathbb{Z}G$ determines the isomorphism type of $G$ when $G$ is abelian \cite{Higman40}. In the 1950's and 60's many authors compared the isomorphism types of group rings in various special situations. For instance  Perlis and Walker studied group algebras of abelian groups over fields of characteristic not dividing the group order and in particular showed that $\mathbb{Q}G$ determines $G$ up to isomorphism in this case \cite{PerlisWalker}. On the other hand, Berman was the first to exhibit groups with isomorphic rational group algebras, namely the smallest possible example of non-abelian groups of order $p^3$ for $p$ an odd prime \cite{Berman55}. Many other results for special classes of groups and coefficient rings were achieved and several PhD theses written on the topic, which led to the emergence of three main formulations. Apart from the (MIP) these were

\begin{quote}\textbf{The Integral Isomorphism Problem ($\mathbb{Z}$-IP):} Let $G$ be a finite group and $H$ some group. Does an isomorphism $\mathbb{Z} G \cong \mathbb{Z} H$ imply an isomorphism $G \cong H$? 
\end{quote}

\begin{quote}\textbf{(Fields-IP):} Let $G$ be a finite group and $H$ some group such that $KG \cong KH$ holds for every field $K$. Does this imply that $G \cong H$? 
\end{quote}

Apart from being the original problem studied by Higman ($\mathbb{Z}$-IP) was also included, along with (Fields-IP), in \cite[Section 37]{CR0}. Brauer included the general Isomorphism Problem in his influential survey \cite{Brauer63} where he also posed (Fields-IP) as a special problem and (MIP) inside the text.

Though some positive results have been achieved for (Fields-IP) especially for groups of small order, e.g.\ in \cite{Miah8q}, the problem soon found a general negative answer provided by Dade:

\begin{theorem}\label{th:Dade} \cite{Dade71}
Let $p$ and $q$ be primes such that $q \equiv 1 \bmod p^2$. Then there are non-isomorphic metabelian groups of order $p^3q^6$ which have isomorphic group algebras over any field.
\end{theorem}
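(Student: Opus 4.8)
\emph{Overall plan.} The natural strategy is to realise both groups as split extensions with a common abelian normal Sylow $q$-subgroup and a common Sylow $p$-subgroup, differing only in the action, and then to compare the two group algebras field by field. Concretely, I would look for groups $G = Q \rtimes_\theta P$ and $H = Q \rtimes_{\theta'} P$, where $Q$ is an abelian $q$-group of order $q^6$, $P$ is a $p$-group of order $p^3$, and $\theta, \theta' \colon P \to \mathrm{Aut}(Q)$ are two actions. The hypothesis $q \equiv 1 \bmod p^2$ is exactly what makes $\mathrm{Aut}(Q)$ contain elements of order $p^2$: since $p^2 \mid q-1$, the cyclic group $\mathbb{F}_q^\times \cong \mathrm{Aut}(C_q)$ has an element of order $p^2$, which is the source of the faithful $p$-power actions on $Q$ used below. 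Taking $P$ abelian forces $G' = [Q, P] \le Q$ and likewise $H' \le Q$, so both groups are automatically metabelian of order $p^3 q^6$, as required. Two tasks then remain: to choose $\theta, \theta'$ so that $G \not\cong H$, and so that $kG \cong kH$ for every field $k$.

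\emph{Non-isomorphism.} This I would settle by a group-theoretic invariant sensitive to the $\mathrm{Aut}(Q) \times \mathrm{Aut}(P)$-equivalence class of the action, since an isomorphism $G \cong H$ would have to carry $\theta$ to $\theta'$ up to this symmetry. A subtle point is that the coarse invariant used below to match the algebras, namely the multiset of $P$-orbit lengths on the character group $\widehat{Q}$, necessarily agrees with the corresponding multiset on $Q$ itself by duality, so orbit lengths alone cannot distinguish the groups. I would therefore use the finer datum of the isomorphism type of $Q$ as a module over the group ring of $P$, arranging $\theta$ and $\theta'$ to give inequivalent modules (up to $\mathrm{Aut}(P)$) while still matching the orbit-plus-field data that governs the algebras. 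This is a finite check once the explicit actions are written down.

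\emph{Equal group algebras, the semisimple case.} For a field $k$ with $\mathrm{char}\, k \notin \{p, q\}$ the algebra $kG$ is semisimple, hence determined up to isomorphism by Wedderburn's theorem, that is, by the multiset of its matrix degrees together with the centres of its simple components. Here the normal abelian subgroup $Q$ is the key: by Clifford theory the components of $kG$ are indexed by the $P$-orbits on $\widehat{Q}$, an orbit $O$ with stabiliser $P_\lambda$ contributing a component of the form $M_{[P:P_\lambda]}(k_O)$, where $k_O$ is the field of values of $\lambda$ over $k$. Because the extension $Q \rtimes P$ is \emph{split}, each linear character of $Q$ extends to its stabiliser, so the relevant Schur indices are trivial and every component really is a matrix algebra over a field. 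Thus $kG \cong kH$ for all such $k$ reduces to the combinatorial statement that $\theta$ and $\theta'$ produce the \emph{same} multiset of pairs (orbit length, field $k_O$) for every $k$. I would engineer $\theta'$ as a Galois/Frobenius twist of $\theta$: a reshuffling of the orbits on $\widehat{Q}$ that preserves orbit lengths and the associated field data over every $k$, while globally altering the action.

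\emph{The modular primes and the main obstacle.} The two remaining characteristics must be handled directly, and this is where the real difficulty lies. In characteristic $q$ the algebra $kQ$ is local, $Q = O_q(G)$ lies in the kernel of every simple $kG$-module, and one must compare the full, non-semisimple algebras $kG$ and $kH$ through the radical filtration of $kQ$ together with the (coprime) $P$-action, not merely their semisimple quotients $kP$. In characteristic $p$ the roles reverse: $kQ$ is commutative semisimple, $P$ permutes its primitive idempotents, and each block of $kG$ becomes a matrix algebra over a \emph{twisted group algebra} of a stabiliser in $P$ over a field extension $k_O$ — itself a modular group algebra of a $p$-group, precisely the kind of object (MIP) concerns, with extra delicacy over imperfect fields $k$. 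The heart of the proof is therefore to choose $\theta, \theta'$ so that all three regimes match simultaneously: the orbit-and-field data of the semisimple case, the $P$-equivariant structure of the local algebra $kQ$ in characteristic $q$, and the twisted group algebras of the stabilisers in characteristic $p$. Keeping these compatibilities satisfied at once, while still preventing $G \cong H$, is the main obstacle, and it is exactly what the rigid numerical coincidence $q \equiv 1 \bmod p^2$ together with the rank-$6$ module $Q$ are designed to make possible.
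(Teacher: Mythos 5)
The survey itself does not prove this theorem; it is quoted from Dade's 1971 paper, so your proposal can only be judged on whether it would stand alone as a proof. It does not: it is a plan in which every load-bearing step is deferred. You never define the actions $\theta$ and $\theta'$, you never verify that the resulting groups are non-isomorphic, and you explicitly label the verification of $kG\cong kH$ in characteristics $p$ and $q$ as ``the main obstacle'' without overcoming it. The framework you set up --- a common abelian normal Sylow $q$-subgroup $Q$ of order $q^6$, a complement $P$ of order $p^3$, Clifford theory reducing the coprime characteristics to orbit-and-field data on $\widehat{Q}$, and a block-by-block comparison at the primes $p$ and $q$ --- is a sensible habitat for such examples, and several of your supporting observations are correct: $q \equiv 1 \bmod p^2$ supplies automorphisms of order $p^2$ of $C_q$; abelian $P$ forces $G' = [Q,P]\le Q$ and hence metabelianity; and for a coprime action the fixed-point counts of every subgroup of $P$ on $Q$ and on $\widehat{Q}$ agree, so orbit lengths alone cannot separate the groups and a finer module-theoretic invariant is needed. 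But the entire mathematical content of the theorem is the exhibition of a specific pair $(\theta,\theta')$ for which all three regimes match simultaneously while the $P$-module structures on $Q$ (up to $\mathrm{Aut}(P)\times\mathrm{Aut}(Q)$) do not, together with the computations certifying each algebra isomorphism. ``I would engineer $\theta'$ as a Galois/Frobenius twist of $\theta$'' is a heuristic, not a construction; nothing in the proposal shows that such a twist exists, that it is realized by a genuinely inequivalent action, or that it survives the two modular characteristics. As written, the proposal proves nothing beyond the (easy) reduction of the problem to a construction it does not supply.

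There is also a secondary imprecision worth flagging. In the semisimple case a $P$-orbit $O$ on $\widehat{Q}$ with stabiliser $P_\lambda$ does not contribute a single component $M_{[P:P_\lambda]}(k_O)$; it contributes one component for each Galois class of irreducible characters $\mu$ of $P_\lambda$, with matrix degree $[P:P_\lambda]\mu(1)$ and centre the field generated by the values of both $\lambda$ and $\mu$. Your characteristic-$p$ paragraph states the correct block-theoretic version (a matrix algebra over a group algebra of the stabiliser), so the slip is repairable, but it changes the matching condition you would need to verify ``for every field $k$'': the data to be compared is the joint field of values $k(\lambda,\mu)$ together with possible Schur indices, not merely $k_O$. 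None of this can be checked until the two groups are actually written down --- which is exactly what is missing.
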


Recall that a group $G$ is \emph{metabelian}, if it contains a normal abelian subgroup $N$ such that $G/N$ is also abelian. Another pair of groups which have isomorphic group algebras over any field was exhibited by Roggenkamp \cite[Chapter VIII]{RoggenkampTaylor}.

Note that if one does not put any restrictions on the structure of $G$, the formulation ($\mathbb{Z}$-IP) is the strongest one possible: as any ring $R$ is naturally a $\mathbb{Z}$-module, there is a change of coefficients from $\mathbb{Z}$ to $R$ keeping intact isomorphisms between group rings, just as in \eqref{eq:TensorFp}:
\[\mathbb{Z}G \cong \mathbb{Z}H \Rightarrow RG \cong  RH. \]
This is also demonstrated by the fact that already when Dade's counterexamples to (Fields-IP) appeared, it was clear that they do not provide counterexamples to ($\mathbb{Z}$-IP) due to the following result from the thesis of Whitcomb:

\begin{theorem}\cite{Whitcomb} 
Let $G$ be a metabelian group. Then an isomorphism $\mathbb{Z}G \cong \mathbb{Z}H$ implies an isomorphism $G \cong H$.
\end{theorem}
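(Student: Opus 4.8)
The plan is to reconstruct $G$ from the ring $\mathbb{Z}G$ in two stages: first its abelianization, and then, exploiting that $G$ is metabelian, the full group up to its second derived subgroup. Write $I(G)$ for the augmentation ideal and, for $N \trianglelefteq G$, let $\Delta(G,N)$ be the kernel of $\mathbb{Z}G \to \mathbb{Z}[G/N]$, i.e.\ the two-sided ideal generated by $\{n-1 : n \in N\}$. Two standard reductions come first. Since the $\mathbb{Z}$-rank of $\mathbb{Z}G$ equals $|G|$, an isomorphism forces $|G|=|H|$; and since every unit of $\mathbb{Z}H$ has augmentation $\pm 1$, any ring isomorphism $\phi\colon \mathbb{Z}G \to \mathbb{Z}H$ may be rescaled on group elements to be augmentation-preserving, so I assume $\phi$ is normalized and hence $\phi(I(G))=I(H)$.

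The base case is the abelianization. The assignment $g \mapsto (g-1)+I(G)^2$ induces $G/G' \cong I(G)/I(G)^2$, and as $\phi$ carries each power $I(G)^n$ onto $I(H)^n$ this already gives $G/G' \cong H/H'$. I would also note that the commutator ideal of $\mathbb{Z}G$, the two-sided ideal generated by all $xy-yx$, coincides with $\Delta(G,G')$ (from $gh-hg = hg([g,h]-1)$), so it is a purely ring-theoretic invariant; consequently $\phi(\Delta(G,G'))=\Delta(H,H')$ and every product ideal built from $I$ and $\Delta(\cdot,(\cdot)')$ is preserved by $\phi$.

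The heart of the matter, and where metabelianness is indispensable, is to upgrade this to $G/G'' \cong H/H''$. Because $G'$ is abelian it is a $\mathbb{Z}[G/G']$-module under conjugation, and $G$ is determined by the triple consisting of $G/G'$, this module, and an extension class in $H^2(G/G',G')$; the task is to read all three off the ring. The mechanism I would pursue is Whitcomb's: isolate a canonical ($\phi$-invariant) ideal $J$ whose ``group part'' $\{h\in H : h-1 \in J\}$ is exactly $H''$, and then define $\theta\colon G \to H/H''$ by sending $g$ to the unique group element $h$ with $\phi(g)\equiv h \pmod{J}$, finally checking that $\theta$ descends to an isomorphism $G/G'' \cong H/H''$. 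I expect the main obstacle to be twofold. First, a single augmentation-ideal quotient is not enough: computations like $\Delta(G,G')/\Delta(G,G')I(G)\cong G'/\gamma_3(G)$, or the fact that $\Delta(G,G')/\Delta(G,G')^2$ is an induced-type module that forgets the conjugation action, show that the module-and-extension data is only visible through the multiplicative interaction between the several quotients, not in any one of them. Second, proving that $\theta$ is multiplicative reduces to a congruence $\phi(g)\phi(g')\equiv \phi(gg')\pmod{J}$ whose error terms are built from nested commutators; the scheme closes up only because $G'$, hence $H'$ modulo $H''$, is abelian, so that those nested commutators collapse and the corrections genuinely land in $J$. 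Carrying out this bookkeeping and pinning down the correct ideal $J$ is the technical crux.

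Once $G/G'' \cong H/H''$ is in hand the conclusion is immediate. As $G$ is metabelian, $G=G/G''$, so $G \cong H/H''$; comparing with $|G|=|H|$ forces $H''=1$, whence $H$ is metabelian as well and $G \cong H$.
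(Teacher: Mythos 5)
Your outline correctly identifies the strategy the paper attributes to Whitcomb --- work modulo the ``small group ring'' ideal $J = I(\mathbb{Z}H')I(\mathbb{Z}H) = \Delta(H,H')\Delta(H)$ --- and your endgame is fine: once $G/G''\cong H/H''$ is known, $|G|=|H|$ forces $H''=1$ and hence $G\cong H$. But there is a genuine gap: the entire substance of the proof is announced rather than carried out. You defer ``pinning down the correct ideal $J$'' and the verification that its group part is $H''$ to ``the technical crux''; that crux \emph{is} the theorem. Moreover, the one concrete computation you offer points away from the solution: you assert $\Delta(G,G')/\Delta(G,G')I(G)\cong G'/\gamma_3(G)$ and infer that no single such quotient suffices. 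The correct statement is $\Delta(G,G')/\Delta(G,G')\Delta(G)\cong G'/G''$, induced by $a\mapsto a-1$: since $G'$ is normal, $\Delta(G,G')=\Delta(G')\mathbb{Z}G\cong\Delta(G')\otimes_{\mathbb{Z}G'}\mathbb{Z}G$ as right $\mathbb{Z}G$-modules, and applying $-\otimes_{\mathbb{Z}G}\mathbb{Z}$ gives $\Delta(G')/\Delta(G')^2\cong G'/G''$. (You only lose the conjugation data, i.e.\ collapse to $G'/\gamma_3(G)$, if you also factor out $\Delta(G)\Delta(G,G')$ --- which one must not do, since that kills $\gamma_3(G)$ as well and leaves only $G/\gamma_3(G)G''$, not enough to recover even a dihedral group of order $16$.) This corrected computation is exactly your missing key lemma: together with $H\cap(1+\Delta(H,H'))=H'$ it yields $H\cap(1+J)=H''$.

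The other unproved ingredient is the existence of a group element $h_g$ with $\phi(g)\equiv h_g\pmod{J}$, which is not automatic and does not follow from your $I/I^2$ base case (that only gives the abstract isomorphism $G/G'\cong H/H'$). What is needed is Higman's theorem on torsion units of abelian integral group rings (stated in Section 2.1 of the paper): the image of $\phi(g)$ in $\mathbb{Z}[H/H']$ is a normalized torsion unit, hence a group element, so $\phi(g)=h_0+\delta$ with $\delta\in\Delta(H,H')$; then by the key lemma $\delta\equiv a-1\pmod{J}$ for some $a\in H'$, and $h_0+(a-1)\equiv ah_0\pmod{J}$ because $(a-1)(h_0-1)\in J$. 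Once these two lemmas are in place, the multiplicativity you worry about is immediate --- $\phi(gg')\equiv h_gh_{g'}\pmod{J}$ since $J$ is an ideal, so $h_{gg'}H''=h_gh_{g'}H''$ --- with no nested-commutator bookkeeping, and metabelianness is not used there at all: the conclusion $G/G''\cong H/H''$ holds for arbitrary finite groups, and the hypothesis on $G$ enters only in your final step.
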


This result was achieved using the so-called small group ring, an object we will also exploit for the (MIP) in Section~\ref{sec:SmallGroupAlgebras}.

Due to the solution of (Fields-IP) the (MIP) and the ($\mathbb{Z}$-IP) were the ``two glimmers of hope'' for ``the ultimate problem'' (i.e. the Isomorphism Problem), as expressed by Passman \cite{PassmanAlgebraicStructure}. Both problems also appeared as conjectures in the detailed group ring survey \cite{ZalesskiiMihalev73}. Indeed, both problems remained open for many years to come and the ($\mathbb{Z}$-IP) was more and more intensively investigated.

A seminal result on the ($\mathbb{Z}$-IP) was achieved by Roggenkamp and Scott, and using other methods also independently by Weiss. Namely, they solved ($\mathbb{Z}$-IP) for all $p$-groups, a notoriously difficult class of finite groups, in a strong form. To explain their statement we first introduce some fundamental notions associated to any group ring $RG$: the map
\[\varepsilon: RG \rightarrow R, \ \ \sum_{g \in G} r_g g \mapsto \sum_{g \in G} r_g \]
is called the \emph{augmentation} of $RG$ and is a homomorphism of rings. An element of $RG$ is called \emph{normalized}, if it has augmentation $1$. We denote by $U(RG)$ the \emph{unit group} of $RG$, i.e. the set of elements $x \in RG$ for which there exists an $x'$ in $RG$ such that $xx' = x'x = 1$. Moreover, $V(RG)$ denotes the subgroup of $U(RG)$ consisting of normalized units and a subgroup of $V(RG)$ which is an $R$-basis of $RG$ is called a \emph{group basis} of $RG$. Note that certainly the group $G$ is a group basis of $RG$ via the embedding $g \mapsto 1_R \cdot g$.

Now an isomorphism $\varphi: RH \rightarrow RG$ of rings will embed $H$ in the unit group $U(RG)$. If we only slightly deform $\varphi$ by defining $\varphi': RH \rightarrow RG$ by the linear extension of $h \mapsto (\varepsilon(\varphi(h))^{-1} \varphi(h)$, then we see that $H$ embeds via $\varphi'$ in $V(RG)$. Hence $H$ is isomorphic to a group basis of $RG$ and the Isomorphism Problem can be translated to the question if all the group bases of $RG$ are isomorphic. In this notion we have:

\begin{theorem}\label{th:RoggenkampScottWeiss}\cite{RoggenkampScott87, Weiss88}
Let $G$ be a finite $p$-group and denote by $\mathbb{Z}_p$ the $p$-adic integers. Then any group basis of $\mathbb{Z}_p G$ is conjugate in $V(\mathbb{Z}_p G)$ to $G$.
\end{theorem}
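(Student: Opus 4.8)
The plan is to follow Weiss's approach via permutation lattices over $\mathbb{Z}_p$, which concentrates the entire difficulty into a single deep input. Write $R = \mathbb{Z}_p$ and $A = RG$, and let $H \subseteq V(A)$ be an arbitrary group basis; I must produce a unit $u \in V(A)$ with $u^{-1} G u = H$. Since all elements of $G$ and of $H$ have augmentation $1$, left multiplication by $G$ and right multiplication by $H$ make $A$ into an $R[G \times H]$-module via $(g,h) \cdot a = g a h^{-1}$, and $\varepsilon$ is constant on $(G \times H)$-orbits. Restricted to $G \times 1$ this is just the regular module $RG$ (left multiplication), hence a permutation lattice; restricted to $1 \times H$ it is the regular $RH$-module, again a permutation lattice.

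First I would record an elementary fact about the top fixed points. As $G$ is a $p$-group, $\varepsilon$ is the \emph{unique} ring homomorphism $A \to \mathbb{Z}_p$ (any group element has $p$-power order, so must map to a $p$-power root of unity that is $\equiv 1 \bmod p$, hence to $1$); in particular $\varepsilon$ is intrinsic to the ring $A$ and independent of the chosen group basis. Consequently the two-sided invariant line $\{z \in A : az = \varepsilon(a)z \text{ for all } a \in A\}$ is intrinsic, has $R$-rank one, and is generated both by $\hat G = \sum_{g \in G} g$ and by $\hat H = \sum_{h \in H} h$; comparing augmentations, $\varepsilon(\hat G) = |G| = |H| = \varepsilon(\hat H)$, forces $\hat G = \hat H$. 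Hence $A^{G \times 1} = R\hat G = R\hat H$ carries the trivial $H$-action (as $\hat H h = \hat H$ for $h \in H$), so $A^{G \times 1}$ is the trivial, and therefore permutation, $R[(G \times H)/(G \times 1)]$-lattice.

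The decisive step is to invoke Weiss's rigidity theorem: if $N \trianglelefteq P$ is normal in a finite $p$-group, $M$ an $RP$-lattice with $M|_N$ a permutation $RN$-lattice and $M^N$ a permutation $R[P/N]$-lattice, then $M$ is a permutation $RP$-lattice. Applying this with $P = G \times H$ and $N = G \times 1$, the two observations above yield that $A$ is a permutation $R[G \times H]$-lattice; let $\mathcal{B}$ be a permutation $R$-basis. This is the one genuinely hard ingredient, and I expect it to be the main obstacle: its proof is the delicate lattice-theoretic core of Weiss's work, and it is exactly here that the special arithmetic of $\mathbb{Z}_p$ (completeness, the absence of non-trivial $p$-power roots of unity, and control of the relevant cohomology) is used. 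Everything else is formal.

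Finally I would read off the conjugacy from the biset $\mathcal{B}$. Reducing mod $p$, the number of $G \times 1$-orbits on $\mathcal{B}$ equals $\dim_{\mathbb{F}_p}(A \otimes_R \mathbb{F}_p)^{G \times 1} = \dim_{\mathbb{F}_p}(\mathbb{F}_p G)^{G} = 1$, the invariants of the regular module being spanned by the group sum. Thus $G \times 1$ acts on the $|G|$-element set $\mathcal{B}$ with a single orbit, which is therefore regular, and symmetrically $1 \times H$ acts regularly. Fixing $b_0 \in \mathcal{B}$, for each $g \in G$ there is a unique $\alpha(g) \in H$ with $g b_0 = b_0 \alpha(g)$, and $\alpha \colon G \to H$ is a bijection; moreover $A = A b_0$ forces $b_0 \in U(A)$. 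Hence $b_0^{-1} G b_0 = H$, and replacing $b_0$ by $u = \varepsilon(b_0)^{-1} b_0 \in V(A)$ (which does not change the conjugation, since $\varepsilon(b_0) \in \mathbb{Z}_p^\times$ is central) gives $u^{-1} G u = H$ with $u \in V(\mathbb{Z}_p G)$, as required.
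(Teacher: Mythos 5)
The paper does not actually prove this theorem; it only cites \cite{RoggenkampScott87, Weiss88}, so there is no in-text argument to compare against. Your proposal is a faithful reconstruction of Weiss's proof: the $(G\times H)$-action on $\mathbb{Z}_pG$ via $(g,h)\cdot a=gah^{-1}$, the verification that the restrictions to $G\times 1$ and $1\times H$ are the regular lattices and that the $G\times 1$-fixed points form the trivial rank-one $H$-lattice (via $\widehat{G}=\widehat{H}$), the appeal to Weiss's rigidity theorem to conclude that $\mathbb{Z}_pG$ is a permutation $(G\times H)$-lattice, and the extraction of the conjugating unit from a point of the (necessarily regular on both sides) permutation basis are all correct, and you rightly isolate the rigidity theorem as the one deep ingredient that you are taking as a black box. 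Two small remarks. First, your parenthetical claim that $\varepsilon$ is the \emph{unique} ring homomorphism $\mathbb{Z}_pG\to\mathbb{Z}_p$ fails for $p=2$: the unit $-1\in\mathbb{Z}_2$ is a $2$-power root of unity congruent to $1$ modulo $2$, so for instance $\mathbb{Z}_2C_2\to\mathbb{Z}_2$, $g\mapsto -1$, is a second ring homomorphism. This causes no damage, because the definition of a group basis already places $H$ inside $V(\mathbb{Z}_pG)$, which is all your argument uses; but the uniqueness claim should be dropped or restricted to odd $p$. Second, the version of Weiss's theorem you quote (restriction to the normal subgroup merely a permutation lattice) is stronger than needed: in your application $M|_{G\times 1}$ is free, so the original form of the theorem suffices.
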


This result showed that for nilpotent groups actually a stronger statement than $(\mathbb{Z}$-IP) is true, which was known at the time as the second Zassenhaus Conjecture. It asked, if any group basis of $V(\mathbb{Z}G)$ is conjugate inside $V(\mathbb{Q}G)$ to $G$. After the result of Roggenkamp and Scott several more classes of groups were shown to satisfy ($\mathbb{Z}$-IP), cf.\ e.g.\ \cite{RoggenkampTaylor}, but also the first counterexamples to Zassenhaus' conjecture appeared soon \cite{Klingler91}. Finally Hertweck was able to solve ($\mathbb{Z}$-IP) in general:

\begin{theorem}\cite{Hertweck01}
There are non-isomorphic groups $G$ and $H$ of order $2^{21}\cdot 97^{28}$ such that $\mathbb{Z}G \cong \mathbb{Z}H$.
\end{theorem}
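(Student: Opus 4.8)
The plan is to realize the counterexample through the mechanism linking the Isomorphism Problem to class-preserving automorphisms and the normalizer problem, adapting Mazur's idea from infinite to finite groups. Recall that an automorphism $\sigma$ of a finite group $X$ is \emph{class-preserving} if $\sigma(x)$ is conjugate to $x$ in $X$ for every $x\in X$. Such a $\sigma$ fixes every conjugacy class sum, hence acts trivially on the center of $\mathbb{Q}X$ and preserves each simple component; by Skolem--Noether it is therefore induced by conjugation with a unit $u$ of $\mathbb{Q}X$. The decisive observation is that if $\sigma$ is \emph{not} inner but the conjugating unit $u$ can be taken in $\mathbb{Z}X$, then $u$ normalizes $X$ inside $U(\mathbb{Z}X)$ without lying in $X\cdot Z(U(\mathbb{Z}X))$, and this integral normalizing unit can be amplified into genuinely non-isomorphic groups with isomorphic integral group rings. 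So the first step is to hunt for a finite group $X$ carrying a non-inner class-preserving automorphism that is realized by an integral (not merely rational) unit.

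Granting such a triple $(X,\sigma,u)$ with $\sigma$ of $2$-power order $n$, the second step is the amplification, which I would model on the following construction. Form $G=X\rtimes\langle c\rangle$ with $c$ of order $n$ acting as $\sigma$, and the ``untwisted'' companion $H=X\times\langle c\rangle$. Since $c^{-1}xc=\sigma(x)=u^{-1}xu$, the element $\hat c:=cu^{-1}\in\mathbb{Z}G$ centralizes $X$; if $u$ satisfies the norm-type cocycle condition $u\,\sigma(u)\cdots\sigma^{n-1}(u)=1$, forcing $\hat c^{\,n}=1$, then $\langle X,\hat c\rangle$ is a finite normalized subgroup of $V(\mathbb{Z}G)$ of order $|G|$, hence a group basis, and it is isomorphic to $X\times\langle c\rangle=H$. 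This gives $\mathbb{Z}H\cong\mathbb{Z}G$, while $G\not\cong H$ precisely because $\sigma$ is not inner, so the semidirect product does not collapse to the direct product. Note that by Theorem~\ref{th:RoggenkampScottWeiss} the two groups cannot be $p$-groups, consistent with the mixed order $2^{21}\cdot 97^{28}$; the actual construction is a symmetric refinement of this model, but the guiding arithmetic is the same.

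The hard part is the existence step, namely producing $X$ together with an \emph{integral} conjugating unit. Class-preserving automorphisms always arise from rational units, and integrality is a genuine obstruction, measured exactly by the normalizer problem, which at the time was widely expected to have a positive answer. I would therefore not insist on one global integral unit but control the situation prime by prime: it suffices to realize $\sigma$ by a conjugating unit in $\mathbb{Z}_{(\ell)}X$ for every prime $\ell$ (a Coleman-type, locally inner condition) and then assemble a genuine group basis by a local-to-global argument on the unit group. This is where the specific arithmetic enters: one constructs $X$ as a $97$-group whose relevant automorphism is inner on each Sylow subgroup, with the prime $97$ and the $2$-power order of $\sigma$ chosen so that the governing congruences hold and a Frobenius/normal-complement structure guarantees the local conjugating units exist.

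I expect the genuine obstacle to be twofold and intertwined: first, exhibiting a finite group whose locally inner automorphism is globally non-inner yet simultaneously integrally realizable at every prime — balancing these opposing demands forces the elaborate structure and the large order, the exponents $21$ and $28$ being dictated by the size needed to house an automorphism with all the required local-global properties; and second, verifying that the non-inner-ness genuinely survives the amplification, so that the resulting $G$ and $H$ are provably non-isomorphic even though their integral group rings coincide. Controlling the local realizations simultaneously and tracking the automorphism's non-triviality through the construction is the technical heart of the argument.
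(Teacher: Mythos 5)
Your outline of the mechanism agrees with the only thing the paper actually says about Hertweck's argument (the theorem is cited from \cite{Hertweck01}, not proved, in this survey): a unit $u\in V(\mathbb{Z}X)$ normalizing $X$ but inducing a non-inner automorphism $\sigma$ is the ``cornerstone,'' and it is amplified, via a semidirect product by a cyclic $2$-group acting as $\sigma$, into a second group basis. Your observation that Theorem~\ref{th:RoggenkampScottWeiss} forbids $p$-group examples, the cocycle condition you impose on $u$ so that $\hat c=cu^{-1}$ has the right order, and the remark that $\langle X,\hat c\rangle$ then centralizes the twist away are all the right shape for the amplification step.

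The gap is that the proposal contains no construction. The entire mathematical content of the theorem is the existence of the triple $(X,\sigma,u)$: a finite group with a class-preserving (hence rationally inner), Coleman-type (locally inner at every prime) yet globally non-inner automorphism that is realized by conjugation with a unit of $\mathbb{Z}X$ itself, normalized so that the norm condition holds. You correctly identify that class-preservation only yields a conjugating unit in $\mathbb{Q}X$ and that descending to $\mathbb{Z}X$ is the obstruction, but ``assemble a genuine group basis by a local-to-global argument on the unit group'' is exactly the step that cannot be waved through: local realizability at every prime does not in general yield a global integral conjugating unit (the obstruction is a genus/class-group phenomenon), and overcoming it is what dictates the elaborate $\{2,97\}$-structure and the exponents $21$ and $28$. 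Two further points are asserted rather than proved: first, $X\rtimes_\sigma\langle c\rangle\not\cong X\times\langle c\rangle$ does not follow ``precisely because $\sigma$ is not inner''--- a semidirect product by a non-inner automorphism can still be isomorphic to the direct product via an isomorphism not respecting the factors, and establishing non-isomorphism is a genuine part of Hertweck's work (his actual groups are a symmetric pair of twisted products rather than a twisted/untwisted pair, as you suspect); second, one must check that $\hat c$ is normalized and that $\langle X,\hat c\rangle$ is $\mathbb{Z}$-linearly independent of the correct rank before calling it a group basis. As it stands the proposal is an accurate description of the strategy the paper attributes to Hertweck, but not a proof of the statement.
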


It is worth mentioning that these groups were found in a purely theoretical way, not relying on any computer assistance. Though during the decades ($\mathbb{Z}$-IP) was intensively studied many techniques were developed to that end, these are mostly fruitless when one wants to obtain results on (MIP). We will demonstrate this in the next section by exhibiting some key differences between the two problems.

We also note that there is another very active field of algebra which can be viewed as related to the Isomorphism Problem: the study of character degrees, i.e. the understanding of the degrees of irreducible complex representations of $G$ and how the structure of $G$ is related to these degrees. As the degrees, with their multiplicities, is exactly the information provided by the complex group algebra, this can be viewed also as a question on the isomorphism type of $\mathbb{C}G$. There are too many results to give an overview here, but we mention a view. Two classical theorems connecting the structure of $\mathbb{C}G$ and normal subgroups related to Sylow subgroups are the It\^{o}--Michler theorem which states that $G$ has no complex irreducible character of degree divisible by $p$ if and only if $G$ has a normal and abelian Sylow $p$-subgroup \cite{Ito51, Michler86} and the dual theorem of Thompson that when the degree of every non-linear character is divisible by $p$, then $G$ has a normal $p$-complement \cite{Thompson70}. Here $p$ denotes any prime and we should mention that the proofs of the It\^{o}--Michler theorem requires the Classification of Finite Simple Groups. Many variations of these theorems have been studied, we refer to a survey \cite{Navarro16} or a book \cite{NavarroMcKay} by Navarro which describe many of the questions in the area or e.g.\ to \cite{GiannelliRizoSchaefferFry} for a newer result. A question highlighted in \cite[Chapter 7]{NavarroMcKay} is whether the solvability of $G$ can be determined from $\mathbb{C}G$. 
There are also questions in the area directly related to the Isomorphism Problem, e.g the question if a finite simple group is determined by its complex group algebra, which was known as Huppert's conjecture and has been proven in a series of papers by Tong-Viet, cf.\  \cite{TongVietHuppertClassical} for the last step. 

\subsection{Comparing the integral and modular problem}

Some strong restrictions on the group bases of $\mathbb{Z}G$ are provided by the structure of the unit group of $\mathbb{Z}G$ and especially properties of its finite subgroups. The first one was observed by Higman:

\begin{theorem}\cite{Higman40}
If $G$ is abelian and $u \in V(\mathbb{Z}G)$ has finite order, then $u = g$ for some $g \in G$. In particular, $G$ is the only group basis of $\mathbb{Z}G$.
\end{theorem}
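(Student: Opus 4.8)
The plan is to exploit the representation theory of the finite abelian group $G$ over $\mathbb{C}$, where everything becomes one-dimensional. Since $G$ is abelian its irreducible complex representations are exactly the $m = |G|$ linear characters $\chi_1, \dots, \chi_m \colon G \to \mathbb{C}^\times$, and these assemble into an isomorphism of $\mathbb{C}$-algebras
\[
\Phi \colon \mathbb{C}G \xrightarrow{\ \sim\ } \mathbb{C}^m, \qquad x \mapsto (\chi_1(x), \dots, \chi_m(x)),
\]
each $\chi_i$ being extended $\mathbb{C}$-linearly to $\mathbb{C}G$. I would first record that a torsion unit behaves well under $\Phi$: if $u \in V(\mathbb{Z}G)$ satisfies $u^n = 1$, then $\chi_i(u)^n = \chi_i(u^n) = 1$, so every coordinate $\chi_i(u)$ is an $n$-th root of unity and in particular $|\chi_i(u)| = 1$.

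The decisive step is to compare $u = \sum_g a_g g$ (with $a_g \in \mathbb{Z}$) to its image under the canonical involution $u^\ast = \sum_{g} a_g g^{-1}$. Because each $\chi_i(g)$ lies on the unit circle we have $\chi_i(g^{-1}) = \overline{\chi_i(g)}$, and since the coefficients $a_g$ are real this yields $\chi_i(u^\ast) = \overline{\chi_i(u)}$. Hence $\chi_i(u u^\ast) = |\chi_i(u)|^2 = 1$ for every $i$, so $\Phi(u u^\ast) = (1, \dots, 1) = \Phi(1)$. As $\Phi$ is injective this forces the identity $u u^\ast = 1$ already in $\mathbb{Z}G$. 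Reading off the coefficient of the identity element on both sides, and noting that in the product $u u^\ast = \sum_{g,h} a_g a_h\, g h^{-1}$ this coefficient collects exactly the terms with $g = h$, I obtain the key numerical constraint $\sum_{g} a_g^2 = 1$.

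From here the conclusion is purely arithmetic: the $a_g$ are integers with $\sum_g a_g^2 = 1$, so exactly one of them equals $\pm 1$ and all others vanish; combined with the normalization $\varepsilon(u) = \sum_g a_g = 1$, the surviving coefficient must be $+1$, whence $u = g$ for a single $g \in G$. For the final assertion, any group basis $H$ of $\mathbb{Z}G$ is a subgroup of $V(\mathbb{Z}G)$ of cardinality $\operatorname{rk}_\mathbb{Z}(\mathbb{Z}G) = |G|$, hence finite, so each of its elements is a torsion unit and therefore lies in $G$ by the first part; since $|H| = |G|$ this gives $H = G$. I expect the only genuinely delicate point to be the justification of $u u^\ast = 1$: one must check that the involution interacts with the characters precisely as complex conjugation does, which relies on all \emph{group} elements having finite order so that $\chi_i(g^{-1}) = \overline{\chi_i(g)}$. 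Granting this, the remainder is the elementary observation that $1$ admits very few representations as a sum of integer squares.
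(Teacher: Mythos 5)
Your proof is correct. The paper itself gives no proof of this statement --- it is quoted from Higman's 1940 paper purely as background for contrasting $(\mathbb{Z}$-IP) with (MIP) --- so there is no in-text argument to compare against; but what you have written is the classical route. Diagonalizing $\mathbb{C}G$ via the $|G|$ linear characters, noting that a torsion unit has all character values on the unit circle, deducing $uu^{\ast}=1$ and reading off $\sum_{g} a_g^2 = 1$ from the coefficient of the identity is the standard commutative instance of the Berman--Higman trace argument. The only hypothesis you use implicitly is finiteness of $G$, which the paper's standing conventions supply; granting that, each step --- injectivity of $\Phi$, the identity $\chi_i(g^{-1}) = \overline{\chi_i(g)}$ for group elements of finite order, the arithmetic of $\sum_g a_g^2 = 1$ combined with $\varepsilon(u)=1$, and the cardinality count $|H|=|G|$ for the final assertion about group bases --- is sound and complete.
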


This was generalized by Berman:

\begin{theorem}\cite{Berman55Torsion}
If $u \in V(\mathbb{Z}G)$ is of finite order and central in $\mathbb{Z}G$, then $u = g$ for some $g \in G$.
\end{theorem}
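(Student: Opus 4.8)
The plan is to reduce the statement to the \emph{Berman--Higman} property that a normalized torsion unit $w \neq 1$ has vanishing coefficient at the identity. Write $u = \sum_{g \in G} u_g g$ with each $u_g \in \mathbb{Z}$ and $\sum_g u_g = \varepsilon(u) = 1$. For a fixed $h \in G$, consider $h^{-1}u$. Since $u$ is central in $\mathbb{Z}G$ it commutes with $h$, so $h^{-1}$ and $u$ generate an abelian subgroup of $U(\mathbb{Z}G)$ consisting of torsion elements; hence $h^{-1}u$ is again of finite order, and it is normalized since $\varepsilon(h^{-1}u)=1$. Its coefficient at the identity is exactly $u_h$. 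Thus, once I know that a normalized torsion unit which is not $1$ has identity-coefficient $0$, I obtain $u_h = 0$ for every $h$ with $h^{-1}u \neq 1$. As the coefficients of $u$ sum to $1$, they cannot all vanish, so $h^{-1}u = 1$ for some $h$, i.e.\ $u = h \in G$ (and this $h$ is then automatically central in $G$).

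It remains to establish the Berman--Higman ingredient. First I would pass to $\mathbb{C}G$ and use the left regular representation $\rho\colon \mathbb{C}G \to \operatorname{End}_{\mathbb{C}}(\mathbb{C}G)$. Counting the basis vectors $g \in G$ fixed by left multiplication shows $\operatorname{tr}\rho(g) = |G|$ for $g=1$ and $0$ otherwise, whence $\operatorname{tr}\rho(w) = w_1 |G|$ for any $w = \sum_g w_g g$. Applying this to a normalized torsion unit $w$ of order $n$: since $w^n = 1$, the operator $\rho(w)$ is diagonalizable with eigenvalues that are $n$-th roots of unity, so $\operatorname{tr}\rho(w)$ is a sum of $|G|$ complex numbers of modulus $1$. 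Comparing the two expressions gives $|w_1|\,|G| \le |G|$, hence $|w_1| \le 1$, so the integer $w_1$ lies in $\{-1,0,1\}$.

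The only delicate point, and the step I expect to be the real obstacle, is ruling out $|w_1| = 1$ when $w \neq 1$. Here $|w_1|\,|G| = |G|$ means equality in the triangle inequality for the sum of the unit-modulus eigenvalues, which forces all eigenvalues to coincide with a single root of unity $\zeta$; thus $\rho(w) = \zeta\,\mathrm{Id}$. Since $\rho$ is faithful and $\rho(w)=\zeta\,\mathrm{Id}$ gives $w = \zeta \cdot 1_{\mathbb{C}G}$, the condition $w \in \mathbb{Z}G$ forces $\zeta = \pm 1$, and $\varepsilon(w)=1$ then yields $w = 1$. Hence for $w \neq 1$ we have $w_1 = 0$, completing the reduction. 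The arithmetic of sums of roots of unity and the faithfulness argument are elementary once set up; the one structural input that genuinely cannot be dropped is the centrality of $u$, used above to guarantee that each $h^{-1}u$ is torsion.
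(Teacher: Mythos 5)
Your argument is correct and complete: the reduction of the central case to the Berman--Higman lemma via the units $h^{-1}u$ (torsion because $u$ is central), together with the trace computation in the regular representation and the equality case of the triangle inequality, is exactly the classical proof of Berman's theorem. The survey only cites this result from \cite{Berman55Torsion} without including a proof, so there is nothing to compare against beyond noting that yours is the standard argument found in the textbook literature on integral group rings.
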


Moreover, the order of finite subgroups in $V(\mathbb{Z}G)$ is very much restricted as the order of subgroups in $G$:

\begin{theorem}\cite{ZK}
Let $U$ be a finite subgroup of $V(\mathbb{Z}G)$. Then the order of $U$ divides the order of $G$. Moreover, the elements of $U$ are linearly independent over $\mathbb{Z}$.
\end{theorem}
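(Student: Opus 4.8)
The plan is to reduce both assertions to a single lemma of Berman--Higman type: if $u \in V(\mathbb{Z}G)$ has finite order and $u \neq 1$, then the coefficient of the identity $1 \in G$ in $u$ vanishes. Write $\tau \colon \mathbb{C}G \to \mathbb{C}$ for the $\mathbb{C}$-linear map extracting this identity coefficient, $\tau(\sum_{g} a_g g) = a_1$. Since $\tau(gh) = 1$ exactly when $gh = 1$, equivalently when $hg = 1$, the map $\tau$ is a symmetric trace form: $\tau(xy) = \tau(yx)$ for all $x,y \in \mathbb{C}G$. Granting the lemma, everything else is linear algebra.

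To prove the lemma I would pass to the left regular representation of $\mathbb{C}G$ on itself, whose character $T$ satisfies $T(x) = |G| \cdot \tau(x)$, because left multiplication by $g \neq 1$ is a fixed-point-free permutation of the basis $G$. As $u$ has finite order, $u$ is diagonalizable over $\mathbb{C}$ with eigenvalues that are roots of unity, so $T(u)$ is a sum of $|G|$ roots of unity and hence $|T(u)| = |G| \cdot |\tau(u)| \le |G|$; since $\tau(u) \in \mathbb{Z}$ this forces $\tau(u) \in \{-1,0,1\}$. If $\tau(u) \ne 0$ then $|T(u)| = |G|$, which can occur only when all $|G|$ eigenvalues coincide, i.e.\ $u$ acts as a single scalar $\zeta$ on $\mathbb{C}G$. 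But the regular representation contains the trivial subrepresentation spanned by $\sum_{g \in G} g$, on which $u$ acts by the scalar $\varepsilon(u) = 1$; hence $\zeta = 1$, and the regular representation being faithful forces $u = 1$. This step --- extracting $u = 1$ from equality in the triangle inequality together with normalization --- is the crux of the whole argument.

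For the linear independence, let $U = \{u_1, \dots, u_m\}$ and consider the $m \times m$ matrix with entries $\tau(u_i u_j^{-1})$. Each $u_i u_j^{-1}$ lies in $U$ and so is a torsion normalized unit, whence by the lemma $\tau(u_i u_j^{-1}) = 1$ if $i = j$ and $0$ otherwise; the matrix is the identity. Thus if $\sum_i c_i u_i = 0$ with $c_i \in \mathbb{C}$, applying the functional $x \mapsto \tau(x\, u_j^{-1})$ yields $c_j = 0$ for every $j$. In particular the $u_i$ are $\mathbb{Z}$-linearly independent.

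Finally, for the divisibility $|U| \mid |G|$, set $\widehat{U} = \sum_{u \in U} u \in \mathbb{Z}G$. Since left multiplication by any $u \in U$ permutes $U$, one has $\widehat{U}^{\,2} = |U|\,\widehat{U}$, so $e = |U|^{-1}\widehat{U}$ is an idempotent in $\mathbb{Q}G$. By the lemma the identity coefficient of $\widehat{U}$ equals $1$, as only the summand $u = 1$ contributes, so $T(\widehat{U}) = |G|\,\tau(\widehat{U}) = |G|$ and therefore $T(e) = |G|/|U|$. On the other hand $T(e)$ is the trace of an idempotent endomorphism of the $\mathbb{Q}$-vector space $\mathbb{Q}G$, hence equals the rank of $e$, a non-negative integer. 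Comparing the two expressions gives $|U| \mid |G|$, completing the plan.
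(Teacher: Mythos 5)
Your proof is correct. The paper only cites this theorem (attributed to \cite{ZK}) without proof, and your argument is precisely the classical one behind that reference: the Berman--Higman trace lemma via the regular character and the equality case of the triangle inequality, the orthogonality matrix $\bigl(\tau(u_iu_j^{-1})\bigr)$ for linear independence, and the rank of the idempotent $|U|^{-1}\widehat{U}$ for the divisibility.
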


To understand the grave differences of methods between ($\mathbb{Z}$-IP) and (MIP) it is instructive to study the first non-trivial example:

\begin{example}\label{ex:kC3} 
Let $\mathbb{F}_3 \cong \{0,1,-1\} = k$ be the field with $3$ elements and $G = \langle g \rangle$ the cyclic group of order $3$. Then $kG$ has $3^3 = 27$ elements of which $9$ have augmentation $1$. It is not hard to check that each element of augmentation $1$ is a unit of order $3$ using the identity $(x+y)^p = x^p + y^p$ which holds in any commutative algebra over a field of characteristic $p$. E.g.\
\[ (1 + g - g^2)^3 = 1^3 + g^3 + (-g^2)^3 = 1.\]
As $G$ is the only group of order $3$ up to isomorphism, (MIP) has a positive solution for $G$, but this does not prevent $kG$ from having various group bases. We have $|V(kG)| = 9$, so $V(kG)$ is an elementary abelian group containing $4$ subgroups of order $3$. Three of those are group bases, namely
\[\{1, g, g^2 \},  \ \{1, -1-g, 1-g+g^2 \}, \  \{1, -1-g^2, 1+g -g^2 \}, \]
while $ \{1, -g-g^2, -1 + g + g^2 \}$ is not.
\end{example}

This example shows that none of the three basic facts mentioned before which hold in $V(\mathbb{Z}G)$ are true in the situation of (MIP): the order of the finite group $V(kC_3)$ does not divide $3$ and its elements are not linearly independent, not even for those subgroups whose order does divide the order of $G$. Neither does every central unit in $V(kG)$ lie in the group basis generated by a fixed element of order $3$.

When one views the Isomorphism Problem from the perspective of group bases it is important to understand how a group basis embeds in the normalized unit group of a group ring and how the units act on it. In the integral case this was a problem open for a long time which can be stated as:

\begin{quote}\textbf{Normalizer Problem:} 
In $V(\mathbb{Z}G)$ the normalizer of the subgroup $G$ is generated by $G$ and the center of $V(\mathbb{Z}G)$.
\end{quote}

A counterexample to the Normalizer Problem was the cornerstone of Hertweck's construction of a counterexample to ($\mathbb{Z}$-IP) using a particular unit in $V(\mathbb{Z}G)$ violating the condition of the Normalizer Problem to construct a non-isomorphic group basis. Nevertheless, the Normalizer Problem is still of interest, see \cite{VanAntwerpen}. A very general positive result on the problem was achieved by Jackowski and Marciniak:

\begin{theorem} \cite{JackowskiMarciniak87}
If the Sylow $2$-subgroup of $G$ is normal in $G$, then the Normalizer Problem has a positive answer for $G$. In particular, it has a positive answer for groups of odd order.	
\end{theorem}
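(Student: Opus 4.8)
The plan is to translate the Normalizer Problem into a statement about automorphisms and then to control those automorphisms one prime at a time. First I would observe that conjugation gives a homomorphism $N_{V(\mathbb{Z}G)}(G) \to \operatorname{Aut}(G)$, $u \mapsto \sigma_u$ with $\sigma_u(g) = u^{-1}gu$, whose kernel consists of the normalized units commuting with every element of $G$; since $G$ spans $\mathbb{Z}G$ over $\mathbb{Z}$, such a unit is central in $\mathbb{Z}G$, so the kernel is exactly $Z(V(\mathbb{Z}G))$. As $G$ itself maps onto $\operatorname{Inn}(G)$, the equality $N_{V(\mathbb{Z}G)}(G) = G\cdot Z(V(\mathbb{Z}G))$ claimed by the problem is equivalent to the assertion that every automorphism of $G$ induced by a normalized unit is inner. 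This reformulation is the real target.

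The key external input I would invoke is Coleman's lemma: for each prime $p$ and each $P \in \operatorname{Syl}_p(G)$, passing from $\mathbb{Z}G$ to $\mathbb{Z}_pG$ and applying Coleman's argument shows that the restriction of $\sigma_u$ to $P$ agrees with conjugation by an element of $G$. Thus every unit-induced automorphism is a \emph{Coleman automorphism}, meaning it is inner on every Sylow subgroup, and it suffices to prove that a Coleman automorphism $\sigma$ of a group $G$ with normal Sylow $2$-subgroup $P$ is inner. Applying the Coleman property at $p=2$ and modifying $\sigma$ by a suitable inner automorphism, I may then assume that $\sigma$ fixes $P$ pointwise.

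With this normalization the argument becomes cohomological. Since $P \trianglelefteq G$ and $\sigma$ is the identity on $P$, a short computation shows that $g^{-1}\sigma(g) \in C_G(P)$ for every $g$, so $d(g) = g^{-1}\sigma(g)$ defines a $1$-cocycle $d\colon G \to C_G(P)$, and $\sigma$ is inner precisely when the class of $d$ is trivial. Because $P$ centralises $C_G(P)$, the cocycle descends to a class in $H^1(\overline{G}, C_G(P))$, where $\overline{G} = G/P$ has \emph{odd} order. Writing $C_G(P) = Z(P) \times L$ with $Z(P)$ its (central) Sylow $2$-subgroup and $L$ of odd order, I would establish the vanishing of this class by filtering $C_G(P)$ through an $\overline{G}$-invariant normal series with elementary abelian factors and treating each layer according to its prime: a layer which is a $2$-group contributes to $H^1(\overline{G},-)$ a group annihilated both by the odd number $|\overline{G}|$ and by a power of $2$, hence vanishing by coprimality; a layer which is an $\ell$-group for odd $\ell$ is handled by restricting to a Sylow $\ell$-subgroup of $\overline{G}$, which is injective on the $\ell$-primary part of $H^1$, together with the Coleman property at $\ell$, which forces the restricted cocycle to be a coboundary.

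The main obstacle is exactly this final vanishing. The $2$-primary layers are eliminated cleanly by the coprimality of $|P|$ and $[G:P]$, but the odd layers require one both to convert the statement ``$\sigma$ is inner on the Sylow $\ell$-subgroup'' into ``the cocycle is a coboundary on that Sylow subgroup'' and to arrange the filtration so that the genuinely nonabelian coefficients $C_G(P)$ are reduced, layer by layer, to abelian ones on which corestriction–restriction applies; organising this reduction is where the real work lies. It is also precisely the mechanism that yields the stated corollary for groups of odd order, where $P = 1$, the normalization step is vacuous, and the entire argument collapses to the odd-prime analysis with $C_G(P) = G$.
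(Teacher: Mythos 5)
The survey states this theorem with a citation only, so there is no in-paper proof to measure you against; the comparison has to be with the argument of Jackowski and Marciniak themselves, whose overall strategy you have correctly reconstructed. Your reduction is the right one and matches theirs: the kernel of $u\mapsto\sigma_u$ on $N_{V(\mathbb{Z}G)}(G)$ is the centre, Coleman's lemma (Theorem~\ref{th:ColemanLemma} of the paper, applied over $\mathbb{Z}$ after replacing $u$ by $gu$ so that it normalizes a chosen Sylow subgroup) shows that every unit-induced automorphism is inner on each Sylow subgroup, and after adjusting by an inner automorphism one may assume $\sigma$ fixes the normal Sylow $2$-subgroup $P$ pointwise, producing a crossed homomorphism $d(g)=g^{-1}\sigma(g)$ with values in $C_G(P)$ that factors through the odd-order quotient $G/P$. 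All of this is correct, including the splitting $C_G(P)=Z(P)\times L$ via Burnside and the coprimality argument that kills the $2$-primary layers.

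The gap is exactly where you locate it, and it is not merely bookkeeping. For an odd prime $\ell$ the Coleman property gives $\sigma|_{Q_\ell}=\mathrm{conj}(h_\ell)|_{Q_\ell}$ for some $h_\ell\in G$, so the restricted cocycle is principal with values in $G$, not visibly in $C_G(P)$ or in the abelian layer you are working in; converting that into vanishing of the $\ell$-primary part of the class, layer by layer, is an argument you do not supply, and it is unclear the filtration can be arranged so that it goes through. The ingredient that closes this in the original proof, and which is absent from your proposal, is Krempa's theorem that the image of $N_{V(\mathbb{Z}G)}(G)$ in $\operatorname{Out}(G)$ is an elementary abelian $2$-group (proved using the anti-involution $g\mapsto g^{-1}$ of $\mathbb{Z}G$). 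With it, the class of $d$ is killed by a power of $2$, while $H^1$ of the odd-order group $G/P$ with finite, layerwise abelian coefficients is killed by $|G/P|$; coprimality then annihilates the class outright, and no odd-prime Coleman analysis is needed at all. Without Krempa's theorem your odd layers --- which, as you yourself note, constitute the entire proof in the odd-order case --- remain open, so the proposal as written does not yet prove the statement.
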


In view of the central role of a counterexample to the Normalizer Problem in the construction of counterexamples to ($\mathbb{Z}$-IP) this shows that the following is a wide open problem:

\begin{problem}
Does ($\mathbb{Z}$-IP) hold for groups of odd order?
\end{problem}

In the situation of (MIP) on the other hand the situation with regard to the normalizer of a group basis in the group algebra is very clear. This is due to a result of Coleman \cite{Coleman64} which we cite in a more general form:

\begin{theorem}\label{th:ColemanLemma}\cite[2.1 Lemma]{RoggenkampTaylor}
Let $P$ be a $p$-subgroup of $G$ and $R$ an integral domain in which $p$ is not invertible. Then the normalizer of $P$ in $V(RG)$ is generated by the normalizer of $P$ in $G$ and the center of $V(RG)$.
\end{theorem}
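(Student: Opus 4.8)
The plan is to prove the precise content behind this statement, namely that
\[ N_{V(RG)}(P) = N_G(P)\cdot C_{V(RG)}(P), \]
where $C_{V(RG)}(P)$ is the centralizer of $P$ in $V(RG)$ (for $P=G$ this centralizer is exactly $Z(V(RG))$, which is the situation underlying the Normalizer Problem). Fix $u\in V(RG)$ with $u^{-1}Pu=P$. Conjugation by $u$ then induces an automorphism $\alpha$ of $P$, given by $\alpha(x)=u^{-1}xu$ for $x\in P$. The whole argument reduces to producing a single element $g\in N_G(P)$ that induces the \emph{same} automorphism $\alpha$ of $P$: once such a $g$ is found, a direct check shows $g^{-1}u$ centralizes $P$, and since $\varepsilon(g^{-1}u)=1$ we get $u=g\cdot(g^{-1}u)\in N_G(P)\cdot C_{V(RG)}(P)$, which is the desired decomposition (the reverse inclusion being clear).

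The mechanism for finding $g$ is a counting argument on the coefficients of $u$. Writing $u=\sum_{g\in G}u_g\,g$, the normalizing relation $xu=u\alpha(x)$ for $x\in P$ translates, upon comparing coefficients, into $u_{x^{-1}h}=u_{h\alpha(x)^{-1}}$ for all $h\in G$; substituting $h=xg$ yields the clean form $u_g=u_{x\cdot g}$, where $x\cdot g := xg\alpha(x)^{-1}$. One checks using that $\alpha$ is a homomorphism that this defines a (twisted) left action of $P$ on the set $G$, and the relation above says precisely that the coefficient function $g\mapsto u_g$ is constant on each $P$-orbit. Since $|P|$ is a power of $p$, every orbit has size a power of $p$, and an orbit is a singleton exactly when its representative $g$ satisfies $g^{-1}xg=\alpha(x)$ for all $x\in P$; as $\alpha$ is bijective this forces $g^{-1}Pg=P$, i.e. $g\in N_G(P)$ and $g$ induces $\alpha$. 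Thus the fixed points of the action are exactly the elements of $N_G(P)$ realizing $\alpha$, and it suffices to show at least one exists.

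To force a fixed point I would pass to the augmentation modulo the non-invertible prime. Grouping the sum $\varepsilon(u)=\sum_{g\in G}u_g$ by orbits and using the constancy of the coefficients on orbits, every orbit of size divisible by $p$ contributes an element of the ideal $pR$, so modulo $pR$ only the singleton orbits survive: $\varepsilon(u)\equiv \sum_{\text{fixed }g}u_g \pmod{pR}$. Now $\varepsilon(u)=1$, and since $p$ is not invertible in the integral domain $R$ we have $pR\neq R$, hence $1\notin pR$ and $\varepsilon(u)\not\equiv 0\pmod{pR}$. Therefore the sum over fixed points is nonempty, i.e. there is at least one $g\in N_G(P)$ inducing $\alpha$, completing the argument. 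The main obstacle here is exactly the one that distinguishes the modular setting from the integral one illustrated in Example~\ref{ex:kC3}: because $p$ is not invertible we cannot average over $P$ (no Reynolds/Maschke operator is available). The point of the proof is to turn this apparent defect into the decisive tool — it is precisely the non-invertibility of $p$ that makes the mod-$p$ orbit count detect a fixed point, so the obstruction and the solution come from the same source.
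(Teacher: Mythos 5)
Your proof is correct and is precisely the classical Coleman argument used in the cited source: the twisted $P$-action $x\cdot g = xg\alpha(x)^{-1}$ on $G$, constancy of the coefficients of $u$ on orbits, and reduction of the augmentation modulo $pR$ to force a fixed point, which then supplies the element $g\in N_G(P)$ realizing $\alpha$. You also state the result in its sharp form $N_{V(RG)}(P)=N_G(P)\,C_{V(RG)}(P)$ with the centralizer of $P$ rather than the center of $V(RG)$, which is what the argument actually yields (the two coincide when $P=G$, the case relevant to the Normalizer Problem).
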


For further facts on finite subgroups of units in $U(\mathbb{Z}G)$ see the survey \cite{MargolisDelRioSurvey}.

Overall, one can say that Sandling's words from \cite{SandlingSurvey} are still true: the perspectives to study ($\mathbb{Z}$-IP) are ``notable by their absence'' in the situation of (MIP). We will dive into the methods used to study (MIP) below, but first add some remarks why it is also of interest, even in weaker forms, to representation theorists.

\subsection{Connections to block theory}
In the representation theory of groups often not the whole group ring $RG$ is the focus of attention, but blocks, i.e. the indecomposable $2$-sided ideals of $RG$. Between blocks not only isomorphisms are of interest, but several other equivalence relations are of importance. These equivalences are usually weaker than isomorphism of rings, such as Morita equivalence or derived equivalence. As the group algebra of a $p$-group $G$ over a field $k$ of characteristic $p$ is indecomposable, the algebra $kG$ is itself a block and hence results on (MIP) can also be regarded from this point of view. Note that in this situation the isomorphism of group algebras is equivalent to Morita and derived equivalence, cf.\ \cite[Proposition 4.3.5]{ZimmermannBook} and \cite[Corollary 2.13]{RouqierZimmermann}.

In this sense the negative answers to (MIP) can also be used to solve questions on blocks. As a more general version of (MIP) Navarro and Sambale considered, whether Morita equivalent blocks necessarily have isomorphic defect groups \cite{NavarroSambale}. Some special positive answers were obtained, while the general question, which is attributed to Alperin in \cite{Bessenrodt90, Scott90}, remained open. The counterexamples to (MIP) also give a negative answer to this question. They also provide a negative answer to a question of Linckelmann, if an isomorphism of blocks in characteristic $p$ can always be lifted to an isomorphism of blocks in characteristic $0$ \cite[Question 4.9]{LinckelmannSurvey}, in view of Theorem~\ref{th:RoggenkampScottWeiss}. We refer to \cite{NavarroGreen, LinckelmannVolII} for text books on the topic.

\section{Early history and overview of result}\label{sec:History}

From now on we will concentrate on (MIP) and if not otherwise stated we assume $G$ to be a finite $p$-group and $k$ a field of characteristic $p$. Some of our remarks will apply to the more general situation of any coefficient ring and we will write $R$ for such rings.

We call a property of $G$ an \emph{invariant} of $kG$, if any other group basis of $kG$ has the same property, e.g.\ the order of $G$ is an invariant. Moreover when we speak of the \emph{class} of $G$, we always mean the nilpotency class. 

From the 1950's on various authors started comparing the isomorphism types of special series of groups or even single examples. The first published result was achieved by Deskins who showed that (MIP) holds for abelian groups \cite{Deskins56}. This was based on the work of Jennings on dimension subgroups which we explain in the next section.  It should be mentioned that also for the group algebras of infinite groups first results in the abelian case were achieved from the 1960's on, in particular Berman solved the problem for countable abelian $p$-groups \cite{Berman67}. Nevertheless, the general case remains open, cf.\ e.g.\ \cite{May14} for more recent results in the area.

Deskins also remarked that the dihedral and quaternion groups of order $8$ have isomorphic group algebras over the field $\mathbb{F}_2$. That this is erroneous was first remarked in \cite{Coleman64} and a more detailed proof was given in \cite{Holvoet66}\footnote{See Deskins' remark in the review of this article on MathSciNet.} by Holvoet. He also handled groups of order $16$ \cite{Holvoet68} and some groups of order $32$ \cite{Holvoet69}. By that time Passman had also proven a positive answer for groups of order $p^4$ for any prime $p$ \cite{Passmanp4}s. A few years later (MIP) was also solved for groups of order $32$ \cite{Makasikis}. Except for the result of Deskins all these answers were achieved over the field with $p$ elements.

In these early papers results were mostly achieved by combining some basic invariants with explicit calculations and an idea to compute numerical invariants under certain maps, which we elaborate on in Section~\ref{sec:maps}. (MIP) was included as a ``widely known'' conjecture in \cite{ZalesskiiMihalev73}, as ``quite possible'' in \cite{PassmanSurvey74}, it was discussed in \cite{BovdiBook74} and was given as Problem 16 in \cite{Sehgal78}. Another approach was taken by Passi and Sehgal who showed how under certain circumstances the problem can be simplified to a problem on a smaller quotient of $kG$ \cite{PassiSehgal72} and this turned out to be a very fruitful idea, cf.\ Section~\ref{sec:quotients}. The price one has to pay though when going down this road is a restriction on the ground field.

Some other ideas were also tried which we shortly sketch in Section~\ref{sec:further}. Nevertheless, the positive results remained rather chaotic and not as strong as in the case of the ($\mathbb{Z}$-IP).
We give an overview over the classes for which the problem has been solved. Here we denote by $D_m(G)$ the $m$-th dimension subgroup of $G$ which is introduced in Section~\ref{sec:DimensionSubgroups}

(MIP) has a positive solution with respect to any field for the following groups:
\begin{itemize}
\item[(i)] Abelian groups \cite{Deskins56},
\item[(ii)] $2$-groups of maximal class \cite{Carlson77} (a module-theoretic proof) / \cite{Baginski92} (a proof in the group algebra),
\item[(iii)] Groups with center of index $p^2$ \cite{Drensky89},
\item[(iv)] Metacyclic groups \cite{BaginskiMetacyclic, SandlingMetacyclic}\footnote{The result is stated only over $\mathbb{F}_p$, but Sandling's arguments work over any field.},
\item[(v)] Groups of order $32$ \cite[Lemma 3.7]{NavarroSambale},
\item[(vi)] $2$-generated groups of class $2$ for $p$ odd \cite{BrochedelRio21}\footnote{The result is stated only over $\mathbb{F}_p$, but the arguments applied in the case of odd $p$ work for any field.}
\item[(vii)] $2$-groups of nilpotency class $3$ such that $[G:Z(G)] = |\Phi(G)| = 8$ \cite{MargolisSakuraiStanojkovski21},
\item[(viii)] $2$-groups with cyclic center such that $G/Z(G)$ is dihedral \cite{MargolisSakuraiStanojkovski21}.
\end{itemize}

Moreover, (MIP) has a positive answer with respect to the prime field $\mathbb{F}_p$ for the following groups:
\begin{itemize}
\item[(ix)] Groups of order at most $p^5$ \cite{Passmanp4, SalimSandlingp5} or $64$ \cite{HertweckSoriano06},
\item[(x)] Groups with $D_3(G) = 1$ \cite{PassiSehgal72} for any $p$ or $D_4(G) = 1$ for $p$ odd \cite{Hertweck07},
\item[(xi)] Groups of maximal class and order at most $p^{p+1}$ which contain a maximal abelian subgroup \cite{BaginskiCaranti88} and most $3$-groups of maximal class \cite{BaginskiKurdics},
\item[(xii)] Groups containing a cyclic subgroup of index $p^2$ \cite{BaginskiKonovalov07},
\item[(xiii)] Groups of class $2$ if additionally $G$ is $2$-generated \cite{BrochedelRio21} or $G'$ is of exponent $p$ \cite{Sandling89},
\item[(xiv)] Groups of class $3$ such that $G'$ has exponent $p$ and additionally either $G$ is $2$-generated \cite{MargolisMoede} or $C_G(G')$ is abelian and a maximal subgroup of $G$ \cite{MargolisStanojkovski21}.
\end{itemize}

Some of these results can be formulated in a more general, but more technical way, which we omit here.

Furthermore, it has been verified by computer calculations that (MIP) has a positive answer with respect to the prime field for groups of order at most $2^8$ or $3^7$ as well as most groups of order $5^6$ \cite{Wursthorn93, BleherKimmerleRoggenkampWursthorn99, Eick08, MargolisMoede}. We include more details on these computer based results in Section~\ref{sec:computers}.

For certain of these results we will see the ideas of the proofs below. Some are just applications of known invariants. We do not include the long list of all the invariants which appear in the literature here, but the interested reader may consult \cite{MargolisMoede}. Most of these invariants are mentioned throughout the text below.

\section{Dimension subgroups}\label{sec:DimensionSubgroups}

The most fundamental tool to study modular group algebras of $p$-groups has its roots in the work of Jennings \cite{Jennings41}. To describe it we first define an ideal that is fundamental for any group ring. Recall that the augmentation of $RG$ is a map sending an element of $RG$ to the sum of its coefficients. The kernel of this map, i.e.\ the elements of augmentation $0$ in $RG$, is called the \emph{augmentation ideal} and denoted by $I(RG)$. Note that in the situation of (MIP) the augmentation ideal is exactly the radical of the group algebra, i.e.\ the collection of elements in $kG$ which annihilate all the simple $kG$-modules. This in turn follows from the fact that the trivial module, i.e.\ the set $k$ on which $G$ acts as the identity, is the only simple $kG$-module. The proof is similar to the fact that the center of a finite $p$-group is non-trivial: if we take $M$ to be a simple $kG$-module and $m$ some non-zero element in $M$, then the $G$-orbit of $m$ spans an additive finite $p$-subgroup in $M$. As the $G$-orbits have $p$-power order and $0$ is a fixed point of the action, there is at least one more fixed point and this shows that the trivial module is a submodule of $M$. The fact that $I(kG)$ is exactly the radical also implies that it is a nilpotent ideal, something which could also be checked by direct calculations.

For any group ring $RG$ and integer $m$ the $m$-th \emph{dimension subgroup} $D_m(G)$ of $G$ with respect to $R$ is the intersection of $G$ and the $m$-th power of the augmentation ideal of $RG$ added to $1$, i.e.\
\[D_m(G) = G \cap (1+I(RG)^m). \]
The dimension subgroups with respect to the integers are rather mysterious objects and the conjecture that in this case the dimension subgroup series coincides with the lower central series of $G$ was a main conjecture in the field in the 1960's and early 70's after this had been shown to be true for free groups by Magnus \cite{Magnus35}. Though this was disproved in general by Rips \cite{Rips72}, the difference between the dimension subgroup series and the lower central series is still not generally understood and new interesting results appeared rather recently \cite{BartholdiMikhailov} (this also contains an overview of other results on the problem).

The vague understanding of dimension subgroups with respect to the integers is again in stark contrast to the situation for the modular group algebra $kG$ of a $p$-group $G$. In this situation Jennings provided a purely group-theoretical description of dimension subgroups which also gives a very handy basis for the group algebra $kG$ \cite{Jennings41}.

Call a series of normal subgroups
\[G = M_1(G) \geq M_2(G) \geq ... \geq M_n(G) = 1 \]
a \emph{$p$-restricted $N$-series}, if for any $i$ and $j$ it satisfies
\[[M_i(G), M_j(G)] \subseteq M_{i+j}(G) \]
and
\[M_i(G)^p \subseteq M_{ip}(G). \]
Note that even when $M_i(G) \neq 1$ the equality $M_i(G) = M_{i+1}(G)$ might hold for some indices $i$. Then we have:

\begin{theorem}\cite{Jennings41, Lazard54} 
For a finite $p$-group $G$ and a field $k$ of characteristic $p$ the dimension subgroup series of $G$ with respect to $k$ is the shortest $p$-restricted $N$-series of $G$. Moreover, one has the inductive formulas 
\[D_m(G) = D_{\ceil{\frac{m}{p}}}(G)^p \gamma_m(G) = \begin{cases}
G & \textup{ if } m=1, \\
[G,D_{m-1}(G)]D_{\ceil{\frac{m}{p}}}(G)^p & \textup{ if } m\geq 2.
\end{cases}\]
and the closed expression
\[D_m(G) = \prod_{ip^j \geq m} \gamma_i(G)^{p^j} . \]
\end{theorem}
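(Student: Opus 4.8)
Write $I = I(kG)$ and abbreviate $D_m = D_m(G)$, $\gamma_i = \gamma_i(G)$. The plan is to prove three statements in turn: that the dimension subgroup series $(D_m)_{m\geq 1}$ is a $p$-restricted $N$-series; that the explicit product $J_m := \prod_{ip^j\geq m}\gamma_i^{p^j}$ defines the \emph{smallest} $p$-restricted $N$-series; and that $D_m = J_m$. The last equality simultaneously identifies $(D_m)$ with the closed-form series, shows it is the smallest (hence shortest) $p$-restricted $N$-series, and yields the inductive formulas.

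First I would check directly that $(D_m)$ satisfies the two closure conditions. For the commutator relation, take $g \in D_i$ and $h \in D_j$, so $g-1 \in I^i$ and $h-1 \in I^j$. From $(g-1)(h-1) - (h-1)(g-1) = gh - hg \in I^{i+j}$ together with $[g,h]-1 = g^{-1}h^{-1}(gh-hg)$ and $g^{-1}h^{-1} \in kG$, one gets $[g,h]-1 \in I^{i+j}$, i.e. $[D_i,D_j]\subseteq D_{i+j}$. For the power relation, characteristic $p$ and the commutativity of $g$ with $1$ give $g^p - 1 = (g-1)^p \in I^{ip}$ whenever $g-1\in I^i$, so $D_i^p \subseteq D_{ip}$. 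This step is routine.

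Next I would argue, purely inside $G$, that $(J_m)$ is a $p$-restricted $N$-series and is contained in every such series. The two closure conditions for $(J_m)$ are a commutator-calculus computation, the interaction of $p$-th powers and commutators of the generating subgroups $\gamma_i^{p^j}$ being controlled by Hall--Petrescu-type identities. Minimality is an easy induction: in any $p$-restricted $N$-series $(N_m)$ one has $\gamma_i \subseteq N_i$ (since $N_1 = G$ and $[N_1,N_{i-1}]\subseteq N_i$), hence $\gamma_i^{p^j}\subseteq N_{ip^j}\subseteq N_m$ whenever $ip^j\geq m$, so $J_m \subseteq N_m$. Taking $N_m = D_m$ (legitimate by the previous step) already yields $J_m \subseteq D_m$, and rearranging the index set of the product $\prod_{ip^j\geq m}\gamma_i^{p^j}$ produces the two inductive descriptions, hence of $D_m$ once equality is established.

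The reverse inclusion $D_m \subseteq J_m$ is the real content and the step I expect to be hardest. Here I would build the \emph{Jennings basis}: choose $x_1,\dots,x_d \in G$ whose images run through bases of the elementary abelian factors $J_m/J_{m+1}$, assign weight $w(x_i)=m$ when $x_i\in J_m\setminus J_{m+1}$, and set $u_i = x_i - 1 \in I^{w(x_i)}$. The goal is to show that the ordered restricted monomials $\prod_i u_i^{a_i}$ with $0\leq a_i < p$ form a $k$-basis of $kG$, and crucially that those of total weight $\sum_i a_i w(x_i)\geq m$ form a basis of $I^m$. Granting this, if $g\in D_m$ then writing $g$ in the group using the $x_i$ shows that $g-1$ has a nonzero monomial of weight exactly the $J$-weight of $g$; membership $g-1\in I^m$ then forces that weight to be $\geq m$, i.e. $g\in J_m$. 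The weight-equals-filtration-degree claim is proved by passing to the associated graded algebra $\operatorname{gr}(kG)=\bigoplus_m I^m/I^{m+1}$ and identifying it with the restricted enveloping algebra of the graded restricted Lie algebra $\bigoplus_m J_m/J_{m+1}$; a PBW-type basis theorem for restricted enveloping algebras then delivers exactly the required linear independence and dimension count. Controlling this comparison, i.e. ruling out any collapse of the filtration so that the monomials remain independent in each graded piece rather than degenerating into lower degree, is the crux of the whole argument.
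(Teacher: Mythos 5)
The paper does not actually prove this theorem: it is stated as a classical result with citations to Jennings and Lazard, and the reader is referred to textbook treatments (Huppert, Dixon--du Sautoy--Mann--Segal, Benson) for detailed proofs. Your outline is, in substance, the standard proof found in those references, and it is correct as a sketch. The easy verifications are right: $gh-hg=(g-1)(h-1)-(h-1)(g-1)$ and $[g,h]-1=g^{-1}h^{-1}(gh-hg)$ give $[D_i,D_j]\subseteq D_{i+j}$, and $(g-1)^p=g^p-1$ gives $D_i^p\subseteq D_{ip}$; the minimality argument $\gamma_i^{p^j}\subseteq N_{ip^j}$ for any $p$-restricted $N$-series $(N_m)$ is also correct and immediately yields $J_m\subseteq D_m$ together with the two inductive descriptions (modulo the routine index bookkeeping you wave at). You correctly isolate the two places where real work is hidden: showing that $(J_m)$ is itself a $p$-restricted $N$-series requires Hall--Petrescu-type identities, and the reverse inclusion $D_m\subseteq J_m$ rests on the Jennings basis, i.e.\ on identifying $\operatorname{gr}(kG)=\bigoplus_m I^m/I^{m+1}$ with the restricted enveloping algebra of $\bigoplus_m J_m/J_{m+1}$ and invoking the restricted PBW theorem plus the dimension count $\prod_m |J_m/J_{m+1}|=|G|=\dim_k kG$ to rule out collapse of the filtration. (This is Quillen's streamlined route; Jennings' original argument is a more hands-on induction on weights, but the two are equivalent in content.) As a fully written proof your text would need those two ingredients supplied, but as an outline it names the right tools at the right places and contains no wrong turn.
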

This series of subgroups was also constructed, using other ideals, by Zassenhaus and for this reason is sometimes called the \emph{Zassenhaus series} or \emph{Jennings series} or \emph{Brauer--Jennings--Zassenhaus series}.

The dimension subgroup series now allows to exhibit an especially well-behaved basis of $kG$ which respects quotients by powers of the augmentation ideal and thus facilitates calculations. To explain the construction of this basis, note first that by Jennings' theorem for any index $i$ the quotient $D_i(G)/D_{i+1}(G)$ is an elementary abelian group and so can be viewed as an $\mathbb{F}_p$-vector space. Let $g_1,...,g_\ell$ be elements of $D_i(G)$ such that the cosets $g_1D_{i+1}(G),...,g_\ell D_{i+1}(G)$ form a basis of the vector space $D_i(G)/D_{i+1}(G)$. Note that from the definition of dimension subgroups we have that the set $\{g-1 \ | \ g \in D_{i+1}(G) \}$ lies in $I(kG)^{i+1}$. It can then be shown that $g_1-1,....,g_\ell-1$ is a linearly independent set in $I(kG)^i/I(kG)^{i+1}$ and even a basis in case $i=1$. But when we vary the index $i$ we actually obtain much more:

\begin{theorem}\label{th:Jennings} \cite{Jennings41}
Assume $n$ is an integer such that $D_n(G) = 1$. Let $g_1,...,g_\ell$ be the union of the bases of $D_1(G)/D_2(G)$, $D_2(G)/D_3(G)$,...,$D_{n-1}(G)/D_n(G)$ when these quotients are viewed as $\mathbb{F}_p$-vector spaces. Then the set
\[\left\{\prod_{i=1}^\ell (g_i-1)^{\alpha_i} \ | \ 0 \leq \alpha_1,...,\alpha_\ell \leq p-1, \ \sum_{i=1}^\ell \alpha_i \neq 0 \right\} \]
is a basis of the augmentation ideal $I(kG)$. If we add $1$ to this basis, we obtain a basis of the group algebra $kG$.
\end{theorem}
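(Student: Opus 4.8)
The plan is to prove that the displayed set, call it $\mathcal{B}$, together with the element $1$ forms a $k$-basis of $kG$ by combining two facts: a cardinality count and a spanning statement. Since a spanning set whose size equals the dimension of a finite-dimensional space is automatically a basis, I would not argue linear independence on its own; the count will do that work for free.

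First I would do the count. Give each generator $g_i$ a \emph{weight} $w_i$, namely the unique index with $g_i \in D_{w_i}(G) \setminus D_{w_i+1}(G)$; by the very definition of the dimension subgroups this gives $g_i - 1 \in I(kG)^{w_i}$. As the $g_i$ were chosen so that their images exhaust bases of the elementary abelian quotients $D_j(G)/D_{j+1}(G)$, we get $\ell = \sum_{j\ge 1} \dim_{\mathbb{F}_p} D_j(G)/D_{j+1}(G)$, and telescoping yields $p^\ell = \prod_{j \ge 1} |D_j(G)/D_{j+1}(G)| = |G|$. Counting the all-zero exponent tuple (which produces $1$) as well, the monomials $\prod_i (g_i-1)^{\alpha_i}$ with $0 \le \alpha_i \le p-1$ are indexed by exactly $p^\ell = |G| = \dim_k kG$ tuples, so $\mathcal{B} \cup \{1\}$ has the right cardinality.

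The real content is spanning. Extend the weight additively to monomials by $w\big(\prod_i (g_i-1)^{\alpha_i}\big) = \sum_i \alpha_i w_i$, so a monomial of weight $m$ lies in $I(kG)^m$. Since $I(kG)$ is spanned by $\{g-1 : g \in G\}$ and is nilpotent, $kG$ is spanned by $1$ together with all \emph{words} $(h_1-1)\cdots(h_s-1)$ with $h_t \in G$, and I would rewrite each such word as a $k$-combination of the standard monomials in $\mathcal{B}$. Two rewriting rules suffice, both controlled by Jennings' theorem. To reorder two adjacent factors one uses $(g_i-1)(g_j-1) - (g_j-1)(g_i-1) = g_ig_j - g_jg_i = g_jg_i(c-1)$ with $c = [g_i,g_j] \in [D_{w_i}(G), D_{w_j}(G)] \subseteq D_{w_i+w_j}(G)$, so that modulo $I(kG)^{w_i+w_j+1}$ the correction $c-1$ is a combination of single weight-$(w_i+w_j)$ generators. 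To lower an exponent one uses $(g_i-1)^p = g_i^p - 1$ with $g_i^p \in D_{w_i}(G)^p \subseteq D_{pw_i}(G)$, which places $g_i^p-1$ in weight $pw_i > w_i$. In each move the correction terms either jump strictly higher in the weight filtration or, at the same weight, become strictly shorter as words, so the rewriting terminates and returns a combination of standard monomials; this proves that $\mathcal{B} \cup \{1\}$ spans $kG$.

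The hard part will be making this straightening rigorous: one must fix a well-ordering on words---by weight (bounded above since $I(kG)^N = 0$ for large $N$), then length, then number of out-of-order adjacent pairs---and check that every application of the commutator rule and the $p$-th-power rule strictly decreases it, so the process cannot loop and terminates only on monomials whose exponents lie in $\{0,\dots,p-1\}$. Here the elementary abelian structure of the quotients $D_j(G)/D_{j+1}(G)$ is exactly what forces exponents at least $p$ to be absorbed into higher weight, while the two containments $[D_a(G),D_b(G)] \subseteq D_{a+b}(G)$ and $D_a(G)^p \subseteq D_{ap}(G)$ keep all corrections strictly deeper in the filtration. Once spanning is secured, the count makes $\mathcal{B} \cup \{1\}$ a basis of $kG$; since $1 \notin I(kG)$ and every element of $\mathcal{B}$ lies in $I(kG)$, removing $1$ leaves a basis of $I(kG)$, which is the first assertion.
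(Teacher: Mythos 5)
The paper does not actually prove this theorem: it is stated as a classical result of Jennings with a citation, and the reader is referred to textbook treatments (Huppert--Blackburn, Dixon--du~Sautoy--Mann--Segal, Benson) for detailed proofs. So there is no in-paper argument to compare against; judged on its own, your outline is the standard one and is sound: the telescoping count $p^\ell=\prod_j|D_j(G)/D_{j+1}(G)|=|G|$ reduces everything to spanning, and spanning is obtained by straightening words in the $(g_i-1)$ using the two containments $[D_a(G),D_b(G)]\subseteq D_{a+b}(G)$ and $D_a(G)^p\subseteq D_{ap}(G)$, with termination controlled by the filtration. Two points deserve to be made explicit if you write this up. First, you need a third rewriting rule, namely the product identity $(gh-1)=(g-1)+(h-1)+(g-1)(h-1)$ of Lemma~\ref{lem:BasicIds}: it is what lets you start the induction at all (an arbitrary $h\in G$ must be expressed through the chosen generators, using that the weight-one $g_i$ generate $G$ by the Burnside basis theorem since $D_2(G)=\Phi(G)$), and it is also what you are silently invoking when you claim that $c-1$ and $g_i^p-1$ become combinations of single higher-weight generators modulo deeper terms --- $c$ and $g_i^p$ are in general products of several generators of the appropriate weights, not single ones. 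Second, the cardinality remark should be phrased as an inequality: a priori the displayed set has \emph{at most} $p^\ell-1$ distinct elements, and it is only after spanning is established that the count forces the monomials to be pairwise distinct and linearly independent. Neither point is a real obstruction, and your acknowledgement that the well-founded ordering (weight, then length, then inversions, extended to multisets of words) is where the actual work lies is accurate; this is exactly the bookkeeping the textbook proofs carry out.
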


We call a basis as described in Theorem~\ref{th:Jennings} a \emph{Jennings basis} of $kG$. For abelian groups the theorem was obtained a few years earlier by Lombardo--Radici \cite{LombardoRadici}.

From Theorem~\ref{th:Jennings} one finds that the set of dimensions of $I(kG)^i/I(kG)^{i+1}$ and $D_j(G)/D_{j+1}(G)$ determine each other, where $i$ and $j$ run through all possible values. As the augmentation ideal of $kG$ is exactly its radical, and hence defined by a ring-theoretic property independent of $G$, this directly implies:

\begin{corollary}\label{cor:DimQuots}
If $kG \cong kH$, then $D_i(G)/D_{i+1}(G) \cong D_i(H)/D_{i+1}(H)$ for any integer $i$.
\end{corollary}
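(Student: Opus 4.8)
The plan is to leverage the fact, established in the paragraph preceding Theorem~\ref{th:Jennings} and made explicit right afterwards, that a Jennings basis organizes $kG$ in a way that ties the group-theoretic data $\dim_{\mathbb{F}_p} D_j(G)/D_{j+1}(G)$ to the purely ring-theoretic data $\dim_k I(kG)^i/I(kG)^{i+1}$. The key observation I would make first is that the augmentation ideal $I(kG)$ is intrinsic to the ring $kG$: as noted in the text, it coincides with the Jacobson radical, since the trivial module is the unique simple $kG$-module for a $p$-group $G$ over a field of characteristic $p$. Consequently, an $R$-algebra isomorphism $kG \cong kH$ automatically carries $I(kG)$ onto $I(kH)$, and hence carries each power $I(kG)^i$ onto $I(kH)^i$ and induces $k$-linear isomorphisms of the successive quotients $I(kG)^i/I(kG)^{i+1} \cong I(kH)^i/I(kH)^{i+1}$ for every $i$.

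The second step is to translate this isomorphism of radical-layer quotients back into group-theoretic information. From Theorem~\ref{th:Jennings}, a Jennings basis gives, for each $i$, an explicit $k$-basis of $I(kG)^i/I(kG)^{i+1}$ consisting of the monomials $\prod_{j} (g_j-1)^{\alpha_j}$ whose total weight (counting each $g_j$ in $D_{d}(G)/D_{d+1}(G)$ with multiplicity $d$, and multiplying by the exponent $\alpha_j$) equals $i$. Thus $\dim_k I(kG)^i/I(kG)^{i+1}$ is a fixed combinatorial function of the sequence $\bigl(\dim_{\mathbb{F}_p} D_j(G)/D_{j+1}(G)\bigr)_j$. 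I would record this as a generating-function identity: setting $d_j = \dim_{\mathbb{F}_p} D_j(G)/D_{j+1}(G)$, one has
\[
\sum_{i \ge 0} \bigl(\dim_k I(kG)^i/I(kG)^{i+1}\bigr)\, t^i \;=\; \prod_{j \ge 1} \left( \frac{1 - t^{jp}}{1 - t^{j}} \right)^{d_j},
\]
the factor $(1 + t^j + \cdots + t^{(p-1)j})^{d_j}$ accounting for the $d_j$ generators of weight $j$ each contributing an exponent between $0$ and $p-1$.

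The final step is to invert this relationship. Since $kG \cong kH$ forces the left-hand series to agree for $G$ and $H$, the two products over $j$ coincide as formal power series in $t$. Because the field $k$ has characteristic $p$ and the exponents run exactly from $0$ to $p-1$, the factorization on the right is rigid: comparing coefficients order by order lets me solve for the $d_j$ uniquely in terms of the series, so the sequences $\bigl(\dim D_j(G)/D_{j+1}(G)\bigr)_j$ and $\bigl(\dim D_j(H)/D_{j+1}(H)\bigr)_j$ must be equal. As each $D_j(G)/D_{j+1}(G)$ is elementary abelian, equality of $\mathbb{F}_p$-dimensions yields the asserted isomorphism $D_j(G)/D_{j+1}(G) \cong D_j(H)/D_{j+1}(H)$ for every $i$, which is the statement of Corollary~\ref{cor:DimQuots}. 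The one point demanding care, which I regard as the main obstacle, is justifying that the combinatorial passage between the layer dimensions and the subgroup-quotient dimensions is genuinely invertible rather than merely one-directional; once the generating-function identity above is in hand this is immediate, so the real content is simply assembling Theorem~\ref{th:Jennings} with the radical-invariance of $I(kG)$.
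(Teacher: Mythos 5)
Your proposal is correct and follows essentially the same route as the paper: identify $I(kG)$ with the Jacobson radical so that the layer dimensions $\dim_k I(kG)^i/I(kG)^{i+1}$ are ring-theoretic invariants, then use Theorem~\ref{th:Jennings} to pass between these and the ranks of the elementary abelian quotients $D_j(G)/D_{j+1}(G)$. The only difference is one of detail: where the paper simply asserts that the two families of dimensions ``determine each other,'' you make the invertibility explicit via the Poincar\'e-series identity and an order-by-order coefficient comparison, which is a welcome filling-in of the step the paper delegates to Jennings' theorem.
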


This invariant led to the first positive result on (MIP), as it was exactly what Deskins used to prove:

\begin{corollary} \cite{Deskins56}
(MIP) holds for abelian groups.
\end{corollary}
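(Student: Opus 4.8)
The plan is to combine the commutativity invariant with the explicit shape the dimension subgroups take for abelian groups. First I would observe that since $G$ being abelian is equivalent to $kG$ being commutative, and commutativity is preserved under any algebra isomorphism $kG \cong kH$, the group $H$ must be abelian as well. Thus from the outset both $G$ and $H$ may be assumed to be finite abelian $p$-groups, and it remains only to recover the isomorphism type of an abelian $p$-group from the invariants furnished by Corollary~\ref{cor:DimQuots}.

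Next I would specialise the closed expression $D_m(G) = \prod_{ip^j \geq m} \gamma_i(G)^{p^j}$ to the abelian case. Since $\gamma_i(G) = 1$ for all $i \geq 2$, only the factor $i = 1$ survives, so that $D_m(G) = \prod_{p^j \geq m} G^{p^j}$. As the subgroups $G^{p^j}$ form a descending chain $G \geq G^p \geq G^{p^2} \geq \cdots$, this product equals $G^{p^{j_0(m)}}$, where $j_0(m)$ is the least $j$ with $p^j \geq m$; in particular $D_{p^j}(G) = G^{p^j}$ while $D_{p^j+1}(G) = G^{p^{j+1}}$. Hence the only consecutive quotients that can be nontrivial occur at indices $m = p^j$, and Corollary~\ref{cor:DimQuots} yields $G^{p^j}/G^{p^{j+1}} \cong H^{p^j}/H^{p^{j+1}}$ for every $j \geq 0$.

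Finally I would invoke the classification of finite abelian $p$-groups to turn this information into an isomorphism. Writing $G \cong \prod_i C_{p^{e_i}}$, the elementary abelian quotient $G^{p^j}/G^{p^{j+1}}$ has $\mathbb{F}_p$-dimension $r_j := \#\{i : e_i > j\}$, so that $r_{j-1} - r_j$ counts exactly the cyclic factors of order $p^j$. Thus the sequence $(r_j)_{j \geq 0}$ determines the multiset of elementary divisors, and hence the isomorphism type of $G$. Since the displayed isomorphisms give $r_j(G) = r_j(H)$ for all $j$, the same partition is recovered for $H$, and $G \cong H$ follows.

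I expect the genuinely substantive step to be the explicit evaluation of the dimension subgroups in the abelian case, namely recognising that the Jennings series collapses to the $p$-power subgroup chain, so that the abstract invariant of Corollary~\ref{cor:DimQuots} becomes the concrete rank data $\dim_{\mathbb{F}_p} G^{p^j}/G^{p^{j+1}}$. Once this translation is made, the recovery of the isomorphism type is just the standard structure theory of finite abelian groups and presents no real difficulty.
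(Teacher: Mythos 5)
Your proposal is correct and follows exactly the route the paper indicates: reduce to the abelian case via commutativity of $kG$, specialise the Jennings series to $D_{p^j}(G)=G^{p^j}$, and read off the elementary divisors from the quotients supplied by Corollary~\ref{cor:DimQuots}. The survey only sketches this (attributing the argument to Deskins), and your write-up is a faithful and complete filling-in of that same argument.
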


In fact, the dimension subgroup series of abelian groups was also considered explicitly by Jennings \cite{Jennings41} and also Lombardo--Radici \cite{LombardoRadici}.

We remark that the quotient $D_1(G)/D_2(G)$ equals $G/\Phi(G)$. This invariant is sometimes attributed to Dieckman who studied it in a more general situation \cite{Dieckman67}. 

The results of Jennings and his successors are not only important in the context of (MIP), but also play a direct role in group theory or cohomology. They are for this reason included with detailed proofs in several textbooks, e.g.\ in \cite[Chapter VIII]{HuppertII}, \cite[Chapter 11]{DdSMS99} or \cite[Section 3.14]{BensonBookI}.

\section{Maps and ideals}\label{sec:maps}

The positive result on (MIP) for abelian groups allows to deduce some of the first known invariants. They both appear already in \cite{Passmanp4}.

\begin{theorem}\label{th:AbAndCent}
The isomorphism types of the abelianization $G/G'$ and the center $Z(G)$ are invariants of $kG$.
\end{theorem}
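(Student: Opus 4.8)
The plan is to reduce both assertions to the positive solution of (MIP) for abelian groups (Deskins' corollary): since $G/G'$ and $Z(G)$ are finite abelian $p$-groups, it suffices to produce, for each of them, a construction that recovers the group algebra $k[G/G']$, respectively $k[Z(G)]$, from $kG$ in a purely ring-theoretic way. Any isomorphism $kG \cong kH$ then transports the construction, giving $k[G/G'] \cong k[H/H']$ and $k[Z(G)] \cong k[Z(H)]$, and (MIP) for abelian groups upgrades these to $G/G' \cong H/H'$ and $Z(G) \cong Z(H)$.

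For the abelianization I would identify $k[G/G']$ with the abelianization of the algebra $kG$. Let $J$ be the two-sided ideal of $kG$ generated by all commutators $ab-ba$; this is manifestly a ring-theoretic invariant, and I claim
\[ kG/J \;\cong\; k[G/G']. \]
Indeed, the map $kG \to k[G/G']$ induced by the projection $G \to G/G'$ has commutative target, hence kills $J$ and factors through $kG/J$. Conversely, $kG/J$ is commutative, so the images of any $g,h \in G$ commute; since these images are units, this forces the image of every commutator $[g,h]$, and therefore of all of $G'$, to be trivial, yielding an inverse map. Feeding the displayed isomorphism into the reduction settles this case.

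The center is the delicate part, and I expect it to be the main obstacle. The center $Z(kG)$ is of course a ring-theoretic invariant, but—unlike the abelianization—it is in general \emph{strictly larger} than $k[Z(G)]$: it is spanned by all conjugacy class sums, whereas $k[Z(G)]$ corresponds only to the singleton classes. So one cannot simply read $k[Z(G)]$ off as $Z(kG)$, and the real task is to isolate $k[Z(G)]$, or the isomorphism type of $Z(G)$ itself, inside $Z(kG)$ by intrinsic means. One clean piece of leverage is that the normalized unit group $V(kG)=1+I(kG)$ is ring-theoretic, because $I(kG)$ is the Jacobson radical and is nilpotent; hence its center $Z(V(kG)) = 1+\operatorname{rad}Z(kG)$ is ring-theoretic as well, and $Z(G)$ embeds into it—but this group is far larger than $Z(G)$ and by itself only bounds it. To pin down $Z(G)$ up to isomorphism I would try to characterize $k[Z(G)]$ as a \emph{distinguished} subalgebra of $Z(kG)$—for instance as a subalgebra, maximal with respect to the property that $kG$ is free as a module over it, or via the finer Frobenius power-map and conjugacy-class data carried by $Z(kG)$—and then apply (MIP) for abelian groups to the resulting algebra. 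Making such a characterization precise and proving that it returns \emph{exactly} $k[Z(G)]$ is the crux of the argument; over the prime field $\mathbb{F}_p$ one gains the additional advantage that $kG$ is finite, so that $V(kG)$ and $Z(V(kG))$ are finite $p$-groups whose invariants can be compared directly.
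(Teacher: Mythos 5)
Your treatment of the abelianization is correct and is essentially the paper's argument: the ideal generated by the commutators $xy-yx$ coincides with the relative augmentation ideal $I(kG')kG$, its quotient is $k(G/G')$, and Deskins' positive answer to (MIP) for abelian groups finishes that case.

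For the center, however, you have correctly located the difficulty but not overcome it. Your proposal stops at the intention to ``characterize $k[Z(G)]$ as a distinguished subalgebra of $Z(kG)$'' and concedes that making this precise is ``the crux of the argument''; neither candidate characterization you float is developed or obviously workable (maximality with respect to $kG$ being free as a module over the subalgebra would require existence, uniqueness and identification arguments you do not supply). The missing idea is to use not an ideal but the commutator \emph{subspace} $[kG,kG]$, the $k$-linear span of all $xy-yx$, which is a ring-theoretic invariant as a subspace and consists exactly of the elements whose coefficients sum to zero over each conjugacy class. The crucial $p$-group input is then: every non-central conjugacy class has size a positive power of $p$, so its class sum has coefficient sum $\equiv 0 \bmod p$ on that class and hence lies in $[kG,kG]$, whereas the class sum of a central element (a single group element) does not. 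Consequently $Z(kG)\cap[kG,kG]$ is precisely the span of the non-central class sums; this is an ideal of $Z(kG)$ and the quotient $Z(kG)/\bigl(Z(kG)\cap[kG,kG]\bigr)$ is isomorphic to $kZ(G)$. Since $Z(kG)$ and $[kG,kG]$ are defined without reference to a group basis, this quotient is an invariant of $kG$, and Deskins' theorem upgrades it to $Z(G)\cong Z(H)$. Without some such concrete isolation of $kZ(G)$ inside $Z(kG)$, your argument for the center is a plan rather than a proof.
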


These basic invariants together with the dimension subgroup quotients from Corollary~\ref{cor:DimQuots} are enough to show that the non-abelian groups of order $p^3$ have non-isomorphic group algebras over any field of characteristic $p$, as observed in \cite{Passmanp4}.

We sketch the proof of Theorem~\ref{th:AbAndCent} and introduce concepts which are very useful in the general study of group rings. For a normal subgroup $N$ of $G$ we call the ideal generated by $\{n - 1 \ | \ n \in N \}$ inside $RG$ the \emph{relative augmentation ideal} of $N$, denoted $I(RN)RG$. Writing the elements of $G$ inside $RG$ with respect to a chosen transversal of $N$ in $G$ it is easy to see that $I(RN)RG$ is the kernel of the linear extension of the natural homomorphism $G \rightarrow G/N$ to $RG$. Hence $RG/I(RN)RG \cong R(G/N)$.  We will also need the \emph{commutator subspace} of $RG$, denoted $[RG,RG]$, which is the $R$-linear space spanned by elements of shape $xy - yx$ for $x,y \in RG$. From the multiplication rule in $RG$ it is clear that $[RG,RG]$ is in fact spanned by elements of type $gh-hg$ for $g,h \in G$.

Now note that the ideal $I(RG')RG$ is the smallest ideal $J$ of $RG$ with commutative quotient: indeed $J$ must contain $g^{-1}h^{-1}gh-1$ for all $g,h \in G$, so it contains $I(RG')RG$. On the other hand as $hg(g^{-1}h^{-1}gh-1) = gh-hg$ for any $g,h \in G$, the ideal $I(RG')RG$ will also contain elements of the shape $xy - yx$ for any $x,y, \in RG$, thus $RG/I(RG')RG$ is commutative. So we can recognize $I(RG')RG$ inside $RG$ by a property defined independently of any group basis, which means that this is also true for its quotient $RG/I(RG')RG \cong R(G/G')$. So in the situation of (MIP) we have found $k(G/G')$, but as (MIP) has a positive answer for abelian groups this means that the isomorphism type of $G/G'$ is an invariant.

To see that also the isomorphism type of $Z(G)$ is an invariant we use a basis for the center of $RG$ given by the class sums of elements in $G$. Recall that for a conjugacy class $C$ in $G$ the \emph{class sum} of $C$ in $RG$ is $\sum_{x \in C} x$. Then it is easy to see that the class sums form an $R$-linear basis of $Z(RG)$, the center of $RG$, which is itself also an $R$-algebra. Note that the elements $gh$ and $hg$ are conjugate in $G$ for any 
$g,h \in G$. As for any $g,h \in G$ also the element $g-h^{-1}gh = h(h^{-1}g) - h^{-1}gh$ lies in $[RG,RG]$, it follows that $[RG,RG]$ consists exactly of those elements in $RG$ the sum of whose coefficients on each conjugacy class vanishes. In the situation of (MIP) the conjugacy class of a non-central element has order a power of $p$ and so the class sum of such a class lies in $[kG, kG]$, while the class sum of a central element clearly does not. It follows that the intersection of $Z(kG)$ with $[kG,kG]$ is spanned by the class sums of non-central elements and that this is an ideal in $Z(kG)$. The quotient is isomorphic to $kZ(G)$ and as (MIP) has a positive solution for abelian groups, we conclude that the isomorphism type of $Z(G)$ is an invariant of $kG$. Note that $Z(G)$ is not determined as a subset of $kG$ though, as demonstrated in Example~\ref{ex:kC3}.

Using the objects $Z(kG)$, $[kG, kG]$ and $I(kG')kG$ Sandling also showed that the isomorphism types of $Z(G) \cap G'$ and $Z(G)/Z(G)\cap G'$ are invariants of $kG$ \cite{SandlingSurvey}. If one takes into account also power maps on these objects his results can be generalized to include more invariants. To state those we introduce the notation $\Omega_n(G) = \langle g \in G \ | \ g^{p^n} = 1 \rangle$ for a non-negative integer $n$. 

\begin{theorem}\label{th:OmegaAndAgemo} \cite{MargolisSakuraiStanojkovski21} For every non-negative integer $n$ and any field $k$ of characteristic $p$ the isomorphism types of the following abelian quotients are invariants of $kG$:
	\begin{multicols}{2}
		\begin{enumerate}
			\item\label{it:invariants1} $G/\Omega_n(Z(G))G'$,
			\item\label{it:invariants2} $\Omega_n(Z(G))G'/G'$,
			\columnbreak
			\item\label{it:invariants3} $Z(G)\cap G'G^{p^n}$,
			\item\label{it:invariants4} $Z(G)/Z(G)\cap G'G^{p^n}$.
		\end{enumerate}
	\end{multicols}
\end{theorem}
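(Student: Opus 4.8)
The plan is to reduce the whole statement to the positive solution of (MIP) for abelian groups (Deskins' corollary): since all four groups listed are abelian, it suffices to exhibit, for each of them, a ring-theoretically defined (hence basis-independent) subquotient of $kG$ that is isomorphic to its group algebra. I would take as given the recognizable objects produced in the proof of Theorem~\ref{th:AbAndCent}: the commutative quotient $kG/I(kG')kG \cong k(G/G')$, which recovers $\bar G := G/G'$; the center $Z(kG)$ together with $Z(kG)/(Z(kG)\cap[kG,kG]) \cong kZ(G)$, which recovers $Z(G)$; and the image of $Z(kG)$ inside $k\bar G$, which recovers the subgroup $\bar Z := Z(G)G'/G' \le \bar G$. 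The one recurring elementary fact I will use is that in a commutative $k$-algebra of characteristic $p$ the map $x \mapsto x^{p^n}$ is additive and multiplicative.

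Two Frobenius-type constructions on these recognizable commutative algebras bring in $G^{p^n}$ and $\Omega_n$. In $A := k\bar G$ the ideal generated by the $p^n$-th powers of the elements of the radical $I(A)$ is, using $(a-1)^{p^n} = a^{p^n}-1$ and additivity, exactly the relative augmentation ideal of $\bar G^{p^n} = G'G^{p^n}/G'$; hence $A/\bigl(\text{this ideal}\bigr) \cong k(G/G'G^{p^n})$. Pulling this quotient back to $kG$ and taking the image of $Z(kG)$, whose non-central class sums vanish because their class sizes are divisible by $p$, yields $k\bigl(Z(G)/(Z(G)\cap G'G^{p^n})\bigr)$ and thus invariant (3)--(4). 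Dually, on $B := kZ(G)$ the endomorphism $x \mapsto x^{p^n}$ has as kernel an ideal, and a short computation by cosets identifies this kernel with the relative augmentation ideal $I(k\Omega_n(Z(G)))B$. Transporting it along the recognizable map $B \to k\bar Z \subseteq A$ gives the relative augmentation ideal of $\bar\Omega := \Omega_n(Z(G))G'/G'$ in $A$, so the corresponding quotient of $A$ is $k(G/\Omega_n(Z(G))G')$, which is invariant (1).

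So far invariants (1) and (4) appear as quotient algebras, and in the process the two subgroups $\bar\Omega \le \bar G$ and $U := Z(G)\cap G'G^{p^n} \le Z(G)$ are pinned down only through their relative augmentation ideals. The step I expect to be the main obstacle is recovering invariants (2) and (3), that is, the isomorphism types of $\bar\Omega$ and of $U$ themselves: for abelian $p$-groups a subgroup is genuinely not determined by the ambient group and the quotient (for instance a subgroup of order $p^2$ in $C_p\times C_{p^2}$ with quotient $C_p$ can be either $C_p\times C_p$ or $C_{p^2}$), so the subgroup must be read off from the recognized ideal directly.

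To do this I would extract the Loewy layers of the relative augmentation ideal. Write $V \le W$ for either $\bar\Omega \le \bar G$ or $U \le Z(G)$, set $\mathfrak a := I(kV)(kW)$, and note that $kW$ is free of rank $[W:V]$ over $kV$; flatness then gives $\mathfrak a^i = I(kV)^i(kW)$ and $\mathfrak a^i/\mathfrak a^{i+1} \cong \bigl(I(kV)^i/I(kV)^{i+1}\bigr)\otimes_{kV}kW$, so that $\dim_k \mathfrak a^i/\mathfrak a^{i+1} = [W:V]\cdot \dim_k I(kV)^i/I(kV)^{i+1}$. Since $\mathfrak a$ and its powers are recognizable and $[W:V]$ is the dimension of the already recognized quotient, the numbers $\dim_k I(kV)^i/I(kV)^{i+1}$ are invariants; by Theorem~\ref{th:Jennings} and Corollary~\ref{cor:DimQuots} these determine the dimension-subgroup quotients of $V$, and for the abelian group $V$ these in turn determine its isomorphism type (this is exactly Deskins' argument). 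This yields invariants (2) and (3). The remaining work — checking that each map and ideal used is genuinely ring-theoretic and carrying out the two coset computations that identify the Frobenius kernel and the $p^n$-th-power ideal — is routine.
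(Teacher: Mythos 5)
Your proposal is correct, and it follows precisely the strategy this survey attributes to the cited source: the theorem is stated here without proof, but the text explicitly says it is obtained by ``taking into account also power maps'' on the recognizable objects $Z(kG)$, $[kG,kG]$ and $I(kG')kG$ used for Theorem~\ref{th:AbAndCent}, which is exactly what your two Frobenius constructions do before reducing everything to Deskins' result for abelian groups. All the individual verifications check out: the image of $Z(kG)$ in any commutative quotient of $kG$ is the span of the images of the central elements (non-central class sums die because their lengths are divisible by $p$), the ideal generated by $p^n$-th powers of $I(k\bar G)$ is $I(k\bar G^{p^n})k\bar G$, and the kernel of $x\mapsto x^{p^n}$ on $kZ(G)$ is $I(k\Omega_n(Z(G)))kZ(G)$ by the transversal computation you indicate (note the kernel is a $k$-subspace and an ideal even though Frobenius is only semilinear, and it is preserved by any $k$-algebra isomorphism since $\varphi(x^{p^n})=\varphi(x)^{p^n}$). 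The one place where you correctly identified and supplied an idea that the survey's sketch of Theorem~\ref{th:AbAndCent} does not provide is the recovery of the \emph{subgroups} in items (2) and (3) rather than just the quotients: your observation that $kW$ is $kV$-free, so that the Loewy layers of the recognizable ideal $I(kV)kW$ have dimensions $[W:V]\cdot\dim_k I(kV)^i/I(kV)^{i+1}$ and hence determine the Jennings data and thereby the abelian group $V$, is a clean and valid way to close that gap; whether or not it coincides with the device used in the original paper, it is correct and self-contained.
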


Very little is known with regard to reduction of (MIP) to smaller groups. One such reduction can be proven using the invariant $\Phi(G) \cap \Omega_1(Z(G))$ which one gets from Theorem~\ref{th:OmegaAndAgemo}. To state it call a decomposition $G = U \times T$ \emph{elementary} if $T \cap \Phi(G) = 1$ and $|T| = [\Omega_1(Z(G)): \Omega_1(Z(G)) \cap \Phi(G) ]$. Less technically, one can think of $T$ as the biggest elementary abelian direct factor of $G$. We then have:

\begin{theorem}\cite{MargolisSakuraiStanojkovski21}
Assume $kG \cong kH$ and let $G = U \times T$ and $H = V \times S$ be elementary decompositions of $G$ and $H$. Then $T \cong S$ and $kU \cong kV$.
\end{theorem}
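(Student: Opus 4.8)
Since $G = U \times T$ is a direct decomposition, the group algebra splits as a tensor product, $kG \cong kU \otimes_k kT$, and likewise $kH \cong kV \otimes_k kS$. The statement therefore has two separate components: identifying $T$ with $S$ as abstract groups, and cancelling the common tensor factor. I would treat these in turn.

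For the first component I would argue that $|T|$ is an invariant of $kG$. Because $G = U \times T$ forces $T \cap \Phi(G) = 1$ with $G/\Phi(G)$ elementary abelian, the group $T$ is itself elementary abelian and, being a central direct factor, lies inside $\Omega_1(Z(G))$; by the defining equality $|T| = [\Omega_1(Z(G)) : \Omega_1(Z(G)) \cap \Phi(G)]$ it suffices to recover the two orders on the right. The order of $\Omega_1(Z(G))$ is determined by the isomorphism type of $Z(G)$, which is an invariant by Theorem~\ref{th:AbAndCent}. For the intersection I would write $\Phi(G) = G'G^{p}$, so that $Z(G) \cap \Phi(G) = Z(G) \cap G'G^{p^1}$ is exactly the invariant furnished by Theorem~\ref{th:OmegaAndAgemo}\eqref{it:invariants3} with $n = 1$; and since $Z(G) \cap \Phi(G)$ is central, its elements of order dividing $p$ are precisely $\Omega_1(Z(G)) \cap \Phi(G)$, whose order is read off from the isomorphism type of $Z(G) \cap \Phi(G)$. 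Thus $|T|$ is an invariant, giving $|T| = |S|$, and as both are elementary abelian this yields $T \cong S$.

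For the second component I would observe that $T \cong S$ gives $kT \cong kS$, so the isomorphism $kG \cong kH$ becomes $kU \otimes_k kT \cong kV \otimes_k kT$. Here $kT \cong k[x_1,\dots,x_d]/(x_1^p,\dots,x_d^p)$ is a local commutative truncated polynomial algebra, where $d$ is the rank of $T$; equivalently, one has the canonical identification $kU \cong k(G/T) \cong kG / I(kT)kG$. It therefore remains to cancel the factor $kT$, and by peeling off one cyclic factor at a time this reduces by induction on $d$ to the case $kT \cong kC_p = k[x]/(x^p)$: one must show that $A \otimes_k k[x]/(x^p) \cong B \otimes_k k[x]/(x^p)$ implies $A \cong B$ when $A = kU$ and $B = kV$.

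\emph{The hard part is this cancellation.} It is not a formal consequence of the tensor isomorphism, since cancellation of a tensor factor can fail for general finite-dimensional algebras; one must use the extra structure available here, namely that $kU$ and $kV$ are finite-dimensional local augmented (indeed symmetric) algebras with residue field $k$. Concretely, the obstacle is that the factor $T$ is not a canonical subset of $kG$ (exactly as $Z(G)$ is not, cf.\ Example~\ref{ex:kC3}), so one cannot simply match the ideals $I(kT)kG$ and $I(kS)kH$ under the isomorphism by fiat. I would attack it by characterizing a splitting variable intrinsically inside $kG \cong kU \otimes_k kT$: a generator $x_i = t_i - 1$ is central, lies in $I(kG) \setminus I(kG)^2$, satisfies $x_i^p = 0$, and splits off a polynomial tensor factor. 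The plan is to transport such a system of elements through the isomorphism to $kH$, show that any two admissible choices differ by an automorphism of the ambient algebra, and conclude that the quotients by the resulting ideals, namely $kU$ and $kV$, are isomorphic. Controlling this non-canonicity, i.e.\ proving that the splitting is unique up to automorphism so that the induced quotient is well defined, is where the genuine work lies and is the step I expect to be the main obstacle.
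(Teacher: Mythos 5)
The first half of your argument, $T\cong S$, is correct and follows exactly the route the survey indicates for this theorem: the index $[\Omega_1(Z(G)):\Omega_1(Z(G))\cap\Phi(G)]$ is recovered from the invariance of the isomorphism types of $Z(G)$ (Theorem~\ref{th:AbAndCent}) and of $Z(G)\cap G'G^{p}$ (Theorem~\ref{th:OmegaAndAgemo}\eqref{it:invariants3} with $n=1$), and elementary abelian $p$-groups of equal order are isomorphic. No objection there.

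The second half, $kU\cong kV$, is a genuine gap: from ``it therefore remains to cancel the factor $kT$'' onward you describe what a proof would have to accomplish without supplying any of it. The assertion that ``any two admissible choices differ by an automorphism of the ambient algebra'' is not a lemma you can appeal to --- it is, essentially verbatim, the statement to be proven, since the quotient of $kH$ by the ideal generated by one admissible tuple is $kV$ while the quotient by another is whatever algebra that tuple happens to cut out. You have correctly located the difficulty (the ideal $I(kT)kG$ is not canonical in $kG$, in the same way that $Z(G)$ is not a canonical subset, cf.\ Example~\ref{ex:kC3}), but locating the difficulty is not overcoming it: you give no intrinsic characterization of the splitting elements, no argument that such elements exist in $kH$ in the required number, and no uniqueness argument for the resulting quotient. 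Note moreover that, combined with your first half, the missing single-variable cancellation ($A\otimes_k k[x]/(x^p)\cong B\otimes_k k[x]/(x^p)$ implies $A\cong B$ for modular group algebras $A=kU$, $B=kV$) is in fact \emph{equivalent} to the theorem: applying the theorem to $G\times C_p$ and $H\times C_p$, whose elementary decompositions are obtained from those of $G$ and $H$ by enlarging $T$ and $S$ by one $C_p$ factor, recovers precisely this cancellation. So what remains unproven in your proposal is the entire substance of the result of \cite{MargolisSakuraiStanojkovski21}. The survey only cites that proof rather than reproducing it, but an acceptable write-up would at minimum have to establish the cancellation for modular group algebras rather than flag it as ``the main obstacle.''
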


It is not known if a similar theorem holds for abelian factors, not even if $k(G \times A) \cong k(H \times A)$ for $A$ an abelian group, does imply $kG \cong kH$.

There are two more invariants connected to the center and power maps which provide us with information on the conjugacy classes of $G$. The first one is due to K\"ulshammer and in particular implies that the exponent of $G$ is an invariant of $kG$ and even of the defect group of a $p$-block \cite{Kuelshammer82}. To explain the part relevant for (MIP) note that for a non-negative integer $n$ and a conjugacy class $C$ in $G$ also the $p^n$-th powers of elements in $C$ form a conjugacy class. Now define a map on $kG/[kG,kG]$ by sending an element $x$ to its $p^{n}$-th power $x^{p^n}$. Then the dimension of the image of this map will be the number of conjugacy classes which are not vanishing after taking the $p^n$-th power, i.e.\ the number of conjugacy classes of $p^n$-th powers.  

A second invariant involving powers of conjugacy classes was discovered by Parmenter and Polcino Milies \cite{ParmenterPolcinoMilies81}. We present the idea as it is reinterpreted in \cite{HertweckSoriano06}. We define the $p^n$-th powers not on $kG/[kG,kG]$, but on $Z(kG)$ and compute again the dimension of the image. It turns out that this is the number of conjugacy classes $C$ in $G$ for which there exists some conjugacy class $D$ such that the $p^n$-th powers of elements in $D$ lie in $C$ and $D$ and $C$ have the same order. Equivalently this is the number of conjugacy classes $C$ so that for $g \in C$ there exists $h \in G$ with $h^{p^n} = g$ and no conjugate $\tilde{h}$ of $h$ satisfies $\tilde{h}^{p^n} = g$. Though in \cite{ParmenterPolcinoMilies81} the authors work with the field of $p$ elements, their proof actually works for any field of characteristic $p$. We summarize:

\begin{theorem}\cite{Kuelshammer82, ParmenterPolcinoMilies81}
The number of conjugacy classes of $G$ which are $p^n$-th powers is an invariant of $kG$ for any non-negative integer $n$. So is the number of conjugacy classes which have the same order as a class which powers to them under the $p^n$-th power map.
\end{theorem}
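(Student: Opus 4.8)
The plan is to treat both statements by a single template. For each we isolate a subspace (or subquotient) of $kG$ that is defined purely ring-theoretically, hence preserved by any $k$-algebra isomorphism, equip it with the $p^n$-th power map $x\mapsto x^{p^n}$ (which every ring isomorphism respects since $\Psi(x^{p^n})=\Psi(x)^{p^n}$), and identify the $k$-span of the image of this map with a span of conjugacy-class data. Since the object and the power map are intrinsic to the algebra, the dimension of this span is an invariant; the whole content lies in computing it. For the first invariant the object is $kG/[kG,kG]$ and for the second it is $Z(kG)$. In both cases we use that these spaces carry a basis indexed by the conjugacy classes of $G$: the classes of representatives $g+[kG,kG]$ in the first case, and the class sums $\widehat{C}=\sum_{x\in C}x$ in the second (both facts are already recorded in the discussion of $Z(G)$ above).

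For the K\"ulshammer invariant the key point, and the step I expect to be the main obstacle, is that $x \mapsto x^{p}$ induces a well-defined map on $A/[A,A]$ for any algebra $A$ of characteristic $p$; iterating $n$ times then yields a $p^n$-semilinear map $F_n$ on $kG/[kG,kG]$. I would prove this by the classical \emph{necklace} argument: expanding $(a+b)^{p}$, every monomial involving at least one $a$ and one $b$ has length $p$ and is not a proper power, so its $p$ cyclic rotations are distinct; cyclic rotations of a word are congruent modulo $[A,A]$ (since $uv\equiv vu$), so each rotation orbit contributes $p$ copies of one element, i.e.\ $0$ in characteristic $p$. This gives $(a+b)^{p}\equiv a^{p}+b^{p}$, and the same rotation identity gives $(ab)^{p}\equiv(ba)^{p}$, from which well-definedness on the quotient follows. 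Evaluating on the class-representative basis, $F_n$ sends $g+[kG,kG]$ to $g^{p^n}+[kG,kG]$, so the $k$-span of its image is spanned by the classes of the elements $g^{p^n}$; its dimension is exactly the number of conjugacy classes that occur as $p^n$-th powers.

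For the Parmenter--Polcino Milies invariant the situation is cleaner because $Z(kG)$ is commutative, so $x\mapsto x^{p^n}$ is automatically a $p^n$-semilinear ring endomorphism without any commutator correction. I would compute $\widehat{C}^{\,p^n}=\sum_{x\in C}x^{p^n}$ using $(x+y)^{p^n}=x^{p^n}+y^{p^n}$, and then observe that the map $C\to D:=\{x^{p^n}:x\in C\}$ is a surjection of transitive $G$-sets that is $G$-equivariant, hence $|C|/|D|$-to-one. Therefore $\widehat{C}^{\,p^n}=\tfrac{|C|}{|D|}\,\widehat{D}$; since $|C|$ and $|D|$ are powers of $p$, the coefficient $|C|/|D|$ vanishes in $k$ unless $|C|=|D|$, in which case it equals $1$. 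Thus $\widehat{C}^{\,p^n}$ equals $\widehat{C^{p^n}}$ when the $p^n$-th power map preserves the class size and is $0$ otherwise, so the $k$-span of the image is spanned by the class sums $\widehat{C^{p^n}}$ ranging over classes $C$ with $|C^{p^n}|=|C|$. Its dimension counts precisely the classes $D$ that are hit by some equal-sized class under the $p^n$-th power map, which is the asserted count.

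Finally, a small point to handle with care is that Frobenius need not be surjective on a non-perfect $k$, so the literal image of $F_n$ is only closed under addition and under multiplication by $p^n$-th powers of scalars. I would therefore phrase the invariant as the dimension of the \emph{$k$-linear span} of the image; since an algebra isomorphism $kG\cong kH$ induces a $k$-linear isomorphism $kG/[kG,kG]\cong kH/[kH,kH]$ (respectively $Z(kG)\cong Z(kH)$) intertwining the two power maps, it carries these spans to one another and preserves their dimension. Over the prime field $\mathbb{F}_p$ the map $F_n$ is honestly $k$-linear and this subtlety disappears, recovering the original formulations.
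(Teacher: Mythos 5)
Your proposal is correct and follows exactly the route the paper sketches: the $p^n$-th power map on $kG/[kG,kG]$ for K\"ulshammer's invariant and on $Z(kG)$ for the Parmenter--Polcino Milies invariant, with the dimension of (the span of) the image counted against the conjugacy-class bases. Your necklace argument for well-definedness on $A/[A,A]$, the fibre-counting identity $\widehat{C}^{\,p^n}=\tfrac{|C|}{|D|}\widehat{D}$, and the remark about semilinearity over non-prime fields are exactly the details the paper leaves implicit.
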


Especially in the early results on (MIP) also numerical invariants of maps played a major role and we next describe the basic ideas here.

\subsection{Kernel size}
Say the ground field $k$ is the prime field $\mathbb{F}_p$. Then $kG$ is a finite structure and the basis provided by Jennings for the augmentation ideal $I(kG)$ allows to rather simply count the number of elements in $I(kG)$ or some of its quotients which satisfy some condition. E.g.\ one could count the number of elements in $I(kG)/I(kG)^2$ which map to $0$ in $I(kG)^p/I(kG)^{p+1}$ when $x$ is sent to $x^p$. This idea was first described by Brauer which led him to state that ``it might be much easier to study Problem 2 [the Isomorphism Problem] in this particular case [the (MIP) case]'' \cite{Brauer63}. 

To carry out such calculations explicitly one uses the following:
\begin{lemma}\label{lem:BasicIds}
Let $g$ and $h$ be any elements in $G$. Then in $kG$ we have
\begin{align*}\label{eq:BasicFormulas}
(gh-1) &= (g-1) + (h-1) + (g-1)(h-1), \\
(h-1)(g-1) &= (g-1)(h-1) + (1 + (g-1) + (h-1) + (g-1)(h-1))([g,h]-1).
\end{align*}
\end{lemma}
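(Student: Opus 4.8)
The plan is to treat both lines as identities between $\mathbb{Z}$-linear combinations of group elements: each is built only from the symbols $g$, $h$, $gh$, $hg$, $[g,h]$ and $1$, so it suffices to verify them formally from the multiplication of $G$, after which they hold in $\mathbb{Z}G$ and hence, by change of coefficients, in $RG$ for every $R$ and in particular in $kG$. In other words the field and its characteristic play no role whatsoever; this is a purely combinatorial statement about the group basis.

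For the first identity I would simply expand the single product on the right. Since $(g-1)(h-1) = gh - g - h + 1$, adding $(g-1) + (h-1)$ cancels the terms $-g$ and $-h$ and leaves $gh - 1$, which is the left-hand side. Nothing beyond the distributive law in $RG$ is used. A pleasant by-product, which I would record immediately for use in the second part, is that adding $1$ to this computation gives
\[ 1 + (g-1) + (h-1) + (g-1)(h-1) = gh, \]
so the unwieldy bracketed coefficient in the second line is nothing but the group element $gh$.

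For the second identity the strategy is to subtract $(g-1)(h-1)$ from $(h-1)(g-1)$ and see what survives. Expanding both products, the terms $-g-h+1$ are common and cancel, leaving exactly $hg - gh$ on the left. Using the by-product above, the right-hand side equals $gh\bigl([g,h]-1\bigr) = gh\,[g,h] - gh$, so everything reduces to a single relation among group elements. Here lies the only real obstacle, namely the orientation of the commutator: with the convention $[x,y] = x^{-1}y^{-1}xy$ fixed in the introduction one has the clean cancellation $gh\,[h,g] = gh\,h^{-1}g^{-1}hg = hg$, whereas $gh\,[g,h]$ does \emph{not} reduce to $hg$. Thus the expansion closes precisely with the coefficient $gh$ multiplying $\bigl([h,g]-1\bigr) = \bigl([g,h]^{-1}-1\bigr)$, and the one thing demanding care throughout is keeping track of whether $[g,h]$ or its inverse $[h,g]$ appears. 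Once the correct orientation is used, both sides agree term by term and the proof is complete.
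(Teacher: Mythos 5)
Your proof is correct and is the only natural one: direct expansion from the distributive law, which is exactly what the paper has in mind (it offers no proof, calling these ``easy but fundamental identities''). Your observation about the commutator orientation is also right and worth recording: with the paper's convention $[g,h]=g^{-1}h^{-1}gh$ one has $gh\,[h,g]=hg$ but not $gh\,[g,h]=hg$ in general, so the second identity as printed should read $[h,g]$ (equivalently $[g,h]^{-1}$) in place of $[g,h]$ --- a harmless slip in the paper's applications, which are either in characteristic $2$ with $[g,h]$ an involution or symmetric in $g$ and $h$, but one that your careful bookkeeping correctly exposes.
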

These are easy but fundamental identities which are needed for calculations with a Jennings basis.

\begin{example}
Let $G = \langle a, b \ | \ a^4 = 1, b^2 = a^2, a^b = a^{-1} \rangle$ be the quaternion group of order $8$ and $H = \langle c, d \ | \ c^4 = d^2 = 1, c^d = c^{-1} \rangle$ the dihedral group of order $8$. Set $k = \mathbb{F}_2$. We will show that $kG$ and $kH$ are not isomorphic by counting the number of elements mapping to $0$ under the square-map from $I(kG)/I(kG)^2$ to $I(kG)^2/I(kG)^3$ or from $I(kH)/I(kH)^2$ to $I(kH)^2/I(kH)^3$, respectively.

Abusing notation we will denote images of elements of $I(kG)$ the same way in $I(kG)/I(kG)^2$. First note that by Jennings' Theorem~\ref{th:Jennings} we get that $\{(a-1), (b-1)\}$ forms a basis of $I(kG)/I(kG)^2$.  So $I(kG)/I(kG)^2 = \{0, a-1, b-1, (a-1) + (b-1) \}$. We also can compute that $D_2(G) = \langle a^2 \rangle$ while $D_3(G) = 1$, so that $a^2-1$ lies in $I(kG)^2$ but not in $I(kG)^3$ by Jennings' theorem. Hence the squares of $(a-1)$ and $(b-1)$ are not $0$ in $I(kG)^2/I(kG)^3$. For the last element we use the identities given in Lemma~\ref{lem:BasicIds}:
\begin{align*}
((a-1) + (b-1))^2 &= (a^2-1) + (b^2-1) + (a-1)(b-1) + (b-1)(a-1) \\
&= ( 1+ (a-1)+(b-1)+(a-1)(b-1))(a^2-1) \equiv a^2-1 \bmod I(kG)^3 
\end{align*}
Hence this square is also not $0$ in $I(kG)^2/I(kG)^3$.

On the other hand we get similarly that $I(kH)/I(kH)^2 = \{0, c-1, d-1, (c-1) + (d-1) \}$ and clearly $(d-1)^2 = 0$. Hence $kG$ and $kH$ are not isomorphic.
\end{example} 

This example in fact was the first application of Brauer's idea by Passman in \cite{Passmanp4} who called the numbers computed in this way a \emph{kernel size}. Passman also handled groups of order $p^4$ over the field with $p$ elements. For $p=3$ the kernel size together with the other invariants mentioned so far turned out to be enough, but he needed additional arguments in the other cases. His ideas here are somehow similar to the kernel size, but they use not the augmentation ideal to define maps, but certain special subspaces of the group algebras which are shown to be independent of the chosen group basis, but are defined very particularly using the concrete groups. In fact the groups needing these special arguments are triples of groups of maximal class which are considered separately for the case $p=2$ and $p \geq 5$. The $2$-groups of maximal class of any order were later shown to have non-isomorphic group algebras over any field of characteristic $2$ \cite{Carlson77, Baginski92} while this is still an open question for groups of maximal class for other primes. We note that the proof for groups of maximal class of order $p^5$ is very special and technical, highlighting the difficulties for this class \cite{SalimSandlingMaximal}. Aiming to reprove Passman's result and relying on the other invariants computed by him before the specific subspaces, in the case $p=2$ one could use today the Quillen invariant, cf.\ Section~\ref{sec:Cohomology}, while for $p \geq 5$ the only usable invariant seems to be the isomorphism type of $G/\gamma_2(G)^p\gamma_4(G)$ for $2$-generated groups, which is a rather recent accomplishment, cf.\ Section~\ref{sec:SmallGroupAlgebras}.

The technique of kernel sizes found a few more applications in its generic form using the augmentation ideal and power maps, but not as many as one could have expected from its early successes. It was applied for groups of order $16$ and $32$ \cite{Holvoet68, Holvoet69, Makasikis} and in these cases also often the dimension of the space spanned by the squares of elements in $I(kG)$ was a useful numerical invariant. It was used for a few groups of order $64$ in \cite{HertweckSoriano06} and then again for the case of $p=2$ to finish the proof for $2$-generated groups of nilpotency class $2$ \cite{BrochedelRio21}. In all these cases, it was only with the ground field having $2$ elements. On the other hand, Drensky used a variation of the method (determining if certain kernels are empty or not) to give a proof of (MIP) for groups with center of index $p^2$ over any field \cite{Drensky89} and Bagi\'nski and Caranti used the $p$-power map from $I(kG)/I(kG)^2$ to $I(kG)^p/I(kG)^{p+1}$ to distinguish certain groups of maximal class \cite{BaginskiCaranti88}.

\section{Embedding the group basis in quotients}\label{sec:quotients}

An idea different from the ones described so far is to simplify the search for group bases in $kG$ by considering a quotient $kG/J$, for some ideal $J$ in $kG$, such that any group basis $H$ of $kG$ will map injectively to $kG/J$. Naturally, much depends on the choice of the ideal $J$. This was the approach taken by Whitcomb to solve ($\mathbb{Z}$-IP) for metabelian groups using the ideal $I(\mathbb{Z}G')I(\mathbb{Z}G)$. This ideal was also useful in the study of (MIP) as we show in Section~\ref{sec:SmallGroupAlgebras}, but we start by describing the first idea in this direction.

\subsection{Passi and Sehgal's use of Zassenhaus ideals}\label{sec:LP}

While Jennings considered the series of ideals one gets from the augmentation ideal by powering, the group ring $RG$ carries also a natural Lie structure by the Lie bracket
\[  [\ . \ , \ . \ ]: RG \times RG \rightarrow RG, \ \ [x,y] = xy - yx. \]

This allows to define another filtration of the augmentation ideal given by 
\[I^{[1]}(RG) = I(RG) \ \  \text{and}  \ \ I^{[n]}(RG) = [I^{[n-1]}(RG), I(RG)].\]
By induction it is not difficult to show that by adding the next power of the augmentation ideal this series is connected to the lower central series of $G$ \cite[Lemma 6.8]{Sehgal78}:
\begin{align}\label{eq:Lien-th}
 I^{[n]}(RG) + I(RG)^{n+1} = I(R\gamma_n(G))RG + I(RG)^{n+1} 
\end{align}
for any positive integer $n$. To connect this series to the dimension subgroup series define
\[L_n(RG) = \prod_{ip^j \geq n} (I^{[i]}(RG))^{p^j} + I(RG)^{n+1}.  \]
The $L_n(RG)$ are ideals of $RG$ which were first considered by Zassenhaus \cite{Zassenhaus39} and are for this reason sometimes called \emph{Zassenhaus ideals}. 

Passi and Sehgal realized that in the situation of (MIP) and when one works over the field with $p$ elements these ideals can be used to recognize the $n$-th dimension subgroup of $G$ inside $kG$ up to the following power of the augmentation ideal:

\begin{proposition}\label{prop:Ps72} \cite{PassiSehgal72}
Let $k = \mathbb{F}_p$. Then for any positive integer $n$ one has
\[L_n(kG) = (D_n(G) - 1) + I(kG)^{n+1}. \]
\end{proposition}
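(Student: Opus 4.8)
Since both sides of the asserted identity contain $I(kG)^{n+1}$, the plan is to pass to the quotient $\mathrm{gr}_n := I(kG)^n/I(kG)^{n+1}$ and to prove there that the image of $L_n(kG)$ equals $W_n$, the image of the set $\{g-1 : g\in D_n(G)\}$. Two preliminary reductions make this tractable. First, the identity $(x-1)^{p^j}=x^{p^j}-1$, valid for $x\in G$ in characteristic $p$ (from $\binom{p}{t}\equiv 0$ for $0<t<p$, iterated), together with the degree estimates $I^{[i]}(kG)\subseteq I(kG)^i$ and $I(k\gamma_i(G))kG\subseteq I(kG)^i$ (the latter because $\gamma_i(G)\subseteq D_i(G)$), shows that raising to the $p^j$-th power induces a well-defined map $\mathrm{gr}_i\to\mathrm{gr}_n$ whenever $ip^j=n$: a representative may be altered by $I(kG)^{i+1}$ without changing the class of its $p^j$-th power, since the cross terms in the expansion lie in $I(kG)^{(p^j-1)i+(i+1)}=I(kG)^{n+1}$. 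Second, \eqref{eq:Lien-th} identifies the image of $I^{[i]}(kG)$ in $\mathrm{gr}_i$ with that of the relative augmentation ideal $I(k\gamma_i(G))kG$, which modulo $I(kG)^{i+1}$ reduces to the span of the classes $\overline{w-1}$ with $w\in\gamma_i(G)$, i.e. to $W_i$.

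For the inclusion $W_n\subseteq L_n(kG)$ I would use the closed formula $D_n(G)=\prod_{ip^j\ge n}\gamma_i(G)^{p^j}$. Because $g\mapsto\overline{g-1}$ is additive on $D_n(G)$ modulo $I(kG)^{n+1}$ (again since $(g-1)(h-1)\in I(kG)^{2n}\subseteq I(kG)^{n+1}$ for $n\ge 1$), the subspace $W_n$ is spanned by the classes of $x^{p^j}-1$ with $x\in\gamma_i(G)$ and $ip^j\ge n$. For such an element, $x^{p^j}-1=(x-1)^{p^j}$; writing $x-1=y+z$ with $y\in I^{[i]}(kG)$ and $z\in I(kG)^{i+1}$ via \eqref{eq:Lien-th} and discarding the cross terms as above, one obtains $(x-1)^{p^j}\equiv y^{p^j}\pmod{I(kG)^{n+1}}$, and $y^{p^j}$ is a generator of $(I^{[i]}(kG))^{p^j}\subseteq L_n(kG)$. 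Hence $W_n\subseteq L_n(kG)$.

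For the reverse inclusion it suffices to treat a generator $x^{p^j}$ with $x\in I^{[i]}(kG)$ and $ip^j\ge n$. If $ip^j\ge n+1$ then $x^{p^j}\in I(kG)^{ip^j}\subseteq I(kG)^{n+1}$ and there is nothing to prove, so assume $ip^j=n$. By the first reduction the class of $x^{p^j}$ depends only on $\overline{x}\in\mathrm{gr}_i$, and by the second reduction $\overline{x}\in W_i$; thus it remains to show that the $p^j$-th power map sends $W_i$ into $W_n$. On the pure generators this is immediate: $\overline{(w-1)^{p^j}}=\overline{w^{p^j}-1}\in W_n$ since $w^{p^j}\in\gamma_i(G)^{p^j}\subseteq D_n(G)$. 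The main obstacle is to extend this from the generators $\overline{w-1}$ to an arbitrary element of the subspace $W_i$, for which I would invoke Theorem~\ref{th:Jennings} in its structural form: it identifies $\mathrm{gr}(kG)$ with the restricted enveloping algebra of the restricted Lie algebra $\bigoplus_m D_m(G)/D_{m+1}(G)$, in which the algebra $p$-th power restricts to the Lie $p$-operation and therefore carries the degree-$i$ component $W_i$ into the degree-$pi$ component. Concretely, the deviation of Frobenius from additivity on $\mathrm{gr}(kG)$ is governed by Jacobson's formula, whose correction terms are iterated Lie brackets; under a Jennings basis these correspond to group commutators, which again land in the relevant $\gamma$-terms and hence in $W_n$. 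Iterating $j$ times yields $W_i\to W_n$ and completes the argument. The delicate point throughout is precisely this restricted-Lie behaviour of the $p$-power map, which is what forces the appearance of $D_n(G)$ rather than a larger part of $\mathrm{gr}_n$.
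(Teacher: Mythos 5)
Your argument is correct in outline, but it resolves the crux of the proof by a genuinely different mechanism than the paper. Both proofs agree up to the point where one must take the $p^j$-th power of a general element $\alpha\in I^{[i]}(kG)$, written via \eqref{eq:Lien-th} as a $k$-linear combination of elements $w-1$ with $w\in\gamma_i(G)$ modulo $I(kG)^{i+1}$. The paper never confronts the non-additivity of the $p$-power map: using Lemma~\ref{lem:BasicIds} it first collapses the whole linear combination into a \emph{single} term $h-1$ with $h\in\gamma_i(G)$ modulo $I(kG)^{i+1}$, via the identities $r(g-1)\equiv g^{r}-1$ and $(g-1)+(\tilde g-1)\equiv g\tilde g-1$, and only then raises the single binomial $h-1$ to the $p^j$-th power. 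You instead power the linear combination directly and control the cross terms by Jacobson's formula in $\operatorname{gr}(kG)$, using that $\bigoplus_m D_m(G)/D_{m+1}(G)$ sits inside $\operatorname{gr}(kG)$ as a restricted Lie subalgebra, so that the bracket correction terms are iterated group commutators landing in $D_{pi}(G)$. Both routes work; the paper's is more elementary (only Lemma~\ref{lem:BasicIds} is needed), while yours is more structural and makes visible \emph{why} the answer is exactly $D_n(G)$, namely because that series is closed under the relevant commutators and $p$-powers. Two small corrections: the restricted-enveloping-algebra description of $\operatorname{gr}(kG)$ is not part of Theorem~\ref{th:Jennings} as stated (which only gives the monomial basis); it is Quillen's theorem \cite{Quillen68}, discussed in Section~\ref{sec:further}. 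And you should make explicit where $k=\mathbb{F}_p$ enters: it is needed to identify the $k$-span of the classes $\overline{g-1}$, $g\in D_n(G)$, with the \emph{set} $(D_n(G)-1)+I(kG)^{n+1}$ appearing in the statement (equivalently, to absorb scalars via $\lambda(g-1)\equiv g^{\lambda}-1$) --- this is precisely the step the paper flags as the reason the proposition is restricted to the prime field.
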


We sketch the proof. It involves showing that the map
\[\varphi: D_n(G) \rightarrow L_n(kG)/I(kG)^{n+1}, \ \ g \mapsto g-1 + I(kG)^{n+1} \]
is an epimorphism. To see that assume $i$ and $j$ are such that $ip^j \geq n$ and let $\alpha \in I^{[i]}(kG)$. By \eqref{eq:Lien-th} we can write
\[ \alpha = \sum_{g \in \gamma_i(G)} r_g(g-1) + \beta \]
for some coefficients $r_g \in k$ and $\beta \in I(kG)^{i+1}$. Note that for any integer $\ell$ by Lemma~\ref{lem:BasicIds} we get $(g^\ell-1) = \sum_{t=1}^{\ell} \binom{\ell}{t} (g-1)^t$. Abusing notation we identify an integer and its image in $\mathbb{F}_p$. So for any $g \in D_i(G)$ and $r_g \in \mathbb{F}_p$ we get 
\[r_g (g-1) = (g^{r_g}-1) - \sum_{t=2}^\ell \binom{\ell}{t} (g-1)^t \equiv g^{r_g}-1 \bmod I(kG)^{i+1}. \]
Moreover, also from Lemma~\ref{lem:BasicIds} one has for $g, \tilde{g} \in D_i(G)$ that
\[(g-1) + (\tilde{g}-1) = (g\tilde{g}-1) - (g-1)(\tilde{g}-1) \equiv g\tilde{g}-1 \bmod I(kG)^{i+1}. \]
So, we can rewrite 
\[\alpha = h-1 + \tilde{\beta} \]
for some $h \in \gamma_i(G)$ and $\tilde{\beta} \in I(kG)^{i+1}$. Note that we have used $r_g \in \mathbb{F}_p$ for this, which is the reason we have to restrict the statement of the proposition to the prime field. So, if we take the $p^j$-th power we obtain
\[\alpha^{p^j} = h^{p^j}-1 + \delta \]
for some $\delta \in I(kG)^{n+1}$. Note that $h^{p^j} \in \gamma_i(G)^{p^j} \subseteq D_n(G)$. Looking at the definition of $L_n(kG)$ we see that this implies that $\varphi$ is an epimorphism. The inclusion $D_n(G)-1 + I(kG)^{n+1} \subseteq L_n(kG)$ is easy to prove using Lemma~\ref{lem:BasicIds}, so the proposition follows.

This observations allows to obtain new invariants connected to the dimension subgroup series. We illustrate this by an example:

\begin{example} \label{ex:LieComp}
Let $G = \langle a,b \ | \ a^4 = b^2=1, \ a^b = a^{-1} \rangle$ be the dihedral group of order $8$ and $k$ the field of $2$ elements. Note that $D_2(G) = \langle a ^2 \rangle$ and $D_3(G) = 1$. Let $C$ be a subspace of $I(kG)^2$ containing $I(kG)^3$ such that 
\begin{align}\label{eq:Complement}
I(kG)^2/I(kG)^3 = L_2(kG)/I^3 \oplus C/I(kG)^3, 
\end{align}
i.e.\ $C$ is a complement of $L_2(kG)$ in $I(kG)^2/I(kG)^3$. As $C$ lives between two consecutive powers of the augmentation ideal it is an ideal of $kG$. Note that as $I(kG)/I(kG)^2$ is $2$-dimensional and $L_2(kG)/I(kG)^3$ is $1$-dimensional by Proposition~\ref{prop:Ps72}, the quotient $I(kG)/C$ is $3$-dimensional. From Jennings' theory and again Proposition~\ref{prop:Ps72} we know that the image of $\{1, (a-1) , (b-1), (a^2-1) \}$  in $kG/C$ is a basis of $kG/C$, implying that $V(kG/C) \cong G$.

Now assume that $\varphi: kG \rightarrow kH$ is an isomorphism of rings. As the augmentation ideal is the radical of $kG$ we have $\varphi(I(kG)) = I(kH)$ and this transfers also to powers of the augmentation ideal and the ideals $L_n(kG)$ which are defined only in terms of $I(kG)$. Hence \eqref{eq:Complement} transfers via $\varphi$ to
\[I(kH)^2/I(kH)^3 = L_2(kH)/I^3 \oplus \varphi(C)/I(kH)^3,  \]
i.e.\ the property of being a complement of $L_2$ is a property which remains unchanged under isomorphisms. From Jennings' theory we know that $D_3(H) = 1$ and arguing as before we then get $V(kH/\varphi(C)) \cong H$, implying
\[G \cong V(kG/C) \cong V(kH/\varphi(C)) \cong H. \]
\end{example}

With arguments generalizing this example Passi and Sehgal proved:
\begin{theorem}\label{th:LP72}\cite{PassiSehgal72}
Let $k = \mathbb{F}_p$. Then the isomorphism type of the quotient $D_n(G)/D_{n+2}(G)$ is an invariant of $kG$ for any positive integer $n$. In particular, if $D_3(G) = 1$, then (MIP) has a positive answer for $G$.
\end{theorem}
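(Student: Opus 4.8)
The plan is to build on Example~\ref{ex:LieComp}, turning the computation carried out there for the dihedral group of order $8$ into a construction that works for an arbitrary $G$ and, crucially, \emph{without} the hypothesis $D_3(G)=1$. The only features of $kG$ I will use are ring-theoretic: the augmentation ideal $I(kG)$ is the Jacobson radical, so any isomorphism $\varphi\colon kG\to kH$ satisfies $\varphi(I(kG)^m)=I(kH)^m$ for all $m$; and the Zassenhaus ideals $L_m(kG)$ are assembled from $I(kG)$ using only sums, products, Lie brackets and $p$-th powers, so $\varphi(L_m(kG))=L_m(kH)$ as well. Consequently any ideal defined purely in terms of the $I(kG)^m$ and the $L_m(kG)$, together with any of its properties, is transported verbatim by $\varphi$.

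The substantive case is $n=1$, where $D_1(G)/D_3(G)=G/D_3(G)$ need not even be abelian. Here I would choose, exactly as in Example~\ref{ex:LieComp}, a subspace $C$ with $I(kG)^3\subseteq C\subseteq I(kG)^2$ and
\[ I(kG)^2/I(kG)^3 = L_2(kG)/I(kG)^3 \oplus C/I(kG)^3; \]
lying between two consecutive powers of $I(kG)$, $C$ is automatically a two-sided ideal. The goal is to prove $V(kG/C)\cong G/D_3(G)$, now dropping the assumption $D_3(G)=1$. Consider the homomorphism $G\to V(kG/C)$, $g\mapsto \bar g$; its kernel is $G\cap(1+C)$. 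If $g-1\in C\subseteq I(kG)^2$ then $g\in D_2(G)$, whence $g-1\in (D_2(G)-1)\subseteq L_2(kG)$ by Proposition~\ref{prop:Ps72}, so $g-1\in C\cap L_2(kG)=I(kG)^3$ and $g\in D_3(G)$; conversely $D_3(G)-1\subseteq I(kG)^3\subseteq C$. Hence the image is isomorphic to $G/D_3(G)$. An order count using the Jennings basis (Theorem~\ref{th:Jennings}) then closes the argument: writing $d_i=\dim_{\mathbb{F}_p}D_i(G)/D_{i+1}(G)$, one finds $\dim(I(kG)/C)=d_1+d_2$, so $|V(kG/C)|=p^{d_1+d_2}=|G/D_3(G)|$, forcing the image of $G$ to fill out all of $V(kG/C)$.

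With this in hand the invariance for $n=1$ is immediate: since $\varphi$ preserves $L_2$, $I(kG)^2$ and $I(kG)^3$, the subspace $\varphi(C)$ is again a complement of $L_2(kH)$ in $I(kH)^2/I(kH)^3$, so $\varphi$ descends to an isomorphism $kG/C\cong kH/\varphi(C)$ and
\[ G/D_3(G)\cong V(kG/C)\cong V(kH/\varphi(C))\cong H/D_3(H). \]
For $n\ge 2$ no further work is needed: the $p$-restricted property gives $D_n(G)^p\subseteq D_{np}(G)\subseteq D_{n+2}(G)$ and $[D_n(G),D_n(G)]\subseteq D_{2n}(G)\subseteq D_{n+2}(G)$, so $D_n(G)/D_{n+2}(G)$ is elementary abelian and its isomorphism type is pinned down by its order $|D_n(G)/D_{n+1}(G)|\cdot|D_{n+1}(G)/D_{n+2}(G)|$, already an invariant by Corollary~\ref{cor:DimQuots}. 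Finally, the ``in particular'' follows because $D_3(G)=1$ forces $D_3(H)=1$ (all quotients $D_i(G)/D_{i+1}(G)$ with $i\ge 3$ vanish, and these are invariants by Corollary~\ref{cor:DimQuots}), so for such $G$ the $n=1$ statement reads precisely $G\cong H$.

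The main obstacle is the kernel computation, i.e.\ the inclusion $G\cap(1+C)\subseteq D_3(G)$. Everything hinges on identifying $(D_2(G)-1)$ inside $I(kG)^2$ with a subspace of $L_2(kG)$, and this is exactly the content of Proposition~\ref{prop:Ps72}, whose proof forces the coefficient field to be the prime field $\mathbb{F}_p$; it is here, and essentially only here, that the restriction $k=\mathbb{F}_p$ is genuinely used. The complementary order count is routine once the Jennings basis is available, but one must keep $C$ strictly between two consecutive powers of $I(kG)$, so that $C$ is an ideal and so that the identity $C\cap L_2(kG)=I(kG)^3$ holds on the nose.
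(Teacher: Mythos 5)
Your proposal is correct and is essentially the generalization of Example~\ref{ex:LieComp} that the paper itself gestures at (the survey only presents the $D_3(G)=1$ example and asserts that the theorem follows ``with arguments generalizing this example''): you use the same complement $C$ of $L_2(kG)$ between $I(kG)^2$ and $I(kG)^3$, the same basis-independence of $I(kG)^m$ and $L_m(kG)$, and Proposition~\ref{prop:Ps72} in exactly the role the paper assigns to it. Your kernel computation $G\cap(1+C)=D_3(G)$, the Jennings-basis order count, and the reduction of the $n\ge 2$ case to Corollary~\ref{cor:DimQuots} via elementary abelianness of $D_n(G)/D_{n+2}(G)$ all check out.
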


An improvement of this result using a more delicate generalization of the arguments in Example~\ref{ex:LieComp} was obtained by Furukawa \cite{Furukawa81} and independently by Ritter and Sehgal \cite{RitterSehgal83}.

\begin{theorem} \cite{Furukawa81, RitterSehgal83} 
Let $k = \mathbb{F}_p$. Then the isomorphism type of the quotient $D_n(G)/D_{2n+1}(G)$ is an invariant of $kG$ for every positive integer $n$.
\end{theorem}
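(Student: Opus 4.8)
The plan is to promote the single-layer complement construction of Example~\ref{ex:LieComp} to the whole band of degrees between $n$ and $2n$, and to read off $D_n(G)/D_{2n+1}(G)$ as a group of units in a canonical quotient. Throughout fix $k=\mathbb{F}_p$ and a ring isomorphism $\varphi\colon kG\to kH$; as the augmentation ideal is the radical and the Zassenhaus ideals are built from the radical and the Lie bracket, $\varphi$ carries $I(kG)^m$ to $I(kH)^m$ and $L_m(kG)$ to $L_m(kH)$ for all $m$. The first step is to realise the target intrinsically. Since $L_n(kG)$ is an ideal contained in the nilpotent ideal $I(kG)$, the set $1+L_n(kG)$ is a subgroup of $V(kG)$, and Proposition~\ref{prop:Ps72} yields the clean identity $1+L_n(kG)=D_n(G)\cdot\bigl(1+I(kG)^{n+1}\bigr)$: indeed $1+\ell\mapsto \ell+I(kG)^{n+1}$ is a homomorphism onto $L_n(kG)/I(kG)^{n+1}\cong D_n(G)/D_{n+1}(G)$ with kernel $1+I(kG)^{n+1}$, and $D_n(G)$ already surjects onto this quotient. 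Consequently the image of $D_n(G)$ in the canonical finite $p$-group $Q_n:=(1+L_n(kG))/(1+I(kG)^{2n+1})$ is isomorphic to $D_n(G)/D_{2n+1}(G)$, because $D_n(G)\cap(1+I(kG)^{2n+1})=D_{2n+1}(G)$.

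Next I would carve this image out by a complement. Choose any $\mathbb{F}_p$-subspace $C$ with $I(kG)^{2n+1}\subseteq C\subseteq I(kG)^{n+1}$ whose image in each layer $I(kG)^m/I(kG)^{m+1}$, for $n+1\le m\le 2n$, is a complement to $L_m(kG)/I(kG)^{m+1}\cong D_m(G)/D_{m+1}(G)$; such a $C$ exists by linear algebra, one layer at a time. The decisive point is that $1+C$ is automatically a subgroup of $V(kG)$ that is normalised by $1+L_n(kG)$, with no compatibility condition linking the layerwise choices. This is a pure degree count using the containments $[D_i(G),D_j(G)]\subseteq D_{i+j}(G)$ and $D_i(G)^p\subseteq D_{ip}(G)$ from the $p$-restricted $N$-series together with Lemma~\ref{lem:BasicIds}: for $c,c'\in C\subseteq I(kG)^{n+1}$ one has $cc'\in I(kG)^{2n+2}\subseteq C$, while for $g\in D_n(G)$ and $c$ in layer $m\ge n+1$ the conjugation defect $gcg^{-1}-c$ lies in $I(kG)^{n+m}\subseteq I(kG)^{2n+1}\subseteq C$. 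Thus every error is swallowed by $I(kG)^{2n+1}\subseteq C$, and the quotient $\widetilde{Q}:=(1+L_n(kG))/(1+C)$ is a well-defined group.

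Now a dimension count finishes the identification. From $\dim_{\mathbb{F}_p}L_n(kG)=\dim_{\mathbb{F}_p}I(kG)^{n+1}+\dim_{\mathbb{F}_p}\bigl(D_n(G)/D_{n+1}(G)\bigr)$ and the layerwise complement property one computes $\dim_{\mathbb{F}_p}L_n(kG)-\dim_{\mathbb{F}_p}C=\sum_{m=n}^{2n}\dim_{\mathbb{F}_p}\bigl(D_m(G)/D_{m+1}(G)\bigr)$, so $|\widetilde{Q}|=|D_n(G)/D_{2n+1}(G)|$. On the other hand $D_n(G)\cap(1+C)=D_{2n+1}(G)$, since an element of $D_m(G)\setminus D_{m+1}(G)$ with $n\le m\le 2n$ has leading term in the $L_m$-direction and hence cannot lie in $C$; therefore the image of $D_n(G)$ in $\widetilde{Q}$ already has order $|D_n(G)/D_{2n+1}(G)|$ and fills $\widetilde{Q}$. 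Hence $\widetilde{Q}\cong D_n(G)/D_{2n+1}(G)$ for \emph{every} admissible $C$. Since the admissible subspaces are specified purely through the canonical ideals $I(kG)^m$ and $L_m(kG)$, the image $\varphi(C)$ is admissible for $kH$, and $\varphi$ induces isomorphisms $\widetilde{Q}\cong(1+L_n(kH))/(1+\varphi(C))\cong D_n(H)/D_{2n+1}(H)$, which (together with Corollary~\ref{cor:DimQuots} for the graded pieces) proves the theorem.

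I expect the genuine work to be the normality and subgroup verification of the second paragraph, that is, the precise bookkeeping showing that all multiplicative and conjugation defects land in $I(kG)^{2n+1}$. This is also where the bound $2n+1$ is sharp and transparent. The conjugation defect of $g\in D_n(G)$ on the layer of degree $m$ sits in degree $n+m$, which is $\ge 2n+1$ exactly when $m\ge n+1$, i.e.\ for \emph{all} layers that $C$ touches; so with cutoff $2n+1$ there is no ``middle'' layer imposing a constraint and the layerwise choices stay free. Raising the cutoff to $2n+2$ reintroduces the layer $m=n+1$, whose conjugation defect lands in degree $2n+1$, strictly below the new cutoff; the subspace $C$ can then no longer be chosen layer by layer without a genuine compatibility condition, $1+C$ need not be canonical, and the argument breaks. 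This matches the fact that $D_n(G)/D_N(G)$ cannot be an invariant for all $N$, as that would force a positive answer to (MIP) in general.
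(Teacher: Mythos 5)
Your argument is correct, and it is precisely the ``more delicate generalization of the arguments in Example~\ref{ex:LieComp}'' that the survey attributes to Furukawa and Ritter--Sehgal without reproducing a proof: the single-layer complement of $L_2(kG)$ in that example is replaced by your band of layerwise complements of the Zassenhaus ideals $L_m(kG)$ for $n+1\le m\le 2n$, with all multiplicative and conjugation defects absorbed by $I(kG)^{2n+1}$. The identification of $(1+L_n(kG))/(1+C)$ with $D_n(G)/D_{2n+1}(G)$ via Proposition~\ref{prop:Ps72}, the order count, and the transport of the admissible subspace $C$ along the isomorphism are all sound, so nothing essential is missing.
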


Some improvements of this result seem to have been attempted in the 1980's and are mentioned in Sandling's survey \cite[6.22 Theorem]{SandlingSurvey}, but these results were never published.

While here we view the idea of Passi and Sehgal as a first instance of embedding group bases, it could also be understood in the context of normal complements. Note that in Example~\ref{ex:LieComp} we actually show that there is a normal subgroup $N$ of $V(kG)$ such that $V(kG)/N \cong G$. Using that $N$ is defined via a property not depending on the group basis, we then get that any group basis of $kG$ is isomorphic to $G$. We include more details on the generalization of this view in Section~\ref{sec:NormalComplements}. Here we just mentioned that this is essentially the idea used by Hertweck to improve the result of Passi and Sehgal in \cite{Hertweck07}, showing that the isomorphism type of $G/D_4(G)$ is an invariant of $kG$ for $k = \mathbb{F}_p$ and $p$ an odd prime. 

We note that in the counterexamples to (MIP) the smallest $n$ such that $G/D_n(G) \not\cong H/D_n(H)$ is $9$. This leads to:

\begin{problem}
Determine the smallest number $n$ such that $G/D_n(G) \not\cong H/D_n(H)$ holds for $G$ and $H$ two $p$-groups with isomorphic modular group algebras over some field of characteristic $p$. 
\end{problem}

\subsection{Small group algebras}\label{sec:SmallGroupAlgebras}

Note that the invariant $G/D_3(G)$ from the last paragraph equals $G/G^4\gamma_2(G)^2\gamma_3(G)$ in case $p = 2$ and $G/G^p\gamma_3(G)$ in case $p$ is odd. This invariant was generalized by Sandling based on his explicit description of the unit group of the modular group algebra $\mathbb{F}_pG$ for an abelian group $G$ \cite{Sandling84}. This description can also be used to provide an alternative proof of Deskins' result for the field with $p$ elements, as it shows that the unit groups are different for non-isomorphic group bases. But Sandling did not only determine the isomorphism type of the unit group, but provided an explicit independent generating set which is also a main ingredient in the first use of the small group algebra for the study of (MIP).

Let $J = I(kG)I(kG')$ which is clearly an ideal of $kG$. Then $kG/J$ is called the \emph{small group algebra} of $kG$. Note that $I(kG'')$ as well as $I(k(G')^p)$ is contained in $J$, thus we can only hope to obtain information on the quotient $G/\Phi(G')$ when we utilize the small group algebra. Hence, we assume that $\Phi(G') = 1$ to explain Sandling's idea. If we set $S = V(kG/J)$, then we get a natural embedding of $G'$ in $S$ and a short exact sequence
\[ 1 \longrightarrow G' \longrightarrow S\overset{\alpha}{\longrightarrow} V(k(G/G') )\longrightarrow 1.\]
Moreover, $\alpha^{-1}(G/G')$ is isomorphic to $G$ and we will identify $G$ with this preimage. Now for $k = \mathbb{F}_p$ from \cite{Sandling84} we get that the the group $V(kG/G')$ has the shape $G/G' \times C$ and also an explicit independent generating set for $C$. If $A$ is a preimage of $C$, then certainly $S = GA$. The explicit generating set for $C$ also provides us with a generating set for $A$ and hence the structure of $S$ is given quite explicitly. When $\gamma_4(G) = 1$ this structure implies that $A$ is isomorphic to $C$, in particular abelian, and a complement of $G$ in $S$. When $\gamma_3(G)=1$, then the action of $A$ on $G$ is even trivial and so we get:

\begin{theorem}\label{th:Sandling89} \cite{Sandling89}
For $k = \mathbb{F}_p$ the isomorphism type of $G/(G')^p\gamma_3(G)$ is an invariant of $kG$. In particular, (MIP) holds for groups of nilpotency class $2$ with elementary abelian derived subgroup.
\end{theorem}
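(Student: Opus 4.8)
The plan is to realise $G/(G')^p\gamma_3(G)$ inside the normalized unit group of the small group algebra $kG/J$ and to argue that the latter is pinned down by the ring structure of $kG$. First I would observe that $J=I(kG)\,I(kG')kG$ is defined purely ring-theoretically: $I(kG)$ is the Jacobson radical of $kG$, while $I(kG')kG$ is the smallest ideal of $kG$ with commutative quotient, as recorded in Section~\ref{sec:maps}. Hence any isomorphism $kG\cong kH$ carries $J$ to the analogous ideal of $kH$ and induces $kG/J\cong kH/J$; in particular $S:=V(kG/J)$ is an invariant of $kG$. Next, using the identities of Lemma~\ref{lem:BasicIds}, I would verify that $J$ already contains $I(k\Phi(G'))kG$: for $a,b\in G'$ one has $ab-ba=(a-1)(b-1)-(b-1)(a-1)\in(I(kG')kG)^2\subseteq J$, whence $[a,b]-1\in J$, and for $x\in G'$ the identity $x^p-1=(x-1)^p\in(I(kG')kG)^p\subseteq J$ kills $(G')^p$. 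Thus the small group algebra of $G$ coincides with that of $G/\Phi(G')$, and since $(G')^p\gamma_3(G)=\Phi(G')\gamma_3(G)$ (because $G''\subseteq\gamma_3(G)$) we may replace $G$ by $G/\Phi(G')$ and assume from now on that $G'$ is elementary abelian; the target then becomes $G/\gamma_3(G)$.

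With $G'$ elementary abelian I would use the short exact sequence set up in the preamble to this subsection,
\[1\longrightarrow G'\longrightarrow S\overset{\alpha}{\longrightarrow}V(k(G/G'))\longrightarrow 1,\]
where $\alpha$ is induced by $G\to G/G'$ and $G$ is identified with $\alpha^{-1}(G/G')$. The decisive input is Sandling's explicit description \cite{Sandling84} of the unit group of the modular group algebra of the \emph{abelian} group $G/G'$, which yields $V(k(G/G'))=G/G'\times C$ together with an independent generating set of the complement $C$. Lifting these generators to $S$ produces a subgroup $A$ with $S=GA$ and $G\cap A\subseteq G'$, and the explicit generators of $C$ translate into explicit generators of $A$. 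Since $G/G'$ is itself an invariant of $kG$ by Theorem~\ref{th:AbAndCent}, the abstract data describing $C$, and hence $A$, is the same for every group basis.

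It then remains to compute, inside $kG/J$, the commutators $[g,a]$ for $g\in G$, $a\in A$, together with the $p$-th powers of the lifted generators, and to show that these are governed entirely by the fixed multiplication of the small group algebra and by the invariant $G/G'$, so that the isomorphism type of $S$ determines $G/\gamma_3(G)$ and nothing coarser. When $\gamma_3(G)=1$ these commutators vanish, $A$ centralises $G$, and $S\cong G\times A$ with $A\cong C$ determined by $G/G'$, so that $G$ itself is recovered; for the general invariant the same computation returns $G/\gamma_3(G)$ after discarding the contributions landing in $\gamma_3(G)$. The ``in particular'' is then formal: if $G$ has class $2$ with $G'$ elementary abelian then $G/(G')^p\gamma_3(G)=G$, and for any $H$ with $kH\cong kG$ we obtain $H/(H')^p\gamma_3(H)\cong G$; as $|H|=|G|$ is an invariant, this forces $(H')^p\gamma_3(H)=1$ and hence $H\cong G$.

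The hard part will be the last computation: verifying that Sandling's generators of $C$ lift to elements of $A$ whose commutators with $G$ and whose $p$-th powers are expressible through invariant data alone, so that $S$ reconstructs exactly $G/\gamma_3(G)$. This is precisely where the concrete generating set for the abelian unit group from \cite{Sandling84} is indispensable, and it is also the step that forces the restriction to the prime field $k=\mathbb{F}_p$ — just as the prime-field hypothesis was needed in Proposition~\ref{prop:Ps72} and Theorem~\ref{th:LP72}.
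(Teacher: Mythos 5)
Your proposal follows essentially the same route as the paper's own account in Section~\ref{sec:SmallGroupAlgebras}: recognize $J=I(kG)I(kG')kG$ ring-theoretically via the radical and the smallest ideal with commutative quotient, reduce to $\Phi(G')=1$, and analyse $S=V(kG/J)$ through the exact sequence $1\to G'\to S\to V(k(G/G'))\to 1$ together with Sandling's explicit independent generating set for the unit group of the abelian quotient, concluding via $S\cong G\times A$ when $\gamma_3(G)=1$. Like the survey's sketch, you defer the decisive computation with those generators (commutators $[g,a]$ and $p$-th powers of the lifts, which is also where the restriction to $k=\mathbb{F}_p$ enters) to \cite{Sandling84, Sandling89}, so the approach and its reductions coincide with the paper's.
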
 

But also in the case that $\gamma_3(G) \neq 1$, one still can obtain information on the group bases of $kG$, as they all embed in $S$ (again assuming $\Phi(G') = 1$). So one idea is to study subgroups of $S$ which might come from a group basis and determine whether these are necessarily isomorphic to $G$. This is especially useful when $\gamma_4(G) = 1$ and was one of the main ingredients of Salim and Sandling's proof of (MIP) for groups of order $p^5$ for odd $p$ \cite{SalimSandlingp5}. They showed that for odd $p$ and $G$ a group of order $p^5$ which satisfies $(G')^p\gamma_4(G) = 1$, up to isomorphism $G$ is the only subgroup of $S$ which shares all the known invariants of $kG$ with $G$. This was also shown to be true if $(G')^p\gamma_4(G) = 1$ and $C_G(G')$ is a maximal subgroup of $G$ and abelian, again for $p$ odd \cite{MargolisStanojkovski21}. In the last mentioned paper also some more classes were shown to have these properties which in particular can be used to solve (MIP) for many groups of order $p^6$ and $p^7$. Furthermore, the following invariant can be obtained:

\begin{theorem} 
Let $k = \mathbb{F}_p$ and assume that $G$ is $2$-generated. Then the isomorphism type of $G/(G')^p\gamma_4(G)$ is an invariant of $kG$.
\end{theorem}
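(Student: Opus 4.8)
The plan is to reduce the statement to Sandling's small group algebra (Theorem~\ref{th:Sandling89} and the surrounding discussion) and then to exploit that a $2$-generated $p$-group has an exceptionally rigid lower central structure. Write $\bar G = G/(G')^p\gamma_4(G)$. Since $G'' = [\gamma_2(G),\gamma_2(G)] \subseteq \gamma_4(G)$, the quotient $\bar G$ has elementary abelian derived subgroup, i.e.\ $\Phi(\bar G') = 1$, and it is nilpotent of class at most $3$; moreover it is $2$-generated, being a quotient of $G$. Because $\Phi(G') = (G')^pG'' \subseteq (G')^p\gamma_4(G)$, the quotient $\bar G$ and the small group algebra $kG/J$, with $J = I(kG)I(kG')$, are both unchanged if we first replace $G$ by $G/\Phi(G')$. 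Now $kG/J$ is an invariant of $kG$: the factor $I(kG)$ is the radical of $kG$, the factor $I(kG')kG$ is the smallest ideal with commutative quotient, and hence $J$ is recognised from the ring structure alone. It therefore suffices to show that a $2$-generated group of class at most $3$ with elementary abelian derived subgroup is determined up to isomorphism by its small group algebra, and from now on I assume $G = \bar G$.

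With $\gamma_4(G) = 1$ and $\Phi(G') = 1$, put $S = V(kG/J)$ and recall the canonical exact sequence
\[
1 \longrightarrow G' \longrightarrow S \xrightarrow{\ \alpha\ } V(k(G/G')) \longrightarrow 1,
\]
in which $G'$ sits as a canonical normal subgroup. Sandling's explicit generators for $V(k(G/G')) = (G/G') \times C$ produce, after choosing a preimage $A$ of $C$, a decomposition $S = G \rtimes A$ with $A$ abelian. The $2$-generator hypothesis now makes the relevant group-theoretic data minimal: $G/G'$ has rank $2$, the quotient $\gamma_2(G)/\gamma_3(G)$ is cyclic (generated by the image of a single commutator $[x,y]$ of a generating pair $x,y$), and $\gamma_3(G)$ is elementary abelian of rank at most $2$. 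Consequently the isomorphism type of $G$ is encoded in a short list of structure constants: the orders of the two generators modulo $G'$, the order of $\gamma_2/\gamma_3$, the rank of $\gamma_3$, and the coordinates, in a fixed basis of $\gamma_3$, of the element $[x,y]^p$, of the basic commutators $[x,y,x]$ and $[x,y,y]$, and of the relevant $p$-th powers of the generators.

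The plan is to read all of these constants off the fixed group $S$. The abelianization invariant (Theorem~\ref{th:AbAndCent}) fixes $G/G'$, and the invariants of Theorem~\ref{th:OmegaAndAgemo} together with the dimension-subgroup quotients of Corollary~\ref{cor:DimQuots} fix the order of $\gamma_2/\gamma_3$, the rank of $\gamma_3$, and the power-structure of the generators. The decisive remaining data, namely the $\gamma_3$-coordinates of $[x,y]^p$ and of the commutators that separate non-isomorphic groups sharing all previous invariants, are to be recovered from the action of the abelian complement $A$ on the canonical subgroup $G'$ inside $S$: using the identities of Lemma~\ref{lem:BasicIds} one computes $[A,G]$ and the induced $A$-action on $G'$, and these pin down the missing coordinates. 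The hard part is exactly this last step. For a second group basis $H$ of $kG$ one obtains, inside the same group $S$, a decomposition $S = H \rtimes A'$, and one must show the associated constants agree with those of $G$. Here the rigidity forced by $2$-generation is essential: because $\gamma_2/\gamma_3$ is cyclic and $\gamma_3$ has rank at most $2$, the complement $A$ and the associated action vary only within a very small module, and I expect to verify that changing the complement alters the structure constants only by an automorphism of $G$, forcing $H \cong G$. Reassembling the recovered constants then reconstructs the isomorphism type of $\bar G$.
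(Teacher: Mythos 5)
First, note that the survey does not actually contain a proof of this theorem: it records that the statement was observed in \cite{Baginski99} and that a proof was later supplied in \cite{MargolisMoede}, and the surrounding text only indicates that the small group algebra $kG/I(kG)I(kG')$ is the relevant tool. Your choice of that tool is therefore consistent with the intended method, but your reduction contains a genuine gap. Replacing $G$ by $G/\Phi(G')$ is legitimate, since the small group algebra is canonically determined by $kG$ and coincides with that of $G/\Phi(G')$. The further step ``from now on I assume $G=\bar G$'', i.e.\ $\gamma_4(G)=1$, is not: nothing canonical in $kG$, nor in $kG/J$, is known to cut out $(G')^p\gamma_4(G)$, and producing such a canonical object is essentially the content of the theorem you are trying to prove. (Compare the paper's remark that even $I(k\gamma_3(G))kG$ is \emph{not} independent of the group basis as a subset of $kG$, and that no subquotient of the lower central series beyond $G/G'$ is known to be an invariant.) Since $\gamma_4(G)$ survives in $S=V(kG/J)$ when $\Phi(G')=1$, the argument must recover $G/\gamma_4(G)$ from $S$ without assuming $\gamma_4(G)=1$; in particular your appeal to an abelian complement $A$ of $G$ in $S$ is only available in the case $\gamma_4(G)=1$, which is exactly the case you are not entitled to restrict to.

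Second, the step you yourself flag as ``the hard part'' --- that a second group basis $H$, embedded in $S$ with its own complement $A'$, produces the same structure constants up to an automorphism --- is not carried out, and it is precisely where the mathematical content lies. The paper explicitly warns that $S$ does not determine $G$ even under the hypothesis $(G')^p\gamma_4(G)=1$ (there are non-isomorphic groups of order $p^6$ with isomorphic small group algebras), so the $2$-generation hypothesis must enter in an essential computational way, not merely by shortening the list of structure constants. Relatedly, your claim that Corollary~\ref{cor:DimQuots} and Theorem~\ref{th:OmegaAndAgemo} already fix the order of $\gamma_2(G)/\gamma_3(G)$ and the rank of $\gamma_3(G)$ is unsubstantiated: dimension subgroups mix powers with commutators, and separating $\gamma_i(G)$ from $D_i(G)$ is one of the recognised difficulties emphasized in the survey. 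As it stands the proposal is a plausible plan in the spirit of Section~\ref{sec:SmallGroupAlgebras}, but not a proof.
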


This was first noticed in \cite{Baginski99}, though without a proof which was later provided in \cite{MargolisMoede}.

Nevertheless, non-isomorphic groups with isomorphic small group algebras are also known. The first to appear publicly seem to be in \cite{Baginski99}, though there the groups do not satisfy $\gamma_4(G) = 1$. But even when one assumes that $(G')^p\gamma_4(G) =1$ the group $S$ does not determine $G$, not even for groups of order $p^6$, cf.\ the explicit example \cite[Example 3.11]{MargolisStanojkovski21}. So Sandling's theorem cannot be generalized in this case using similar methods and neither can the strategy of Salim and Sandling be extended in this way from groups of order $p^5$ to groups of order $p^6$. We refer to the comments after \cite[Example 3.11]{MargolisStanojkovski21} for more references to examples of isomorphic small group algebras.

A very general reinterpretation of the small group algebra was given by Hertweck and Soriano in \cite{HertweckSorianoFrattini}. They call a group extension
\[1 \longrightarrow N \longrightarrow G \longrightarrow G/N \longrightarrow 1 \]
a \emph{central Frattini extension} if $N$ is elementary abelian, central in $G$ and contained in $\Phi(G)$. They study the isomorphism type of $kG/I(kG)I(kN)$ and compare it to other central Frattini extensions with isomorphic normal subgroup $N$ and quotient group $G/N$. A criterion is obtained which can be used to determine the possible isomorphism of such quotients of group algebras, which is formulated in terms of the automorphism group of $k(G/N)$. Though this criterion could be applied in any given situation, the structure of the automorphism group of a group algebra is rather poorly understood which can make the application complicated in a practical situation. Nevertheless, this might be a way worth exploring. Note that when $G'$ is elementary abelian and $G$ of nilpotency class $2$, then choosing $N = G'$ one gets a central Frattini extension, so that this generalizes Sandling's Theorem~\ref{th:Sandling89}. 

The small group algebra played also a role in the solution of (MIP) in the metacyclic case. A first useful invariant here are the subsequent quotients of dimension subgroups of the derived subgroup:

\begin{proposition}
The isomorphism types of the quotients $D_i(G')/D_{i+1}(G')$ are invariants of $kG$ for any $i$ and field $k$. In particular, the isomorphism type of $G'$ is an invariant if $G'$ is abelian.
\end{proposition}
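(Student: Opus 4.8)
The plan is to read off the quotients $D_i(G')/D_{i+1}(G')$ from the relative augmentation ideal $J := I(kG')kG$ and Jennings' theory applied to the $p$-group $G'$. The starting point is that, as recorded just before Theorem~\ref{th:AbAndCent}, $J$ is the smallest two-sided ideal of $kG$ with commutative quotient, a characterisation that is independent of any group basis. Hence an algebra isomorphism $\varphi\colon kG\to kH$ sends $I(kG')kG$ to $I(kH')kH$ and induces isomorphisms of the graded pieces $J^m/J^{m+1}$ for every $m$; in particular each $\dim_k J^m/J^{m+1}$ is an invariant of $kG$.

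Next I would compute these dimensions in terms of $G'$. Since $G'$ is normal, conjugation by elements of $G$ preserves $I(kG')$, so in a product of elements of $J$ one can move all $kG'$-factors to the left; a short induction then yields $J^m = I(kG')^m\,kG$. Choosing a transversal $T$ of $G'$ in $G$ and using that $kG=\bigoplus_{t\in T}kG'\,t$ is free as a left $kG'$-module gives $I(kG')^m kG=\bigoplus_{t\in T}I(kG')^m\,t$, and therefore
\[ \dim_k J^m/J^{m+1} = [G:G']\cdot \dim_k\big(I(kG')^m/I(kG')^{m+1}\big). \]
As $[G:G']=|G/G'|$ is an invariant by Theorem~\ref{th:AbAndCent}, dividing out this common factor shows that $\dim_k I(kG')^m/I(kG')^{m+1}$ is an invariant for every $m$. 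Finally, the combinatorial consequence of Theorem~\ref{th:Jennings} used for Corollary~\ref{cor:DimQuots}, now applied to the $p$-group $G'$, says that the sequence of these dimensions and the sequence of ranks $\dim_{\mathbb{F}_p}D_j(G')/D_{j+1}(G')$ determine one another. Since each $D_j(G')/D_{j+1}(G')$ is elementary abelian, hence determined up to isomorphism by its rank, this proves the first assertion. For the closing remark, when $G'$ is abelian its isomorphism type is already determined by the collection $\{D_j(G')/D_{j+1}(G')\}_j$ — this is exactly Deskins' solution of (MIP) for abelian groups \cite{Deskins56} — so the invariance just proved upgrades to invariance of $G'$ itself.

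I expect the technical heart to be the identification $J^m=I(kG')^m kG$ together with the free-module bookkeeping that isolates the clean factor $[G:G']$; everything after that is a verbatim transfer of Jennings' theory from $G$ to $G'$. A second, more conceptual point deserves care in the final sentence: matching dimension-subgroup ranks alone do not separate abelian from non-abelian $p$-groups (for instance $D_8$ and $C_4\times C_2$ share them), so the upgrade to the isomorphism type of $G'$ must genuinely invoke the abelian case of (MIP) and rely on the derived subgroup of the comparison group being abelian as well — which is automatic in the metacyclic application, where $G'$ is cyclic.
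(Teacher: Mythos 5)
The paper does not actually prove this proposition --- it only points to \cite{SandlingSurvey} and \cite{BaginskiMetacyclic} --- but your argument for the first assertion is the standard one from those sources and it is correct: $J=I(kG')kG$ is canonical as the smallest ideal with commutative quotient; normality of $G'$ gives $kG\cdot I(kG')=I(kG')\cdot kG$ and hence $J^m=I(kG')^m kG$; freeness of $kG$ as a left $kG'$-module on a transversal gives $\dim_k J^m/J^{m+1}=[G:G']\cdot\dim_k\bigl(I(kG')^m/I(kG')^{m+1}\bigr)$; and the inversion of the Poincar\'e series coming from Theorem~\ref{th:Jennings} (the same step used for Corollary~\ref{cor:DimQuots}, now applied to the $p$-group $G'$) turns this dimension sequence into the ranks of the elementary abelian quotients $D_j(G')/D_{j+1}(G')$. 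One cosmetic point: you do not need Theorem~\ref{th:AbAndCent} to see that $[G:G']$ is an invariant, since it is simply $\dim_k kG/J$ for the canonical ideal $J$.

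The only substantive issue is the one you flag yourself in your last sentence. What your argument proves for the second assertion is: if $kG\cong kH$ and \emph{both} $G'$ and $H'$ are abelian, then $G'\cong H'$, because an abelian $p$-group is recovered from the ranks of its Jennings quotients. As your own example of $C_4\times C_2$ versus $D_8$ shows, these ranks cannot detect abelianness, so the literal statement ``the isomorphism type of $G'$ is an invariant if $G'$ is abelian'' additionally requires knowing that $H'$ is abelian whenever $G'$ is, and nothing in your proof (or in the quotient data alone) supplies that. You correctly observe that this is harmless in the metacyclic application, where $G'/\Phi(G')\cong H'/\Phi(H')$ being cyclic forces $H'$ to be cyclic, hence abelian; for that use of the proposition your proof is complete. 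For the general statement you should either supply an argument that ``$G'$ abelian'' is itself an invariant of $kG$, or record the conclusion in the weaker two-sided form.
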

This was noted in \cite{SandlingSurvey} and also again by Bagi\'nski in \cite{BaginskiMetacyclic}. In the latter paper it was shown that (MIP) has a positive answer over the prime field for metacyclic groups in case $p >3$ by considering $p$-th powers of the augmentation ideal and connecting those to the $p$-th powers in $G$. Later Sandling solved the case of metacyclic groups using techniques which work over any field \cite{SandlingMetacyclic}. Sandling also showed that in case $G'$ is cyclic, the size of the biggest cyclic subgroup of $G$ containing $G'$ is an invariant of $kG$ for $k$ the field with $p$ elements, which could also be used to answer (MIP) for metacyclic groups in this case.

We refer to the original \cite{Sandling89} and also \cite[Section 2.3]{HertweckSoriano06} and \cite{MargolisStanojkovski21} for more details on the small group algebra in the context of (MIP). Moreover, Salim's thesis contains many details on the structure of the unit group of the small group algebra \cite{SalimThesis} some of which are also contained in \cite{SalimSandlingSmallGroupRing}.

\subsection{The concept of Hertweck--Soriano}

A general way of constructing an ideal $J$ of $kG$ such that any group basis of $kG$ embeds in $kG/J$ was described by Hertweck and Soriano for the case $k = \mathbb{F}_p$ and how such an ideal can be used to answer (MIP)  \cite{HertweckSoriano06}. The basic idea consists in first choosing an ideal $J$ in $kG$ with the property that any group basis of $kG$ embeds in $kG/J$. Then one studies the normalized unit group $V(kG/J)$ and determines the conjugacy classes of subgroups in this unit group which might come from a group basis of $kG$, much as it was done in the situation of the small group algebra. If one encounters no subgroup of $V(kG/J)$ which could come from a group basis of $kG$ and is not isomorphic to $G$, then this implies a positive answer to (MIP) in this case. In case one does find a subgroup $H$ of $V(kG/J)$ which is not isomorphic to $G$ and might be coming from a group basis of $kG$, one can consider all the preimages of $H$ in $V(kG)$. If no such preimage turns out to be a group basis of $kG$, then again one obtains a positive answer for (MIP) in this case, but if one finds only one preimage which is a group base, then this provides a counterexample to (MIP).

Of course, much in this strategy depends on how to choose $J$. Note that one can always take $J = 0$, but this just leads to the study of the whole unit group $U(kG)$ which is a notoriously difficult structure, cf.\ Section~\ref{sec:UnitGroup}. The idea for the choice of $J$ described in \cite{HertweckSoriano06} is based on the theory of Jennings and properties of the Zassenhaus ideals, especially Proposition~\ref{prop:Ps72}. Assume $J$ is an ideal such that one has 
\begin{align}\label{eq:IntersectingZassenhaus}
J \cap L_n(kG) \subseteq I(kG)^{n+1} \ \ \text{for all} \ n, 
\end{align}
which is a property defined independently from the group basis $G$. Then any group basis will embed in $kG/J$: indeed, if $g \in D_n(G)$ and $g-1 \in J$, then due to Proposition~\ref{prop:Ps72} we have $g-1 \in J \cap L_n(kG)$ and so $g \in I(kG)^{n+1}$. By Jennings' theory this implies $g \in D_{n+1}(G)$ and as we can continue in this way we obtain $g=1$. 

A practical way to choose $J$ is by starting with a Jennings basis 
\[B = \left\{\prod_{i=1}^\ell (g_i-1)^{\alpha_i} \ | \ 0 \leq \alpha_1,...,\alpha_\ell \leq p-1 \right\}\]
of $kG$ and define a set $\mathcal{A}$ which includes all the elements of $B$ with the sum of the exponents being at least $2$, i.e.\ not $1$ or an element of shape $(g_i-1)$ for any $g_i$. The linear subspace spanned by $\mathcal{A}$ will have the property in \eqref{eq:IntersectingZassenhaus}, again by Proposition~\ref{prop:Ps72}, but this space will in most cases not generate an ideal. On the other hand if $n$ is the minimal integer such that $D_n(G)=1$, then $I(kG)^n$ also has all the properties we demand from $J$ and is also an ideal. Note that by the theory of Jennings it is spanned by a subset of $\mathcal{A}$. Now by trying systematically the elements of $\mathcal{A}$ which lie in $I(kG)^{n-1}$, in $I(kG)^{n-2}$ etc. one can find the maximal subset of $\mathcal{A}$ which can be added to $I(kG)^n$ without destroying the property of being an ideal and satisfying \eqref{eq:IntersectingZassenhaus}. We provide an example:

\begin{example}
Let $G = \langle a,b \ | \ a^8 = b^2 = 1, \ a^b = a^{-1} \rangle$ be the dihedral group of order $16$. A Jennings basis of $I(kG)$ is then given by the set
\[B = \{(a-1)^{\alpha_1}(b-1)^{\alpha_2}(a^2-1)^{\alpha_3}(a^4-1)^{\alpha_4} \ | \ \alpha_i \in  \{0,1\}, \ \alpha_1 + \alpha_2 + \alpha_3 + \alpha_4 \neq 0 \}. \]
The smallest $n$ such that $D_n(G) = 1$ is $n = 5$, so $J$ can include at least the elements of $B$ which lie in $I(kG)^5$. Note that by the theory of Jennings $(a-1), (b-1) \in I(kG) \setminus I(kG)^2$ while $(a^2-1) \in I(kG)^2\setminus I(kG)^3$ and $(a^4-1) \in I(kG)^4 \setminus I(kG)^5$. So the elements of $B$ which are not of the form $(g-1)$ and lie in $I(kG)^m\setminus I(kG)^{m+1}$ are: $(a-1)(b-1)$ for $m = 2$, $(a-1)(a^2-1)$ and $(b-1)(a^2-1)$ for $m = 3$ as well as $(a-1)(b-1)(a^2-1)$ for $m = 4$. 

Clearly we can include $(a-1)(b-1)(a^2-1)$ in $J$ as its product with any element in $I(kG)$ will lie in $I(kG)^5$ and hence in $J$. We cannot include $(a-1)(a^2-1)$ as when multiplied with $(a-1)$ it gives $(a^4-1)$, an element which cannot lie in $J$ due to property \eqref{eq:IntersectingZassenhaus}. Moreover from Lemma~\ref{lem:BasicIds} we get that the product $(b-1)(a^2-1)(a-1)$ will have a summand $(a^4-1)$ when written in the Jennings basis $B$. Thus $(b-1)(a^2-1)$ cannot be included in $J$ either. As $(a-1)(a-1)(b-1) = (b-1)(a^2-1)$, we can also not include $(a-1)(b-1)$ in $J$.

Overall we get an ideal $J$ which is generated by $I(kG)^5$ and the element $(a-1)(b-1)(a^2-1)$. This means that $V(kG/J)$ is generated by the set 
\[\{a,b, 1 + (a-1)(b-1), 1+ (b-1)(a^2-1), 1 + (a-1)(a^2-1)  \}.\]
One can compute using the basic formulas from Lemma~\ref{lem:BasicIds} that this group is presented as:
\begin{align*}
V(kG/J) \cong \langle a,b,c,d,e \ |& \ a^8= b^2 = c^2 = d^2 = e^2 = 1, \ a^b = a^{-1}, \ c^b = cd, \ c^a = ce , \ d^a = da^4, \ e^b = ea^4, \\
                      & e^a = e, \ d^b = d, \ c^d = c, \ c^e = c, \ d^e = d   \rangle 
\end{align*}
\end{example}

This example gives some idea that even when one can construct the ideal $J$, which is in general not that hard for a given concrete group $G$, the unit group $V(kG/J)$ is still a difficult object to investigate and even more are the preimages of possible subgroups of $V(kG/J)$ in $V(kG)$. This strategy was successfully applied for some groups of order $64$ in \cite{HertweckSoriano06}, but the technicality of these applications hints at the fact that this is far from straightforward in general. Nevertheless, the method has a unique advantage compared to the other methods we have described so far: if one succeeds in determining all the subgroups of $V(kG/J)$ which are isomorphic to a given group $H$ of the same order as $G$ and then to determine whether any preimage in $kG$ of such a subgroup is isomorphic to $H$ and linearly independent over $k$, then one has answered whether $kG \cong kH$ holds or not, i.e.\ we get a final answer on (MIP) for a given pair of groups $G$ and $H$ and not just restrictions on possible properties of group bases. Indeed, proceedings along these lines the counterexamples in \cite{GarciaLucasMargolisDelRioCounterexample} were discovered.

It is worth mentioning that from a practical point of view the pure size of the prime $p$ plays a significant role in the application of these ideas. Since a bigger prime will give us, in absolute numbers, a bigger Jennings basis, this will also mean more generators for the group $V(kG/J)$, which complicates manual or even computer assisted calculations.

\subsection{Other quotients}

We shortly mention other quotients of the group algebra in which any group basis with particular properties embeds and which have been used to study (MIP). One was the quotient $kG/I(kG')^2 kG$ which was first used by Bagi\'nski and Caranti \cite{BaginskiCaranti88} and later again by Bagi\'nski \cite{Baginski99}. Note that just as in the case of the small group algebra the ideal $I(kG')^2kG$ contains $I(\Phi(G'))$, so that the maximal quotient of $G$ for which we can obtain information from $kG/I(kG')^2 kG$ is $G/\Phi(G')$. The ideas here are different from those of Sandling and the unit group is not the main tool, though Sandling's description of the unit group of the group algebra of an abelian $p$-group plays a role in \cite{Baginski99}. Rather the idea is to analyze the action of the group algebra on the quotient $I(kG')kG/I(kG')^2kG$. This action is used to show that for $N = C_G(G'/\Phi(G'))$, and under certain additional conditions, the set $I(kN) + I(kG')kG$ is exactly the centralizer of $I(kG')kG/I(kG')^2kG$ in $I(kG)$ and hence a set independent of the chosen group basis. This information is then applied to derive new invariants and new classes for which (MIP) holds. We summarize these:

\begin{theorem}\label{th:BaginskiCarantiBaggi99Invs}
Let $k = \mathbb{F}_p$. The nilpotency class of $G/\Phi(G')$ is an invariant of $kG$ \cite{BaginskiCaranti88}. Moreover set $N = C_G(G'/\Phi(G'))$. If $G/N$ is cyclic then the isomorphism types of $N/\Phi(G')$ and $G/N$ are both invariants of $kG$ \cite{Baginski99}.
\end{theorem}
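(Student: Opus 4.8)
The plan is to work, as in the discussion preceding the statement, inside the quotient algebra $A := kG/I(kG')^2kG$ and its square-zero ideal $M := I(kG')kG/I(kG')^2kG$, and to extract everything from the ring structure of $A$ so that it is automatically shared by all group bases. First I would record invariance: by the argument behind Theorem~\ref{th:AbAndCent} the relative augmentation ideal $I(kG')kG$ is the smallest ideal of $kG$ with commutative quotient, so it, its square, the algebra $A$ and the ideal $M \trianglelefteq A$ are all recognizable from $kG$ alone. Since $(w^p-1)$ and $[w,w']-1$ lie in $I(kG')^2kG$ for $w,w'\in G'$ (via Lemma~\ref{lem:BasicIds}), one has $I(k\Phi(G'))kG \subseteq I(kG')^2kG$, whence $A$ depends only on $\bar G := G/\Phi(G')$; I may therefore assume $\Phi(G')=1$, so that $G'$ is elementary abelian, $W := G'$ is an $\mathbb{F}_p[G/N]$-module via conjugation, and $N=C_G(G')$.

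Next I would analyse $M$ as an $A$-bimodule. Identifying $w\leftrightarrow(w-1)$ and using Lemma~\ref{lem:BasicIds} gives a $k$-linear isomorphism $M\cong W\otimes_k k(G/G')$, and a short computation shows that conjugation by $g\in G$ sends $(w-1)x$ to $(w^g-1)x$ modulo $M$; hence the conjugation action is $\rho(g)$ on the $W$-coordinate and trivial on the $G/G'$-coordinate, where $\rho\colon G/N\hookrightarrow\mathrm{GL}(W)$. In particular a group element centralizes $M$ exactly when it lies in $N$, so $I(kN)+I(kG')kG\subseteq Z_{I(A)}(M)$. From this bimodule picture the first assertion follows: the intrinsic descending series $M_1:=M$, $M_{i+1}:=[A,M_i]$ satisfies $M_i\cong I(k(G/N))^{\,i-1}W\otimes_k k(G/G')$, which corresponds to $\gamma_{i+1}(\bar G)$, so the least $i$ with $M_i=0$ equals the nilpotency class of $G/\Phi(G')$. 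As the series is built only from the invariant data $A$ and $M$, this class is an invariant of $kG$.

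For the remaining assertions I would use the sharper statement that, when $G/N$ is cyclic, the above inclusion is an equality $Z_{I(A)}(M)=I(kN)+I(kG')kG$. Granting it, projecting the intrinsic subalgebra $Z_{I(A)}(M)$ along $A\twoheadrightarrow A/M=k(G/G')$ produces the relative augmentation ideal of $N/G'$, so that $k(G/N)$ appears as an intrinsic quotient; since $G'\le N$ makes $G/N$ abelian, the positive solution of (MIP) for abelian groups (Deskins) recovers $G/N$ up to isomorphism. To obtain $N/\Phi(G')$ I would retain the square-zero ideal and study the intrinsic subalgebra $k\cdot 1+Z_{I(A)}(M)$, which encodes $kN$ up to $M$ and thereby the central extension of the elementary abelian $W\le Z(N)$ by $N/G'$; the cyclicity of $G/N$ is exactly what makes this extension, and hence the isomorphism type of $N/\Phi(G')$, recoverable.

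The main obstacle is the hard inclusion $Z_{I(A)}(M)\subseteq I(kN)+I(kG')kG$, i.e.\ that no element of $I(A)$ outside $I(kN)+M$ can centralize $M$. Ruling out such ``non-group-like'' centralizing elements is genuinely module-theoretic: it requires understanding $W$ as an $\mathbb{F}_p[G/N]$-module well enough to see that commuting with the $\rho$-twisted action forces the linear part into $N$. The hypothesis that $G/N$ be cyclic is what turns $\mathbb{F}_p[G/N]$ into a local uniserial ring and makes this tractable; without it the centralizer may be strictly larger, and both the identification of $G/N$ and of $N/\Phi(G')$ fail.
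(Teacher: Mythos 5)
Your framework is exactly the one the paper sketches: the quotient $A=kG/I(kG')^2kG$, the square-zero bimodule $M=I(kG')kG/I(kG')^2kG$, and the characterization of $I(kN)+I(kG')kG$ as the centralizer of $M$. The reduction to $\Phi(G')=1$ and the bimodule identification $M\cong W\otimes_k k(G/G')$ are correct, and your argument for the first assertion is essentially complete: the canonical series $M_1=M$, $M_{i+1}=[A,M_i]$ does satisfy $M_i\cong\gamma_{i+1}(G/\Phi(G'))\otimes_k k(G/G')$, so its length recovers the nilpotency class of $G/\Phi(G')$. This is in the spirit of Bagi\'nski--Caranti.

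For the two invariants in the cyclic case, however, there is a genuine gap. The equality $Z_{I(A)}(M)=I(kN)+I(kG')kG$ is not a lemma you may ``grant'': it \emph{is} the content of Bagi\'nski's theorem, and you give only the heuristic that cyclicity makes $\mathbb{F}_p[G/N]$ uniserial, with no argument that an element of $I(A)$ centralizing $M$ must have its linear part supported on $N$. Even granting it, your recovery of $N/\Phi(G')$ is asserted rather than derived. The canonical subalgebra $k\cdot 1+Z_{I(A)}(M)$ contains all of $M$, which is spread over every coset of $N$, so isolating a copy of $k(N/\Phi(G'))$ modulo $I(kW)^2$ needs a further canonical decomposition you do not supply; and the quotient of $k(N/\Phi(G'))$ one obtains this way is \emph{not} the small group algebra of $\bar N=N/\Phi(G')$ (the kernels $I(kW)^2k\bar N$ and $I(k\bar N)I(k\bar N')$ are incomparable when $\bar N'\subsetneq W$), so one cannot simply invoke Theorem~\ref{th:Sandling89} to conclude. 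The survey records that Bagi\'nski's proof in fact relies on Sandling's explicit description of the unit group of the modular group algebra of an abelian $p$-group, an ingredient absent from your proposal; some substitute for it is needed to turn your outline into a proof of the last two invariants.
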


\begin{theorem}\label{th:BaginskiCarantiBaggi99MIP}
Let $k = \mathbb{F}_p$, then (MIP) has a positive answer for $kG$ if $G$ satisfies one of the following:
\begin{itemize}
\item[(i)] $G$ is a group of maximal class of order at most $p^{p+1}$ such that $G$ contains a maximal subgroup which is abelian \cite{BaginskiCaranti88}.
\item[(ii)] $G$ contains an elementary abelian normal subgroup $N$ such that $G/N$ is cyclic \cite{Baginski99}.  
\end{itemize}
\end{theorem}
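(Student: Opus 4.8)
The plan is to prove both theorems together, since Theorem~\ref{th:BaginskiCarantiBaggi99MIP} is essentially an application of the invariants collected in Theorem~\ref{th:BaginskiCarantiBaggi99Invs}. The common technical engine is the action of $kG$ on the quotient module $W = I(kG')kG/I(kG')^2kG$, so I would begin by setting up this module carefully. First I would observe that $W$ is a $kG$-bimodule, but the crucial structure is that of a right (or left) $kG$-module, and that since $I(kG')kG$ is generated as an ideal by $\{n-1 : n \in G'\}$ and we have killed the square of the relative augmentation ideal, the action of $G$ on $W$ factors through the abelianization of $G'$ under conjugation. More precisely, for $n \in G'$ the element $(n-1) + I(kG')^2kG$ spans a piece of $W$ on which $G$ acts by conjugation composed with the projection $G' \to G'/\Phi(G')$, so that $N = C_G(G'/\Phi(G'))$ is exactly the subgroup of $G$ acting trivially, modulo lower-order terms. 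The key computational identities are those of Lemma~\ref{lem:BasicIds}, which control how $(g-1)(n-1)$ rewrites modulo $I(kG')^2kG$.

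\textbf{Recovering the centralizer.} The heart of the argument, and the step I expect to be the main obstacle, is to show that
\[
C := I(kN) + I(kG')kG
\]
is precisely the centralizer (or annihilator modulo $I(kG')^2kG$) of $W$ inside $I(kG)$, under the stated additional hypotheses. One inclusion is routine: elements of $N$ act trivially on $W$ by the definition of $N$, and $I(kG')kG$ annihilates $W$ because its image in $W$ lands in $I(kG')^2kG$. The reverse inclusion is delicate: I would take an arbitrary $x \in I(kG)$ annihilating $W$, write it in a Jennings basis, and use the identities of Lemma~\ref{lem:BasicIds} together with the nondegeneracy of the conjugation action of $G/N$ on $G'/\Phi(G')$ to force the $G$-part of $x$ to lie in $N$ modulo $I(kG')kG$. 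This is where the extra conditions (for instance that $G/N$ be cyclic in the second statement) enter, since one needs enough control over the action to invert it; without such a hypothesis the annihilator can be strictly larger and the argument breaks down. Once established, $C$ is defined by a purely ring-theoretic property of $kG$ acting on a canonically defined subquotient, hence is an \emph{invariant} of the algebra: any isomorphism $kG \cong kH$ carries $I(kG)$ to $I(kH)$, $I(kG')kG$ to $I(kH')kH$ (as the smallest ideal with commutative-up-to-$G'$ quotient, cf.\ the recognition of $I(RG')RG$ in the proof of Theorem~\ref{th:AbAndCent}), and therefore $C$ to the analogous object for $H$.

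\textbf{Extracting the invariants.} Granting that $C$ is recognizable, the invariants in Theorem~\ref{th:BaginskiCarantiBaggi99Invs} follow by reading off quotients. The nilpotency class of $G/\Phi(G')$ is detected because the filtration of $W$ by powers of the augmentation ideal, together with the action, encodes how many nontrivial steps the conjugation action has, which is exactly the class of $G/\Phi(G')$. For the second part, the quotient $kG/C \cong k(G/N) \cdot(\text{something})$ lets one recover $k(G/N)$; since $G/N$ is assumed cyclic and (MIP) is already known for abelian — indeed cyclic — groups by Deskins' result (the Corollary after Theorem~\ref{th:Jennings}), the isomorphism type of $G/N$ is an invariant, and then $N/\Phi(G')$ is pinned down as a complementary piece. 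Finally, for Theorem~\ref{th:BaginskiCarantiBaggi99MIP} I would combine these invariants with the previously established ones — the dimension subgroup quotients (Corollary~\ref{cor:DimQuots}), the isomorphism types of $G/G'$ and $Z(G)$ (Theorem~\ref{th:AbAndCent}), and the quotient $D_n(G)/D_{n+2}(G)$ (Theorem~\ref{th:LP72}) — to rule out non-isomorphic group bases in the two listed families. In case (ii), where $G$ has an elementary abelian normal $N$ with $G/N$ cyclic, the structure is rigid enough that these invariants determine $G$ completely; in case (i), the maximal-class hypothesis with an abelian maximal subgroup forces $G'$ to be well-controlled so that the class of $G/\Phi(G')$ together with the standard invariants suffices. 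The expected difficulty throughout is purely the centralizer computation, since everything downstream is bookkeeping with already-known invariants.
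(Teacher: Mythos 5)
Your overall strategy matches the one the survey attributes to Bagi\'nski--Caranti and Bagi\'nski: the paper gives no proof of this theorem, only the surrounding sketch that one should identify $I(kN)+I(kG')kG$, for $N=C_G(G'/\Phi(G'))$, as the centralizer of $I(kG')kG/I(kG')^2kG$ in $I(kG)$ and thereby obtain a canonical subalgebra. Your setup of the module $W$, the two inclusions, and the observation that the hard direction needs the extra hypotheses are all consistent with that. However, the proposal has two genuine gaps beyond the (acknowledged) unexecuted centralizer computation. First, the endgame for case (ii) does not work as stated: knowing the isomorphism types of $N/\Phi(G')$ and $G/N$ as abstract groups does not determine $G$, since the extension class and the action of $G/N$ on $N$ are not captured by those two invariants --- ``these invariants determine $G$ completely'' is precisely the content of the theorem, not a consequence of bookkeeping. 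You also conflate two different subgroups called $N$: the canonical $N=C_G(G'/\Phi(G'))$ of Theorem~\ref{th:BaginskiCarantiBaggi99Invs} and the hypothesized elementary abelian normal subgroup with cyclic quotient in case (ii); these need not coincide, and the actual argument in \cite{Baginski99} additionally leans on Sandling's explicit description of $V(kA)$ for $A$ abelian (as the survey notes) to control the possible group bases inside the relevant quotient, a step entirely missing from your outline.

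Second, in case (i) the hypotheses ``order at most $p^{p+1}$'' and ``contains an abelian maximal subgroup'' are never used in your argument, which is a strong signal that the deduction is incomplete: the class of $G/\Phi(G')$ together with the standard invariants does not suffice to separate maximal class groups. The survey itself records that Bagi\'nski and Caranti had to supplement the centralizer invariant with a kernel-size computation for the $p$-power map $I(kG)/I(kG)^2\rightarrow I(kG)^p/I(kG)^{p+1}$ to distinguish these groups, and it is exactly there that the bound $p^{p+1}$ (which keeps the relevant groups regular enough for the power map to be controlled by the abelian maximal subgroup) enters. So while your two-theorem framework and the identification of the centralizer as the technical core are right, the passage from the invariants to a positive answer to (MIP) in each of the two families is asserted rather than proved, and in both cases the missing ingredient is a named, nontrivial piece of the original arguments.
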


Yet another quotient was used by Salim and Sandling to handle those groups of order $p^5$ for which (MIP) remained open after the application of classical invariants and the use of the small group algebra \cite{SalimSandlingMaximal}. These groups turn out to be exactly the groups of maximal class. Note that also in Passman's proof of (MIP) for groups of order $p^4$ the groups of maximal class turned out to be the most complicated case with the methods available to him \cite{Passmanp4}. In \cite{SalimSandlingMaximal} the authors used the quotient $kG/(I(kG)I(kG')^2 + I(kG)^{p+1}I(kG') + I(kG)^{2p+1})$ and its unit group to show that indeed for a group $G$ of maximal class of order $p^5$ the group algebra $kG$ determines $G$ when $k = \mathbb{F}_p$. They also used the result for groups of maximal class given in Theorem~\ref{th:BaginskiCarantiBaggi99MIP}.

\section{The Counterexamples}\label{sec:Counterex}

We describe the counterexamples to (MIP) which have been recently obtained \cite{GarciaLucasMargolisDelRioCounterexample}. As the proof is short and easy to understand, we include it in full.

We start with a very general observation on generating sets of algebras. Denote by $J(A)$ the Jacobson radical of an algebra $A$.

\begin{proposition}\label{prop:AuslanderReitenSmalo}\cite[Proposition 5.2]{KuelshammerBook91} 
Let $A$ be a finite dimensional algebra over a field and $B$ a subalgebra of $A$. Then $A = B + J(A)^2$ implies $A = B$.
\end{proposition}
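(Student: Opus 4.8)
The plan is to run a Nakayama-type argument, exploiting that $J:=J(A)$ is nilpotent because $A$ is finite dimensional, so that $J^m=0$ for some $m$. The final conclusion $A=B$ reduces entirely to proving $J\subseteq B$: once that is known, $J^2\subseteq B$ and hence $A=B+J^2=B$.

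The key opening move is \emph{not} to manipulate the hypothesis $A=B+J^2$ directly — the products $BJ^2$ and $J^2B$ are not under control, since $B$ is only a subalgebra and not an ideal — but rather to intersect the equality with $J$. Because $J^2\subseteq J$, the Dedekind modular law for subspaces gives
\[ J = J\cap A = J\cap(B+J^2) = (J\cap B)+J^2. \]
Writing $C:=J\cap B\subseteq B$, this reads $J=C+J^2$, and I record the two facts I will use repeatedly: $C^2\subseteq B\cap J^2\subseteq C$, and $CJ^m,\,J^mC\subseteq J^{m+1}$ for every $m$.

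From here I would bootstrap by induction, proving $J=C+J^m$ for all $m\geq 2$. The base case is the identity just established. For the inductive step, assuming $J=C+J^m$, I substitute this into both factors of $J^2=J\cdot J$ in the relation $J=C+J^2$:
\[ J = C + J\cdot J \subseteq C + (C+J^m)^2 = C + C^2 + CJ^m + J^mC + J^{2m} \subseteq C + J^{m+1}, \]
using $C^2\subseteq C$, the two containments above, and $2m\geq m+1$; the reverse inclusion $C+J^{m+1}\subseteq J$ is immediate. Hence $J=C+J^{m+1}$, closing the induction.

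Finally, choosing $m$ with $J^m=0$ yields $J=C=J\cap B\subseteq B$, and therefore $A=B+J^2\subseteq B$, i.e.\ $A=B$. The only genuine obstacle is the opening step: recognising that one should pass to $J\cap A$ and invoke modularity rather than fight with the uncontrolled products; after that, the nilpotence of $J$ does all the work. The one point worth double-checking is the innocuous containment $C^2\subseteq C$, since it is exactly what prevents the $C$-term from proliferating across the induction.
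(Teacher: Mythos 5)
Your proof is correct; the paper itself gives no proof of this proposition but only cites K\"ulshammer's notes, and your argument is exactly the standard Nakayama-type bootstrap one would find for this statement. The opening reduction $J=(J\cap B)+J(A)^2$ via the Dedekind modular law, the induction $J=(J\cap B)+J(A)^m$, and the appeal to nilpotence of $J(A)$ all check out, including the containment $(J\cap B)^2\subseteq B\cap J(A)^2\subseteq J\cap B$ that keeps the induction from proliferating terms.
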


As the radical of $kG$ equals $I(kG)$, for us this has the following consequence:

\begin{corollary}\label{cor:GenModI2}
Let $g_1,...,g_d$ be a generating set of $G$. Then for any $\alpha_1,...,\alpha_d \in I(kG)^2$ the elements $(g_1 - 1) + \alpha_1,...,(g_d - 1) + \alpha_d$ generate $I(kG)$.
\end{corollary}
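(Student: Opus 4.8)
The plan is to apply Proposition~\ref{prop:AuslanderReitenSmalo} to the algebra $A = kG$ and the subalgebra $B$ generated (as a unital algebra) by the elements $x_i := (g_i-1)+\alpha_i$ for $i = 1,\dots,d$. Since the radical of $kG$ is $I(kG)$, the hypothesis $A = B + J(A)^2$ of the proposition reads $kG = B + I(kG)^2$, and its conclusion reads $kG = B$. So the first goal is to verify $kG = B + I(kG)^2$, and the second is to descend from the statement that the $x_i$ generate $kG$ as a unital algebra to the statement that they generate $I(kG)$.

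For the first goal I would observe that $B + I(kG)^2$ is again a subalgebra of $kG$, being the sum of the subalgebra $B$ and the two-sided ideal $I(kG)^2$, and that it contains $1$. Because $\alpha_i \in I(kG)^2$ we have $g_i = 1 + (x_i - \alpha_i) \in B + I(kG)^2$ for each $i$. As $\{g_1,\dots,g_d\}$ generates $G$ and $G$ is finite, every element of $G$ is a product of the $g_i$ (each inverse is recovered as a power $g_i^{|g_i|-1}$), so the submonoid generated by the $g_i$ is all of $G$. Hence the subalgebra $B + I(kG)^2$ contains all of $G$ and therefore equals $kG$. Proposition~\ref{prop:AuslanderReitenSmalo} now yields $B = kG$.

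To finish I would use the augmentation splitting $kG = k\cdot 1 \oplus I(kG)$. Let $B_0$ denote the (non-unital) subalgebra generated by $x_1,\dots,x_d$; since each $x_i$ lies in $I(kG)$ and $I(kG)$ is an ideal, $B_0 \subseteq I(kG)$. On the other hand $B = k\cdot 1 + B_0$, and we have just shown $B = kG$. Thus any $\beta \in I(kG)$ can be written $\beta = c\cdot 1 + b_0$ with $c \in k$ and $b_0 \in B_0$; applying the augmentation $\varepsilon$ and using $\varepsilon(\beta) = \varepsilon(b_0) = 0$ forces $c = 0$, so $\beta = b_0 \in B_0$. Hence $I(kG) \subseteq B_0$, and together with the reverse inclusion this gives $I(kG) = B_0$, which is the assertion.

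The computational content is minimal; the only points requiring care are the observation that $B + I(kG)^2$ is closed under multiplication (so that containing the generators $g_i$ already forces it to be all of $kG$) and the clean removal of the constant term via the augmentation in the last step. The main conceptual move is recognizing that Proposition~\ref{prop:AuslanderReitenSmalo} is precisely the tool that allows one to ignore the perturbations $\alpha_i$ once they are known to lie in $I(kG)^2 = J(kG)^2$.
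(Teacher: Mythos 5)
Your proof is correct and follows exactly the route the paper intends: apply Proposition~\ref{prop:AuslanderReitenSmalo} with $B$ the unital subalgebra generated by the perturbed elements, verify $kG = B + I(kG)^2$ using that the $g_i$ generate the finite group $G$, and then strip off the constant term with the augmentation. The details you supply (closure of $B + I(kG)^2$ under multiplication, recovering inverses as powers, and the splitting $kG = k\cdot 1 \oplus I(kG)$) are all sound.
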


Corollary~\ref{cor:GenModI2} could also be proven directly in the modular group algebra of a $p$-group, cf.\ \cite[Section 2]{SalimSandlingSmallGroupRing}. But it is worth noting that the needed property is connected to very general algebra properties of $kG$. 

To define the groups which will give a negative solution to (MIP) let $m$ and $n$ be integers such that $n > m > 2$ and define:
\begin{eqnarray*}
G&=&\langle x,y,z \ \mid \ z=[y,x], \ x^{2^n}=y^{2^m}=z^4=1, \ z^x=z^{-1}, \ z^y=z^{-1} \rangle,	\\
H&=&\langle a,b,c \  \mid \ c=[b,a], \ a^{2^n}=b^{2^m}=c^4=1, \ c^a=c^{-1}, \ c^b=c \rangle.
\end{eqnarray*}

We first show:

\begin{lemma}\label{lem:HAndHNotIsom}
The groups $G$ and $H$ are not isomorphic.
\end{lemma}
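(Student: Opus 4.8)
The plan is to distinguish $G$ and $H$ by computing a group-theoretic invariant that is sensitive to the difference in the defining relations, namely that in $G$ both generators invert $z$ while in $H$ only $a$ inverts $c$ (with $b$ centralizing $c$). Both groups are evidently nilpotent of class $3$ with $G' = \langle z \rangle \cong C_4$ and $H' = \langle c \rangle \cong C_4$, and in both the quotient by the derived subgroup is $\langle \bar x\rangle \times \langle \bar y\rangle \cong C_{2^n}\times C_{2^m}$ (respectively with $\bar a, \bar b$). So the abelianizations, orders, and derived subgroups all coincide, and the invariant that separates them must come from how the center sits inside the group, or equivalently from the action of the generators on $G'$.

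The cleanest approach is to compute the center of each group and compare. First I would observe that $z^2$ (respectively $c^2$) is central of order $2$ in both groups, since $(z^2)^x = (z^x)^2 = z^{-2} = z^2$ and similarly for the other actions; thus $\gamma_3(G) = [G,G'] = \langle z^2\rangle$ in $G$, whereas in $H$ one has $[a,c] = c^{-2}$ but $[b,c]=1$, so $\gamma_3(H) = \langle c^2\rangle$ as well. The decisive difference appears in $Z(G)$ versus $Z(H)$. In $H$, because $b$ centralizes $c$, the element $b$ commutes with $a$ modulo $\langle c\rangle$ and one can check that a suitable power of $b$ (or $b$ itself adjusted by a central element) lands in the center, producing a center with a different invariant factor decomposition than that of $G$, where the symmetric inversion $z^x = z^y = z^{-1}$ forces the central elements among the $\langle x,y\rangle$-part to be constrained more tightly. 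Concretely I would write a general element as $x^i y^j z^k$, impose commutation with both $x$ and $y$ using the relations, and solve the resulting congruences on the exponents $i,j,k$ modulo $2^n$, $2^m$, $4$ respectively.

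The main obstacle I anticipate is bookkeeping the commutator relations carefully enough to pin down $Z(G)$ and $Z(H)$ exactly, since these are class-$3$ groups and collecting powers of $z$ (resp.\ $c$) that arise from commutators requires the identity $[x^i, y^j] = z^{ij}$ up to higher terms, together with the action relations, and one must track the order of each generator modulo the center. Once the two centers are computed as abelian groups, I expect them to have different isomorphism types — or, should the centers coincide, I would instead compare the isomorphism type of a finer quotient such as $G/Z(G)$ or the pair $(Z(G), Z(G)\cap G')$ — and the mismatch immediately yields $G \not\cong H$. A clean alternative, avoiding center computations, is to count elements of a fixed order or to exhibit a characteristic subgroup present in one group but not the other; but I expect the center comparison to be the most direct route and would present it as the primary argument.
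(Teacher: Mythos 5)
Your primary argument does not go through: the two centers are isomorphic. A direct computation with your own setup (writing elements as $x^iy^jz^k$ and using $[y^j,x]=z^{j\bmod 2}$, $[x^i,y]=z^{-(i\bmod 2)}$, $[z^k,x]=[z^k,y]=z^{-2k}$) gives $Z(G)=\langle x^2,y^2,z^2\rangle\cong C_{2^{n-1}}\times C_{2^{m-1}}\times C_2$, while in $H$ the conditions $[b^j,a]=c^j$, $[a^i,b]=c^{-(i\bmod 2)}$, $[c^k,a]=c^{-2k}$ yield $Z(H)=\langle a^2,\,b^2c,\,c^2\rangle\cong C_{2^{n-1}}\times C_{2^{m-1}}\times C_2$ as well. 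This failure is not bad luck: since these groups satisfy $kG\cong kH$ and the isomorphism type of the center is an invariant of the modular group algebra (Theorem~\ref{th:AbAndCent}), the centers were forced to agree. The same fate befalls your listed fallbacks: $Z(G)\cap G'\cong Z(H)\cap H'\cong C_2$ (an invariant by Sandling's results), and one checks $G/Z(G)\cong H/Z(H)\cong D_8$. So none of the invariants you name separates the groups, and the proposal as written contains no completed argument.

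The invariant that actually works — and the one the paper uses — is the centralizer of the derived subgroup, which you correctly identify as the locus of the difference in the relations but never exploit. In $H$ one has $C_H(H')=\langle c,a^2,b\rangle\cong C_{2^{n-1}}\times C_{2^m}\times C_4$, of exponent $2^{n-1}$ (as $n>m$), whereas in $G$ the element $xy$ centralizes $z$ (because $z^{xy}=(z^{-1})^y=z$) and has order $2^n$, so $C_G(G')\not\cong C_H(H')$; since the centralizer of the derived subgroup is characteristic, $G\not\cong H$. Note that the exponent of $C_G(G')$ is precisely one of the properties the paper later records as \emph{not} being an invariant of $kG$ (Proposition~\ref{prop:NonInvariants}); any successful proof of this lemma must hinge on such a non-invariant, which is a useful sanity check to apply before committing to a candidate like the center.
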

\begin{proof}
From the defining relations of $H$ we get $H' = \langle c \rangle$. As $a^2, b \in C_H(H')$ and $\langle c, a^2, b \rangle$ is an abelian and maximal subgroup of $H$ we get $C_H(H') = \langle c, a^2, b \rangle$. The given set is an independent generating set of $C_H(H')$, which follows directly from the defining relations of $H$. So $C_H(H') \cong C_{2^{n-1}} \times C_{2^m} \times C_4$. On the other hand $C_G(G')$ contains an element of order $2^n$, namely $xy$, so $C_G(G') \not\cong C_H(H')$. As the centralizer of the derived subgroup is a characteristic subgroup, we conclude $G \not\cong H$.
\end{proof}

This lemma implies that together with the following we obtain a negative solutions to (MIP). We will use the a basic commutator relation which holds for elements $r,s,t$ in any group:
\begin{align}\label{eq:CommRel}
[r,st] = [r,t][r,s]^t.
\end{align}

\begin{theorem}
In $kH$ the elements $a$ and $b(a+b+ab)c$ generate a group basis isomorphic to $G$.
\end{theorem}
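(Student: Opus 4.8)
The goal is to verify that the two explicit elements $A := a$ and $B := b(a+b+ab)c$ of $kH$ generate a subgroup of the unit group $V(kH)$ which is a group basis isomorphic to $G$. The strategy has three parts: first confirm that $A$ and $B$ are units satisfying the defining relations of $G$; second confirm that together with their commutator $C := [B,A]$ they generate the \emph{whole} of $kH$ as an algebra (so that the subgroup they generate is indeed a group basis, i.e.\ a linearly independent $k$-spanning set); and third check that the relations they satisfy are \emph{exactly} those of $G$, so the resulting group basis is a quotient of $G$ of the right order, hence isomorphic to $G$.

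\textbf{Generation.} The second part is the easiest to dispatch and I would do it first using Corollary~\ref{cor:GenModI2}. Since $a,b$ generate $H$, the elements $(a-1)$ and $(b-1)$ span $I(kH)$ modulo $I(kH)^2$. Now $B-1 = b(a+b+ab)c - 1$; one checks that the ``error term'' $b(a+b+ab)c - b$ lies in $I(kH)^2$, so that $B-1 \equiv (b-1) \bmod I(kH)^2$. Hence by Corollary~\ref{cor:GenModI2} the elements $A-1$ and $B-1$ generate $I(kH)$, and so $A,B$ generate $kH$ as a $k$-algebra. This guarantees that any subgroup of $V(kH)$ containing $A$ and $B$ will $k$-span $kH$; provided it has order exactly $|G| = |H|$ it will be a group basis.

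\textbf{Relations.} The main work is the third part: showing $A,B,C$ satisfy precisely the relations defining $G$, namely $A^{2^n} = B^{2^m} = C^4 = 1$, $C = [B,A]$, and $C^A = C^B = C^{-1}$. Since $A=a$, the relation $A^{2^n}=1$ is immediate. For the orders of $B$ and $C$ and the conjugation relations I would compute $C = [B,A] = B^{-1}A^{-1}BA$ explicitly in $kH$, using the relations of $H$ (where $c$ is central except for $c^a = c^{-1}$) and the commutator identity~\eqref{eq:CommRel}. The key expected phenomenon is that the extra factor $(a+b+ab)c$ twisting $b$ into $B$ is engineered so that the commutator $[B,A]$ becomes an element behaving like $z=[y,x]$ in $G$: in particular one must find that $C^A = C^{-1}$ (mirroring $z^x=z^{-1}$) \emph{and} $C^B = C^{-1}$ (mirroring $z^y=z^{-1}$), even though in $H$ one had $c^b = c$. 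This is the heart of the matter --- the modification of $b$ changes the way the commutator interacts with conjugation by $b$, flipping the sign. I would then verify $C^4=1$ and $B^{2^m}=1$ by direct power computations in $kH$.

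\textbf{Conclusion and main obstacle.} Once the relations are verified, the subgroup $\langle A,B\rangle \leq V(kH)$ is an epimorphic image of $G$ (it satisfies all of $G$'s relations). Because $A,B$ generate $kH$, this subgroup spans $kH$ over $k$, forcing its order to be at least $\dim_k kH = |H| = |G|$; combined with it being a quotient of $G$, the order is exactly $|G|$ and the epimorphism $G \twoheadrightarrow \langle A,B\rangle$ is an isomorphism. Thus $\langle A,B\rangle$ is a group basis isomorphic to $G$. I expect the main obstacle to be the explicit commutator computation establishing $C^B = C^{-1}$: this requires carefully tracking how the non-group-element correction $(a+b+ab)c$ propagates through the product $B^{-1}A^{-1}BA$ and through conjugation by $B$, keeping in mind that $B^{-1}$ must itself be computed as a unit in $kH$ rather than as a group element. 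Getting the signs and the central factor $c$ to come out correctly is the delicate step; the rest is routine bookkeeping with Lemma~\ref{lem:BasicIds} and the defining relations of $H$.
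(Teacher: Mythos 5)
Your skeleton matches the paper's exactly: reduce to $k=\mathbb{F}_2$ (which you omit but which is needed if you want the statement over an arbitrary field, via \eqref{eq:TensorFp}), show $B\equiv b \bmod I(kH)^2$ and invoke Corollary~\ref{cor:GenModI2} for generation, verify the defining relations of $G$, and close with the counting argument (quotient of $G$ of order at least $\dim_k kH=|H|=|G|$, hence a group basis isomorphic to $G$). The generation step and the endgame are correct as you state them. Where you diverge is precisely at the point you flag as the main obstacle: you propose to compute $C=[B,A]$ explicitly, including $B^{-1}$ as a unit, and to check $C^A=C^{-1}$, $C^B=C^{-1}$, $C^4=1$ by brute force. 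The paper never computes the commutator at all. Instead it observes that $A^2=a^2$ is central, so $1=[B,A^2]=[B,A][B,A]^A$ by \eqref{eq:CommRel} gives $C^A=C^{-1}$ immediately; and it shows $B^2$ is central by writing $B=b^2c+ba(1+b)c$ as a sum of two commuting elements (since $b^2c\in Z(H)$) and computing $(ba(1+b)c)^2=a^2b^2c(1+bc)(1+b)$, which is central because $(1+bc)(1+b)$ is built from the conjugacy class $\{b,bc\}$; then $1=[A,B^2]$ yields $C^B=C^{-1}$ by the same identity. The relation $C^4=1$ also costs nothing: $C$ is a commutator in $V(kH)$, hence lies in $1+I(kH')kH=1+(c-1)kH$, and $(c-1)^4=0$, so $C^4=1$; and $B^{2^m}=1$ falls out of the same decomposition of $B$. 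So the two proofs share a frame, but the paper's centrality-plus-ideal argument eliminates exactly the "delicate step" your plan defers; if you pursue the direct computation you should expect it to be substantially longer, and as written your proposal does not yet contain the verification on which the whole theorem rests.
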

\begin{proof}
We first note that by \eqref{eq:TensorFp} it is enough to handle the case $k = \mathbb{F}_2$, so we assume that $k$ is indeed the prime field.

Set $\tilde{x} = a$ and $\tilde{y} = b(a+b+ab)c$. We first show that these elements generate $kH$. As $c \in H' \subseteq D_2(H)$, we have $c \equiv 1 \bmod I(kH)^2$ by the theory of Jennings. Moreover from $a + b + ab = 1 + (1+a)(1+b)$ we get
\[\tilde{y} = b(a+b+ab)c \equiv b + (1+a)(1+b) \equiv b \mod I(kH)^2.\]
So the elements $\tilde{x}$ and $\tilde{y}$ indeed generate $kH$ by Corollary~\ref{cor:GenModI2}.

Next we show that the elements $\tilde{x}$ and $\tilde{y}$ satisfy the same relations as $x$ and $y$ from the definition of $G$. For this set $\tilde{z} = [\tilde{y},\tilde{x}]$. Clearly the order of $\tilde{x}$ is $2^n$ and $\tilde{x}^2$ is central in $V(kH)$. Together with \eqref{eq:CommRel} this gives
\[1 = [\tilde{y}, \tilde{x}^2 ] = [\tilde{y}, \tilde{x}][\tilde{y},\tilde{x}]^{\tilde{x}} = \tilde{z} \tilde{z}^{\tilde{x}}  \]
which implies $\tilde{z}^{\tilde{x}} = \tilde{z}^{-1}$. We next compute $\tilde{y}^2$. This is facilitated a bit by the fact that $b^2c$ is central in $H$ and hence $\tilde{y} = b^2c + ba(1+b)c$ is the sum of two elements that commute. From the relations $ba = abc$ and $ca = ac^{-1}$ and the fact that $a^2$ is central in $H$ we get
\begin{align}\label{eq:Squarey}
(ba(1+b)c)^2 = a^2b^2c(1+bc)(1+b)
\end{align}
Note that as $b^a = bc$, the set $\{ b,bc \}$ is a conjugacy class in $H$ and so $(1+bc)(1+b)$ is central in $kH$. Hence $(ba(1+b)c)^2$ is a product of central elements, implying that $\tilde{y}^2$ is central itself. This gives by \eqref{eq:CommRel}
\[1 = [\tilde{x}, \tilde{y}^2] = [\tilde{x},\tilde{y}][\tilde{x},\tilde{y}]^{\tilde{y}} = \tilde{z}^{-1}\tilde{z}^{-\tilde{y}} \]
and so $\tilde{z}^{\tilde{y}} = \tilde{z}^{-1}$. Moreover we have from \eqref{eq:Squarey}
\begin{align*}
(ba(1+b)c)^{2^m} &= a^{2^m}b^{2^m}c^{2^{m-1}}((1+bc)(1+b))^{2^{m-1}} = a^{2^m}(1+b^{2^{m-1}}c^{2^{m-1}})(1+b^{2^{m-1}}) \\
 &=  a^{2^m}(1+b^{2^{m-1}})(1+b^{2^{m-1}}) = a^{2^m}(1+b)^{2^m} = 0  
\end{align*}
and so $\tilde{y}^{2^m} = (b^2c)^{2^m} = 1$. The last defining relation we need to check for the group $\langle \tilde{x}, \tilde{y} \rangle$ is that $\tilde{z}^4 = 1$. For this we will use the ideal $J = I(kH')kH$ which was used to prove the invariant $H/H'$ in Theorem~\ref{th:AbAndCent}. The ideal $J$ is the smallest ideal of $kH$ with commutative quotient and hence $1+J$ is the smallest subgroup of $V(kH)$ with abelian quotient, i.e.\ $1+J$ is the derived subgroup. As $\tilde{z}$ is a commutator in $V(kH)$, this gives $\tilde{z} \in 1+J$. Now $H' = \langle c \rangle$, so $J = (c-1)kH$ and $J^4 = (c-1)^4kH = 0$. We conclude that $\tilde{z}^4 \in (1+J)^4 = 1$ and the theorem follows. 
\end{proof}

We note that the groups $G$ and $H$ are remarkably close to several classes of groups for which a positive answer on (MIP) has been obtained. They are $2$-generated of class $3$ with $G'$ cyclic of order $4$, $C_G(G')$ a maximal and abelian subgroup of $G$ and of order at least $2^9$. Moreover they contain a normal $2$-generated abelian subgroup with cyclic quotient. In this sense they are close to being: metacyclic; $2$-generated of class $2$; $2$-generated of class $3$ with elementary abelian derived subgroup; of class $3$ with $C_G(G')$ a maximal and abelian subgroup such that $G'$ is elementary abelian; of order $2^8$. For all these classes a positive solution of (MIP) over the prime field is known, cf.\ Section~\ref{sec:History}.

\section{Non-invariants and possible invariants}
In many sections of this survey we covered invariants of the algebra $kG$. The examples of Section~\ref{sec:Counterex} now allow us to determine also properties of $G$ which are not invariants of $kG$. This is that much easier as the counterexamples to (MIP) are of rather simple shape and small order and hence very accessible to manual or computer aided calculations. For the smallest values $n = 4$, $m=3$ the groups have order $2^9$ and the identifiers $[512,456]$ and $[512,453]$ in the Small Group Library \cite{SmallGroupLibrary} of GAP \cite{GAP}, so one can directly determine many parameters for those groups. For instance:

\begin{proposition}\label{prop:NonInvariants}
The following properties of $G$ are not invariants of $kG$:
\begin{itemize}
\item[(i)] The exponent of $C_G(G')$.
\item[(ii)] The size of the automorphism group of $G$.
\item[(iii)] The number of subgroups of $G$, the number of conjugacy classes of subgroups of $G$ and the number of conjugacy classes of cyclic subgroups of $G$.
\end{itemize}
\end{proposition}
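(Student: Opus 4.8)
The plan is to exploit the fact that $G$ and $H$ are completely explicit small $2$-groups—the smallest pair having order $2^9$ with GAP identifiers $[512,456]$ and $[512,453]$—together with the already-established facts that $kG \cong kH$ (from the theorem just proved) while $G \not\cong H$ (from Lemma~\ref{lem:HAndHNotIsom}). Since the counterexample groups are so concrete, the entire proposition amounts to exhibiting, for each listed property, a numerical discrepancy between $G$ and $H$; the isomorphism $kG \cong kH$ then shows immediately that the property is not determined by the group algebra. The strategy is therefore to compute each quantity twice, once for $G$ and once for $H$, and verify the two values differ.

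For part (i), I would use the structural information already extracted in the proof of Lemma~\ref{lem:HAndHNotIsom}: there it is shown that $C_H(H') = \langle c, a^2, b\rangle \cong C_{2^{n-1}} \times C_{2^m} \times C_4$, whose exponent is $2^{\max(n-1,m,2)} = 2^{n-1}$ for $n > m > 2$, while $C_G(G')$ contains the element $xy$ of order $2^n$, giving exponent (at least) $2^n$. Thus the exponents of $C_G(G')$ and $C_H(H')$ already differ by a factor of $2$, and since $C_G(G')$ is a characteristic subgroup this settles (i) without any computer assistance. For parts (ii) and (iii), I would instead invoke direct computation in GAP on the groups $[512,456]$ and $[512,453]$: compute $|\mathrm{Aut}(G)|$ versus $|\mathrm{Aut}(H)|$ for (ii), and the relevant subgroup counts—the total number of subgroups, the number of conjugacy classes of subgroups, and the number of conjugacy classes of cyclic subgroups—for (iii). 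In each case one records that the two groups yield different integers, which is enough to conclude that none of these quantities is an invariant of $kG$.

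The argument is logically trivial once the numbers are in hand: if $P$ is any property taking different values on $G$ and $H$, then because $kG \cong kH$ the value of $P$ cannot be recoverable from the isomorphism type of the group algebra, so $P$ is not an invariant. The only genuine content is the verification that the stated properties really do differ on the two specific groups. For (i) this is a clean hand computation flowing from the proof of Lemma~\ref{lem:HAndHNotIsom}, so I expect no obstacle there. For (ii) and (iii) the computations are routine for a computer algebra system but not feasible to present by hand, since automorphism-group orders and subgroup-lattice statistics of a group of order $512$ are sizable; the main (and essentially only) obstacle is thus the reliance on machine computation, and I would simply cite the GAP small group library \cite{SmallGroupLibrary, GAP} and report the distinct values rather than attempt a manual derivation.
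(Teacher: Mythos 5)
Your proposal matches the paper's approach exactly: the paper gives no formal proof but points to the explicit groups $[512,456]$ and $[512,453]$ in the GAP Small Group Library as being "very accessible to manual or computer aided calculations," which is precisely your plan of reading off the exponent discrepancy for (i) from the computation in Lemma~\ref{lem:HAndHNotIsom} and verifying (ii) and (iii) by machine. The argument is correct and complete in the same sense the paper intends.
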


As the number of conjugacy classes of cyclic subgroups in $G$ is exactly the number of indecomposable summands in the algebra $\mathbb{Q}G$ \cite[Corollary 7.1.12]{GRG1}, the last fact has the following consequence:

\begin{corollary}
For $G$ and $H$ as in Section~\ref{sec:Counterex} the rational group algebras of $G$ and $H$ are not isomorphic.
\end{corollary}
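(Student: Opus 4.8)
The plan is to read off the claim directly from part (iii) of Proposition~\ref{prop:NonInvariants} together with the stated structural fact about $\mathbb{Q}G$. The key observation is that the corollary is not really a new computation: it is a reinterpretation of a non-invariance already recorded in the proposition. So my first step is simply to invoke the cited result \cite[Corollary 7.1.12]{GRG1}, which asserts that the number of indecomposable (Wedderburn) summands of the rational group algebra $\mathbb{Q}G$ equals the number of conjugacy classes of cyclic subgroups of $G$. This ties a ring-theoretic invariant of $\mathbb{Q}G$ — the number of simple components in its Wedderburn decomposition — to a purely group-theoretic count.

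Next I would combine this with the content of Proposition~\ref{prop:NonInvariants}(iii): for the specific groups $G$ and $H$ of Section~\ref{sec:Counterex} (satisfying $kG \cong kH$ over any field of characteristic $2$, yet $G \not\cong H$), the number of conjugacy classes of cyclic subgroups differs between $G$ and $H$. By the previous paragraph this means the number of indecomposable summands of $\mathbb{Q}G$ differs from that of $\mathbb{Q}H$. Since any $\mathbb{Q}$-algebra isomorphism $\mathbb{Q}G \cong \mathbb{Q}H$ would have to match up the Wedderburn decompositions and hence preserve the number of simple components, the differing counts force $\mathbb{Q}G \not\cong \mathbb{Q}H$. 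This is the whole argument: the chain is
\[
\mathbb{Q}G \cong \mathbb{Q}H
\;\Longrightarrow\;
(\text{\# summands of }\mathbb{Q}G) = (\text{\# summands of }\mathbb{Q}H)
\;\Longrightarrow\;
(\text{\# cyclic-subgroup classes of }G) = (\text{\# of }H),
\]
contradicting Proposition~\ref{prop:NonInvariants}(iii).

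The only point requiring a little care — and the place I would expect a reader to pause — is the justification that an algebra isomorphism preserves the number of indecomposable two-sided summands. This is standard: $\mathbb{Q}G$ is semisimple by Maschke's theorem (the characteristic $0$ does not divide the group order), so it decomposes uniquely up to permutation into simple two-sided ideals, and any isomorphism permutes these, in particular fixing their number. I would state this in one sentence rather than belabour it. The genuinely substantive input has already been done elsewhere: the existence of the counterexample pair $G, H$ from Section~\ref{sec:Counterex}, and the explicit verification (done by direct computation on the groups $[512,456]$ and $[512,453]$, as indicated before Proposition~\ref{prop:NonInvariants}) that their cyclic-subgroup-class counts differ. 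Thus the main obstacle is not in this corollary at all; the corollary is a clean consequence, and my proof is essentially a two-line deduction from results already in hand.
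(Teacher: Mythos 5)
Your argument is correct and is exactly the paper's proof: the corollary is deduced by combining Proposition~\ref{prop:NonInvariants}(iii) with the fact from \cite[Corollary 7.1.12]{GRG1} that the number of indecomposable summands of $\mathbb{Q}G$ equals the number of conjugacy classes of cyclic subgroups of $G$. No further comment is needed.
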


Recall that Dade constructed pairs of groups which have isomorphic group algebras over any field, but these were not $p$-groups, cf.\ Theorem~\ref{th:Dade}. The last corollary shows that we still do not know groups of this kind, leading to:

\begin{problem}
Are there non-isomorphic $p$-groups which have isomorphic group algebras over any field?
\end{problem}

We note that as we saw in the proof of Lemma~\ref{lem:HAndHNotIsom} the examples $G$ and $H$ from Section~\ref{sec:Counterex} contain maximal abelian subgroups. Hence the maximal degree of a complex irreducible character of each group is $2$ by \cite[(12.11) Theorem]{Isaacs}. As moreover $G/G' \cong H/H'$ holds by Theorem~\ref{th:AbAndCent} we conclude that the complex group algebras of $G$ and $H$ are isomorphic. We also note that the last point of Proposition~\ref{prop:NonInvariants} has consequences for the question in how far the fusion system of a block is determined by the isomorphism class of this block, cf.\ \cite[Section 8.5]{LinckelmannVolII} for the theory.

Though we now have some non-invariants available, this makes it arguably even more interesting to determine exactly which properties of $G$ are in fact invariants of $kG$. Several fundamental properties of $p$-groups remain mysterious in this regard. One concerns the nilpotency class of $G$. We summarize the knowledge here:

\begin{theorem}\label{th:Class}
The nilpotency class of $G$ is an invariant of $kG$ if one of the following holds:
\begin{itemize}
\item[(i)] The exponent of $G$ is $p$.
\item[(ii)] $G'$ is cyclic.
\item[(iii)] $G$ is of class $2$.
\item[(iv)] $G$ is of maximal class.
\end{itemize}
If $k = \mathbb{F}_p$ and $G'$ is elementary abelian, then the nilpotency class is also an invariant of $kG$.
\end{theorem}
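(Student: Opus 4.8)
The plan is to show, case by case, that the nilpotency class $c$ of $G$ can be recovered from basis-independent data of $kG$, so that any $H$ with $kH \cong kG$ is forced to share it. Two observations feed every case. First, $|G'|$ is always an invariant: the order $|G|$ is the $k$-dimension of $kG$ and $G/G'$ is an invariant by Theorem~\ref{th:AbAndCent}, so $|G'| = |G|/|G/G'|$ is determined. Second, by Corollary~\ref{cor:DimQuots} all quotients $D_i(G)/D_{i+1}(G)$ are invariants, and hence so is the largest $i$ with $D_i(G) \neq 1$ (the Jennings length).

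The case (i) of exponent $p$ is then immediate: the exponent of $G$ is an invariant, so such an $H$ also has exponent $p$, and for a group of exponent $p$ the closed formula of Theorem~\ref{th:Jennings} collapses to $D_m(G) = \gamma_m(G)$ since every $p$-power term vanishes. Thus for both $G$ and $H$ the dimension series equals the lower central series, whence the class equals the Jennings length. For case (iii), if $G$ has class $2$ then $G' \leq Z(G)$, so $Z(G) \cap G' = G'$ has order $|G'|$. Since $Z(G) \cap G'$ is an invariant and $|H'| = |G'|$, any such $H$ satisfies $|Z(H) \cap H'| = |G'| = |H'|$, forcing $H' \leq Z(H)$; as $kG$ is noncommutative, $H$ is nonabelian and so of class exactly $2$. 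The final displayed case reduces to Theorem~\ref{th:BaginskiCarantiBaggi99Invs}: when $G'$ is elementary abelian, the invariants $D_i(G')/D_{i+1}(G')$ together with $|G'|$ force $\Phi(H') = D_2(H') = 1$, so $H'$ is elementary abelian too; then $\Phi(G') = \Phi(H') = 1$ turns the invariant ``class of $G/\Phi(G')$'' over $\mathbb{F}_p$ into ``class of $G$'', giving the claim.

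The remaining cases (ii) and (iv) are where the real work lies. For $G'$ cyclic, one first notes that $D_1(G')/D_2(G') = G'/\Phi(G')$ has order $p$, an invariant, so $H'/\Phi(H')$ has order $p$, whence $H'$ is cyclic, and matching $|H'| = |G'|$ gives $H' \cong G'$. The class is now governed by the descending chain $\gamma_2(G) = G' > \gamma_3(G) > \cdots$ inside the cyclic group $G'$, and the task is to pin down algebraically where this chain reaches $1$. For (iv), the invariants $|G| = p^n$, $|G/G'| = p^2$ (so $H$ is $2$-generated) and $|Z(H)| = |Z(G)| = p$ already constrain $H$ severely, and one wants to deduce that $H$ is again of maximal class, i.e.\ of class $n-1$.

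I expect the main obstacle to be exactly this last step in (ii) and (iv): reconciling the lower central series with the dimension-subgroup series once $p$-th powers intervene. Already the dihedral group of order $16$ has Jennings length $4$ but class $3$, so the clean identity $D_m = \gamma_m$ of case (i) is unavailable and one cannot simply read off the class from the Jennings length. The delicate point will be to extract, from the quotients $D_i(G)/D_{i+1}(G)$ and the power-map invariants of Theorem~\ref{th:OmegaAndAgemo}, enough information to separate the genuine lower-central contribution from the $p$-power contribution; for maximal class this amounts to certifying that a $2$-generated group of order $p^n$ with centre of order $p$ sharing all these invariants with $G$ must itself be of maximal class.
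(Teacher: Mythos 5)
Your treatment of (i), (iii) and the final elementary-abelian statement is correct and in line with what the paper indicates. For (i), the exponent is an invariant by K\"ulshammer's result and the closed formula $D_m(G)=\prod_{ip^j\geq m}\gamma_i(G)^{p^j}$ collapses to $D_m(G)=\gamma_m(G)$ when the exponent is $p$, so the class is the Jennings length. For (iii), comparing $|Z(G)\cap G'|$ (an invariant by Theorem~\ref{th:OmegaAndAgemo}) with $|G'|=|G|/|G/G'|$ does force $H'\leq Z(H)$. For the last statement, recovering that $H'$ is elementary abelian from the invariants $D_i(G')/D_{i+1}(G')$ and then invoking the class of $G/\Phi(G')$ from Theorem~\ref{th:BaginskiCarantiBaggi99Invs} is exactly the reduction the paper has in mind.

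However, cases (ii) and (iv) are not proved: you set them up and then explicitly stop at the hard step, and that step is the actual content of these cases. For (ii), knowing $H'\cong G'$ cyclic says nothing about where the chain $\gamma_2(H)>\gamma_3(H)>\cdots$ inside $H'$ terminates, and none of the invariants you have assembled (Jennings quotients, power maps, $Z(G)\cap G'$) obviously separates the lower-central contribution to $D_m$ from the $p$-power contribution; the paper defers this to \cite{BaginskiKonovalov07}. For (iv), the data you list --- $|G|=p^n$, $|G/G'|=p^2$, $|Z(G)|=p$ --- do \emph{not} characterize maximal class: for odd $p$ there are $2$-generated groups with $|G:G'|=p^2$ and centre of order $p$ whose class is strictly less than $n-1$ (Taussky's theorem, which would rescue you, is special to $p=2$). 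The paper's route is the characterization of maximal class by known invariants of $kG$ due to Bagi\'nski and Kurdics \cite[Corollary 3.1]{BaginskiKurdics}, which uses more refined information than the three quantities above. So the proposal correctly identifies where the difficulty lies but leaves half of the theorem unestablished; to be fair, the paper itself also only cites the relevant references for these two cases rather than proving them.
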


Points (i)--(iii) are consequences of Jennings' theory and the invariants described in Section~\ref{sec:maps}, we refer to \cite{BaginskiKonovalov07} for proofs. Point (iv) was proven by Bagi\'nski and Kurdics \cite[Corollary 3.1]{BaginskiKurdics} and is a consequence of their characterization of a group having maximal class by known invariants of $kG$. Finally, the last point is a consequence of the result of Bagi\'nski and Caranti given in Theorem~\ref{th:BaginskiCarantiBaggi99Invs}. But Theorem~\ref{th:Class} demonstrates that the following remains open:

\begin{problem}
Is the nilpotency class of $G$ an invariant of $kG$?
\end{problem}

While the dimension subgroup series of $G$ is strongly connected to invariants of $kG$ via the theory of Jennings, the connection of the lower central series $(\gamma_i(G))_{i \in \mathbb{N}}$ of $G$, a fundamental series in the study of $p$-groups, remains a secret. Apart from the quotient $G/G' = \gamma_1(G)/\gamma_2(G)$ no other subquotient of the lower central series is known to be an invariant.  The example \cite[Example 2.1]{BaginskiKurdics} shows that $I(k\gamma_3(G))kG$ is not independent of the group basis as a subset in $kG$. 
In \cite[Theorem 4.1]{MargolisStanojkovski21} it is shown that for odd $p$ and $G$ a $2$-generated group of class $3$ such that $\gamma_3(G)$ is of exponent $p$ the isomorphism types of $\gamma_2(G)$ and $\gamma_3(G)$ are invariants of $kG$ for $k = \mathbb{F}_p$. Remarkably the proof of this rather weak invariant seems to need some evolved calculations in certain quotients of $kG$, underlining how bad the connection between $kG$ and the lower central series of $G$ is understood.

An especially interesting subquotient of the lower central series of $G$ with regard to being an invariant of $G$ is $G/\gamma_3(G)$. Proving or disproving this invariant would solve:

\begin{problem}
Does (MIP) hold for groups of nilpotency class $2$?
\end{problem}

Already in the survey \cite{SandlingSurvey} Sandling wrote that it is a ``sad reflection on the state of the modular isomorphism problem that the case of class $2$ groups is yet to be decided in general''. Apart from Theorem~\ref{th:Sandling89}, which is included in \cite{SandlingSurvey}, the only significant improvement for groups of class $2$ seems to be the proof of Broche and del R\'io for $2$-generated groups of this type \cite{BrochedelRio21}. 

Probably the most interesting class of groups for which (MIP) remains open are groups of odd order. At least the most naive attempts to construct counterexamples of odd order as in Section~\ref{sec:Counterex} do not work, leading to:

\begin{problem}
Does (MIP) hold for groups of odd order?
\end{problem}

This includes as a subproblem a formulation which might be more approachable:

\begin{problem}
Does (MIP) hold for groups of exponent $p$?
\end{problem}

\section{Further methods}\label{sec:further}

We briefly mention other perspectives on modular group algebras and methods relevant for (MIP).

\subsection{Cohomology}\label{sec:Cohomology}

A powerful tool to study $p$-groups is given by their cohomology rings. As the $n$-th cohomology group of a $G$-module $M$ which is a $k$-vector space is formally defined as $\operatorname{Ext}^n_{kG}(k, M)$, it is an object solely depending on the group algebra $kG$. As such it is an invariant of $kG$, if one can define the module $M$ independently of $G$.

The most studied case is to let $M = k$ be the trivial module. Some basic properties of the cohomology ring $H^*(G, k)$ are then given as the dimension of the $n$-th cohomology groups. For $n =1$ this is exactly $G/\Phi(G)$, which we already observed to be an invariant in Section~\ref{sec:DimensionSubgroups}, while for $n = 2$ this is the smallest number of relations which is needed to define $G$ as a pro $p$-group \cite[(3.9.5) Corollary]{NeukirchSchmidtWinberg}. This number can indeed contribute new knowledge compared to other known invariants.
If one could systematically test the isomorphism of cohomology rings of two given $p$-groups, this would certainly provide a strong tool for (MIP), though there are groups known which have isomorphic cohomology rings, but for which (MIP) has nevertheless a positive answer. For instance for all groups of order $32$ the cohomology rings were computed by Rusin \cite{Rusin} and some turned out to be isomorphic, though (MIP) had been solved over the field of $2$ elements for groups of order $32$ already at the time \cite{Makasikis}. Later Leary found $3$-groups of order $3^n$, for $n \geq 5$, which have isomorphic cohomology rings over the integers and hence over any ring \cite{Leary3groups}. According to the comment at the end of \cite{SalimSandlingp5} these groups were the original motivation for Salim and Sandling to study (MIP) for groups of order $p^5$. We remark that Leary's groups lie in several classes for which (MIP) was solved later over the field with $p$ elements: they have a cyclic subgroup of index $9$; they have nilpotency class $3$ and elementary abelian commutator subgroup while they are also $2$-generated and the centralizer of the derived subgroup is abelian and a maximal subgroup. So they fall in three classes listed in Section~\ref{sec:History}.

If one can describe the cohomology ring of a $p$-group explicitly over the field of $p$ elements, then via coefficient change one gets it also for other fields: $H^*(G, k) \cong H^*(G, \mathbb{F}_p) \otimes_{\mathbb{F}_p}k$, cf.\ \cite[Section 3.4]{Evens91}. Hence cohomology rings can serve to solve (MIP) over any field. Indeed, while Makasikis had handled groups of order $32$ \cite{Makasikis}, Rusin's work was later used to prove it over any field \cite[Lemma 3.7]{NavarroSambale}. It was also used for $2$-groups $G$ with $[G:Z(G)] = |\Phi(G)| = 8$ \cite{MargolisSakuraiStanojkovski21}, again to avoid using strong invariants which are only available over the prime field. For such applications the algorithm and software to compute cohomology rings written by Green and King and the data base produced from it are very useful, the results being avilable at \cite{GreenKingWebsite} and the algorithm described in \cite{GreenKing11}.

A more systematic approach to (MIP) using cohomology rings and the Massey products therein was taken by Borge \cite{Borge04}. Some of her ideas were used by Ruiz and Viruel to study classifying spaces of $2$-groups of maximal class, in particular yielding an alternative proof of (MIP) over the prime field for those groups \cite{RuizViruel}.

In \cite{Quillen71} Quillen developed a rather general theory for the cohomology ring of a group $G$ over a field of characteristic $p$ and its spectrum of maximal ideals. He showed that it can be stratified by the corresponding spectra of the elementary abelian subgroups of $G$. For us it has the following consequence:

\begin{theorem}
For any integer $m$ the number of conjugacy classes in $G$ of maximal elementary abelian subgroups of rank $m$ is an invariant of $kG$.
\end{theorem}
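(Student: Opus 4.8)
The plan is to read off the desired count from the variety of the cohomology ring, which is an invariant of $kG$, by applying Quillen's stratification theorem. As already noted in this section, the $n$-th cohomology group $H^n(G,k) = \operatorname{Ext}^n_{kG}(k,k)$ depends only on $kG$ because the trivial module $k$ is the unique simple $kG$-module (Section~\ref{sec:DimensionSubgroups}). First I would upgrade this to the level of rings: any algebra isomorphism $kG \cong kH$ carries the trivial $kG$-module to the trivial $kH$-module, and hence induces an isomorphism of graded $k$-algebras $H^*(G,k) = \operatorname{Ext}^*_{kG}(k,k) \cong \operatorname{Ext}^*_{kH}(k,k) = H^*(H,k)$. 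So the whole cohomology ring $H^*(G,k)$ is an invariant of $kG$.

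From this graded algebra I would form the associated variety $V_G$, namely the maximal ideal spectrum of $H^*(G,k)$; for $p$ odd one passes first to the commutative subring of elements of even degree (or reduces modulo nilpotents), which does not alter the spectrum. This is a purely ring-theoretic construction, so $V_G$, together with its decomposition into irreducible components and the dimension of each component, is an invariant of $kG$. In particular, for each $m$ the number of irreducible components of $V_G$ of dimension $m$ is an invariant of $kG$.

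Next I would invoke Quillen's stratification theorem \cite{Quillen71}: the irreducible components of $V_G$ are in natural bijection with the conjugacy classes of maximal elementary abelian $p$-subgroups of $G$, the component attached to such a subgroup $E$ being the image of the map $V_E \to V_G$ induced by restriction of cohomology and having dimension equal to the rank of $E$. Combining this with the preceding paragraph, the number of conjugacy classes of maximal elementary abelian subgroups of rank $m$ equals the number of irreducible components of $V_G$ of dimension $m$, which is an invariant of $kG$; this proves the theorem.

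The substance of the argument is entirely concentrated in Quillen's theorem, the deep external input; granting it, the proof is a short translation. The only points requiring mild care are that distinct conjugacy classes of maximal elementary abelian subgroups genuinely yield distinct irreducible components, so that the count stratified by dimension is exact, and the routine passage to a commutative ring when $p$ is odd so that the spectrum is well defined.
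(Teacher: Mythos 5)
Your proposal is correct and follows exactly the route the paper takes: the cohomology ring $H^*(G,k)=\operatorname{Ext}^*_{kG}(k,k)$ is an invariant of $kG$ because the trivial module is the unique simple module, and Quillen's stratification of the maximal ideal spectrum then identifies the irreducible components of dimension $m$ with the conjugacy classes of maximal elementary abelian subgroups of rank $m$. The paper only sketches this in a sentence citing \cite{Quillen71}, so your write-up is simply a more detailed version of the same argument, with the technical points (passing to the even-degree or reduced ring, distinctness of components for non-conjugate subgroups) correctly flagged.
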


This is in general a strong invariant. It can be used for instance for the $2$-groups of maximal class.

Any finite-dimensional algebra $A$ carries an $A \otimes A^{\text{op}}$-module structure which allows us to define its $n$-th Hoschild cohomology group $HH^n(A) = \operatorname{Ext}^n_{A \otimes A^{\text{op}}}(A, A)$. For $n = 0$ this is just the center of $A$ \cite[Section 1.2]{WitherspoonHochschildBook}. But already for $n = 1$, this provides new invariants of $kG$. Denote by $Cl(G)$ the set of conjugacy classes of $G$ and by $g^G$ the conjugacy class of an element $g$ in $G$. For group algebras we get a decomposition $HH^n(kG) = \oplus_{g^G \in Cl(G)} H^n(C_G(g), k)$, where the sum runs over the conjugacy classes of $G$ \cite[Section 9.5]{WitherspoonHochschildBook}. The dimension of this space hence gives an invariant which is computable in terms of the cohomology of subgroups of $G$. This is especially handy for $n = 1$:

\begin{theorem}
The number $\sum_{g^G \in Cl(G)} \log_p |C_G(g)/\Phi(C_G(g))|$ is an invariant of $kG$.
\end{theorem}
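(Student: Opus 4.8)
The plan is to recognise the quantity in question as $\dim_k HH^1(kG)$ and then to argue that this dimension is manifestly an invariant of the algebra $kG$. First I would record that the first Hochschild cohomology $HH^1(kG) = \operatorname{Ext}^1_{kG \otimes kG^{\mathrm{op}}}(kG, kG)$ is defined entirely in terms of the $k$-algebra structure of $kG$; in particular any $k$-algebra isomorphism $kG \cong kH$ induces a $k$-linear isomorphism $HH^1(kG) \cong HH^1(kH)$, so $\dim_k HH^1(kG)$ is an invariant of $kG$.

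The second step is to invoke the decomposition recalled above,
\[HH^1(kG) = \bigoplus_{g^G \in Cl(G)} H^1(C_G(g), k),\]
and to take $k$-dimensions, which reduces the problem to computing $\dim_k H^1(C_G(g), k)$ for each conjugacy class $g^G$. Note that $C_G(g)$ is well defined up to conjugacy, hence up to isomorphism, by the class $g^G$, so each summand is unambiguous.

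The core computation is the identification of $\dim_k H^1(P, k)$ for a finite $p$-group $P$ with trivial coefficients. Since $k$ is the trivial module, $H^1(P, k) = \operatorname{Hom}(P, (k,+))$, the group of homomorphisms into the additive group of $k$. As $(k,+)$ is an elementary abelian $p$-group, every such homomorphism kills $P'P^p = \Phi(P)$, so $H^1(P, k) = \operatorname{Hom}(P/\Phi(P), k)$. Writing $P/\Phi(P) \cong \mathbb{F}_p^{d}$ with $d = \log_p |P/\Phi(P)|$, homomorphisms into $(k,+)$ are exactly $\mathbb{F}_p$-linear maps $\mathbb{F}_p^d \to k$, so $H^1(P, k) \cong k^d$ and $\dim_k H^1(P, k) = \log_p |P/\Phi(P)|$. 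Substituting $P = C_G(g)$ and summing over the decomposition yields
\[\dim_k HH^1(kG) = \sum_{g^G \in Cl(G)} \log_p |C_G(g)/\Phi(C_G(g))|,\]
and since the left-hand side is an invariant by the first step, so is the right-hand side.

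I do not expect a serious obstacle here: once the Hochschild decomposition is granted, the argument is essentially a dimension count together with transport of structure across the algebra isomorphism. The one point requiring slight care is the last computation, namely verifying that $\dim_k H^1(P,k)$ is independent of $k$ and equals the generator rank $\log_p|P/\Phi(P)|$; this is precisely where the characteristic-$p$ hypothesis enters, through the fact that $(k,+)$ has exponent $p$ and therefore forces every homomorphism from $P$ to factor through $P/\Phi(P)$.
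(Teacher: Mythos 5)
Your proposal is correct and follows essentially the same route as the paper: identify the quantity as $\dim_k HH^1(kG)$, invoke the centralizer decomposition $HH^1(kG) = \oplus_{g^G \in Cl(G)} H^1(C_G(g),k)$, and compute $\dim_k H^1(P,k) = \log_p|P/\Phi(P)|$ via $\operatorname{Hom}(P/\Phi(P),(k,+))$. The paper only sketches this, so your filled-in dimension count is a faithful completion of its argument.
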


This invariant is sometimes called the ``Roggenkamp parameter'' and is especially useful for computer aided investigations of (MIP). The first Hochschild cohomology group $HH^1(kG)$ can also be interpreted as the group of outer derivations of $kG$ \cite[Section 1.2]{WitherspoonHochschildBook}. This allows calculations in this group which do not require homological algebra.

With all the powerful machinery developed in the cohomology theory of $p$-groups, it seems certain it could provide more knowledge on properties of group bases of modular group algebras, a hope also expressed in \cite{HertweckSoriano06}. We refer to the many textbooks on cohomology of groups for further details, some of which, in particular, contain Quillen's theory \cite{Evens91, BensonBookII}.

\subsection{Lie methods}

As noted in Section~\ref{sec:quotients} the group algebra carries a Lie algebra structure which was explored for instance by Zassenhaus or Passi and Sehgal. But there are also other Lie algebras associated to $kG$. One such algebra was given by Quillen \cite{Quillen68} based also on the work of Jennings and Lazard. Note that 
\[\text{gr}(kG) = \oplus_{n \geq 1} I(kG)^n/I(kG)^{n+1}\]
is a graded restricted Lie algebra. Recall that $[D_n(G), D_m(G)] \subseteq D_{n+m}(G)$ and $D_n(G)^p \subseteq D_{np}(G)$, so that we can equip $\oplus_{n \geq 1} D_n(G)/D_{n+1}(G)$ with the structure of a restricted Lie algebra as well, the Lie bracket being given by commutators and the $p$-map by taking the $p$-th power. This algebra embeds in $\text{gr}(kG)$ by sending an element $g \in D_n(G)/D_{n+1}(G)$ to $g-1 \in I(kG)^n/I(kG)^{n+1}$. It turns out that $\text{gr}(kG)$ is the universal enveloping algebra of the image of this map and moreover this image coincides with the subalgebra generated by the elements of degree $1$. Hence, the isomorphism type of $\oplus_{n \geq 1} D_n(G)/D_{n+1}(G)$ as a restricted Lie algebra is an invariant of $kG$. 

This invariant has not been used much to study (MIP). In Salim's thesis an example is included of how it can be used for certain groups of order $3^5$ \cite{SalimThesis}, as also mentioned in \cite{SalimSandlingp5}. One can understand the most common application of the kernel size method described in Section~\ref{sec:maps} as a calculation in $\text{gr}(kG)$, just using the power and not the Lie structure of this algebra. In \cite[Theorem 5.8]{MargolisStanojkovski21} the algebra $\oplus_{n \geq 1} D_n(G)/D_{n+1}(G)$ is used to provide an easy argument that groups coming from two natural subfamilies, in a class of 2-generated groups of unbounded class, cannot have isomorphic group algebras.

Another Lie structure associated to a modular group algebra is given by Hochschild cohomology groups. Defining $HH^*(kG) = \oplus_{n \geq 0}HH^n(kG)$ this structure is a Gerstenhaber algebra and in particular a graded Lie algebra of degree $-1$, as shown by Gerstenhaber \cite{Gerstenhaber} (cf.\ also \cite[Section 1.4]{WitherspoonHochschildBook}). This implies that $HH^1(kG)$ is a Lie algebra by itself and when we view $HH^1(kG)$ as the outer derivations of $kG$ the Lie bracket is just $[f,g] = f \circ g - g \circ f$. 
For group algebras the Hochschild cohomology groups are also Batalin--Vilkovisky algebras, as shown by Tradler \cite{Tradler08} and the Gerstenhaber structure can be derived from the Batalin-Vilkovisky algebra and the cup product. 

These structures have not been used much to obtain properties of $G$ which might also be due to the fact that explicit calculations of Hochschild cohomology groups are very cumbersome, see e.g.\ \cite{SanchezFlores}. But recently some connections have been established which can be seen as leading to invariants of $kG$, though so far these seem to include no new invariants. E.g.\ Benson, Kessar and Linckelmann established a connection between $HH^1(kG)$ and $Z(G)/(Z(G) \cap \Phi(G))$ \cite{BensonKessarLinckelmann}, an invariant included in Theorem~\ref{th:OmegaAndAgemo}, while Briggs and Rubio y Degrassi showed that the maximal toral rank of $HH^1(kG)$ coincides with the rank of $G/\Phi(G)$  \cite[Proposition 5.12]{BriggsRubioYDegrassi}, an invariant following from the theory of Jennings. Note that in the situation of (MIP) we always have $HH^1(kG) \neq 0$, so that the first Hochschild cohomology will contain at least some information on $G$. This follows as $kG$ always has a non-trivial outer derivation, cf.\ the proof of \cite[Theorem 2.2]{FarkasGeissMarcos}.

As with the cohomology of $p$-groups, the theory of Lie algebras should have the potential to establish new interesting connections between the structure of $kG$ and $G$, though the Lie algebras themselves are not that easy to access either. They have played a significant role though in the investigation of the unit group $U(kG)$, cf.\ e.g.\ \cite{Shalev91}.

\subsection{Computer algorithms}\label{sec:computers}

As (MIP) studies a finite object, when one assumes that $k$ is finite, it is natural to use computers for its investigation. Indeed, at least from the 1970's on this has been done and references appear in \cite{MoranTench77, Ivory80} and later in several articles of Sandling who also references unpublished work of other authors \cite{Sandling84, Sandling89}. In these early stages the results achieved usually concerned properties or even full descriptions of the unit group of $kG$. Though positive results on (MIP) for groups of order $3^5$, $3^6$ or even $p^5$ for all primes are mentioned, these have never been published.
Coleman wrote a program which allowed him to calculate the whole multiplication table of $kG$ and perform experiments with units or ideals \cite{Coleman92}.

A first algorithm that could in principle settle (MIP) for any given pair of groups was designed by Roggenkamp and Scott and implemented with improvements by Wursthorn in his Diplomarbeit \cite{Wursthorn90}. The algorithm, which is also described in \cite{Wursthorn93}, allows for $k = \mathbb{F}_p$ to compute the ring homomorphisms from $kG$ to $kH$ or rather to $kH/I(kH)^n$ for an increasing integer $n$. This algorithm was first successfully applied for groups of order $2^6$ \cite{Wursthorn93} and later for groups of order $2^7$ \cite{BleherKimmerleRoggenkampWursthorn99}. The implementation is not available nowadays anymore.

A second algorithm based on the ideas underlying also the algorithm which classifies groups of small order was developed by Eick \cite{Eick08}. This algorithm allows to compute a normal form of the multiplication table of a finite nilpotent algebra in prime characteristic and hence the algebras $I(kG)/I(kG)^n$ and $I(kH)/I(kH)^n$ can be compared for increasing $n$. Thus, it can also solve, in principle, (MIP) for any given pair of groups over a finite field. The algorithm was implemented by Eick in GAP \cite{ModIsom} and used to solve (MIP) over the prime field for groups of order $2^8$, $3^6$ and $3^7$. It was also used to solve it for groups of order $2^9$ in a big parallel computation \cite{EickKonovalov11}. However, it was later discovered that the implementation of the algorithm contained a programming error, so that it became necessary to recompute these results \cite{MargolisMoede}. This has been done, along with an improvement of the algorithm \cite{ModIsomExt}, for the orders $2^8$, $3^6$ and $3^7$, but as the correction of the error significantly increased the run time and memory usage of the algorithm, it was decided not to systematically recompute the groups of order $2^9$. After the discovery of the examples in Section~\ref{sec:Counterex} it was however checked that among the $2$- and $3$-generated groups of order $2^9$ these are the only negative solutions of (MIP) \cite{MargolisMoede}. 

Though the algorithms of Wursthorn and Eick allow in principle to solve (MIP) for any given pair of groups, it was already foreseen by Coleman (commenting on Wursthorn's implementation) that its ``disadvantage is that the amount of time [...] might be prohibitive''. Indeed, to solve (MIP) for a certain pair of groups of order $5^6$ in \cite{MargolisMoede} a computation was performed which ran for about 4 months and needed 200GB of memory (which turned out not to be enough for other cases of that order). For these reasons in a systematic study of (MIP) for small orders usually first group-theoretical invariants of $kG$ are computed and only to those groups which share all these invariants the algorithms are applied, we refer to \cite{Eick08, MargolisMoede} for more details. 

Experiments in the style performed by Coleman in the early 1990's became open to everybody a decade later thanks to the GAP package LAGUNA \cite{LAGUNA} which allows to perform computations in finite group algebras rather efficiently. The package UnitLib \cite{UnitLib} contains a library of unit groups of modular group algebras of $p$-groups for groups of small order and can also be used to obtain new ideas.

Moreover, the solution of (MIP) for most $2$-groups of almost maximal class by Bagi\'nski and Konovalov was aided by computations of invariants for various groups of small order  \cite{BaginskiKonovalov04}.

\subsection{Automorphisms and profinite approach} 

The general Isomorphism Problem or related problems can also be regarded as questions on the automorphism group of a group algebra. While for $(\mathbb{Z}$-IP) this was a fruitful approach, this was less so for (MIP) and very little is known in general on $\operatorname{Aut}(kG)$. The first result seems to be that as soon as the order of $G$ is bigger than $2$ the algebra $kG$ has an outer automorphism, i.e.\ one not induced by conjugation with an element of $U(kG)$. This was observed by Carns and Chao \cite{CarnsChao72}. Later Bagi\'nski made some special observations on the automorphism of $kG$ for $G$ a $2$-group of maximal class \cite{Baginski92}.

A more systematic approach was taken by R\"ohl \cite{Roehl87, Roehl90}. He showed that if $G$ is a $d$-generated group and $A_d$ the completed group algebra of the free group on $d$ generators, then an automorphism of $kG$ can be lifted to an automorphism of $A_d$ and this is also true for unitary automorphisms. Here an automorphism of $kG$ is called \emph{unitary}, if it induces the identity of $I(kG)/I(kG)^2$. As any power of the augmentation ideal is invariant under automorphisms of $kG$, using the connection of dimension subgroups and powers of the augmentation ideal R\"ohl proved:

\begin{theorem}\cite{Roehl90}
Let $k = \mathbb{F}_p$ and $F$ a free group. Let $R$ be a normal subgroup of $F$ such that for some $n$ we have $D_{n+1}(F) \subseteq R \subsetneq D_n(F)$. Then an isomorphism $k(F/R)/I(k(F/R))^{n+1} \cong kG/I(kG)^{n+1}$ implies an isomorphism $F/R \cong G$. In particular, (MIP) holds for $F/R$. 
\end{theorem}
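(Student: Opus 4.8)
The plan is to realise both groups as quotients of the fixed free group $F$ and to transport the relations of $F/R$ onto those of $G$ by lifting the algebra isomorphism to the completed free group algebra $A_d$, where $d=\operatorname{rank}(F)$. Write $\bar F=F/R$. \emph{Set-up.} Using the closed Jennings formula $D_m(H)=\prod_{ip^j\ge m}\gamma_i(H)^{p^j}$, which is compatible with quotients, I record $D_m(\bar F)=D_m(F)R/R$, so $D_{n+1}(\bar F)=1$ while $D_n(\bar F)\neq 1$ and $\bar F$ is a finite $p$-group. A key preliminary is that $I(kD_{n+1}(G))kG\subseteq I(kG)^{n+1}$, since every $g\in D_{n+1}(G)=G\cap(1+I(kG)^{n+1})$ satisfies $g-1\in I(kG)^{n+1}$; hence $kG/I(kG)^{n+1}$ depends only on $G/D_{n+1}(G)$, and I may assume $D_{n+1}(G)=1$ (this is harmless, because in the application to (MIP) a full isomorphism $kG\cong k\bar F$ forces $d_m(G)=0$ for $m>n$ by Corollary~\ref{cor:DimQuots}). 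As the augmentation ideal is the radical, the isomorphism preserves $I$ and its powers, so by Theorem~\ref{th:Jennings} the ranks $d_i:=\log_p|D_i/D_{i+1}|$ agree, $d_i(G)=d_i(\bar F)$, for $1\le i\le n$; in particular $G$ is $d$-generated, and fixing a minimal generating set yields a surjection $F\to G$ with $G=F/R'$.

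\emph{Both relation groups lie at depth $n$.} Since $R\subsetneq D_n(F)$ we have $R\subseteq D_i(F)$ for all $i\le n$, whence $d_i(\bar F)=d_i(F)$ for $i<n$; transported to $G$ this gives $|G/D_n(G)|=|F/D_n(F)|$. As $G/D_n(G)=F/D_n(F)R'$ is a quotient of $F/D_n(F)$ of equal order, it equals it, i.e.\ $R'\subseteq D_n(F)$. Combined with $D_{n+1}(F)\subseteq R'$ (from $D_{n+1}(G)=1$) and $D_{n+1}(F)\subseteq R\subsetneq D_n(F)$, both $\bar F$ and $G$ are central quotients of the finite $p$-group $\mathcal F:=F/D_{n+1}(F)$ by subgroups $\bar R,\bar R'$ of the central layer $D_n(\mathcal F)$.

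\emph{Lifting and transport.} It now suffices to exhibit an automorphism of $\mathcal F$ carrying $\bar R$ onto $\bar R'$. For this I would invoke R\"ohl's lifting recalled above: Corollary~\ref{cor:GenModI2} lets me lift the images of the free generators (which again generate), so the isomorphism $k\bar F/I^{n+1}\cong kG/I^{n+1}$ lifts to an automorphism $\Psi$ of $A_d$ whose unitary part is controlled separately and whose remaining part is induced by an automorphism $\sigma$ of $F$ (an automorphism of $\mathcal F$ lifts to $F$ because generating modulo $D_{n+1}(F)$ forces generating $F$). By construction $\Psi$ maps $(R-1)A_d+I(A_d)^{n+1}$ onto $(R'-1)A_d+I(A_d)^{n+1}$; translating through the Magnus correspondence between normal subgroups of $F$ and closed ideals of $A_d$, together with $D_m(F)=F\cap(1+I(A_d)^m)$, gives $\sigma(R)D_{n+1}(F)=R'D_{n+1}(F)$. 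As $\sigma$ fixes the characteristic subgroup $D_{n+1}(F)$ and $\sigma(R)\supseteq D_{n+1}(F)$, this sharpens to $\sigma(R)=R'$, so $\sigma$ induces $F/R\cong F/R'=G$. The assertion on (MIP) is the special case in which the hypothesis arises by restricting a full isomorphism $kG\cong k(F/R)$, which also supplies $D_{n+1}(G)=1$.

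\emph{Main obstacle.} The decisive point is the passage from the truncated isomorphism — which a priori only controls structure modulo $I^{n+1}$, hence modulo $D_{n+1}(F)$ — to an honest equality of relation subgroups. This is precisely where the lift to $A_d$ and the residual nilpotence $\bigcap_m D_m(F)=1$ of the free group are indispensable, converting a ``mod $D_{n+1}$'' matching into a group isomorphism. The technical crux within this is controlling the unitary part of $\Psi$ so that it descends to an automorphism of $\mathcal F$ rather than remaining a mere algebra automorphism; this is exactly the content of R\"ohl's theorem that unitary automorphisms also lift.
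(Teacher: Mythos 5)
The paper does not actually prove this theorem --- it is cited from R\"ohl's work, with only the remark that the argument lifts automorphisms to the completed free group algebra $A_d$ and generalizes the Zassenhaus-ideal/complement technique of Theorem~\ref{th:LP72}. Your architecture matches that description, and your preliminary reductions are correct and carefully done: the computation $D_m(F/R)=D_m(F)R/R$, the observation that $kG/I(kG)^{n+1}$ only sees $G/D_{n+1}(G)$ (so that the statement must be read with $D_{n+1}(G)=1$, which the full isomorphism supplies in the (MIP) application), and the sandwiching $D_{n+1}(F)\subseteq R'\subseteq D_n(F)$ obtained by comparing $|G/D_n(G)|$ with $|F/D_n(F)|$ are all sound, and the last of these is exactly where the hypothesis on $R$ is consumed.

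The gap is in the ``Lifting and transport'' step, which is the entire content of the theorem and is currently asserted rather than proved. First, the appeal to ``the Magnus correspondence between normal subgroups of $F$ and closed ideals of $A_d$'' does not by itself yield $\sigma(R)D_{n+1}(F)=R'D_{n+1}(F)$; and second, your diagnosis of the crux is off: what must be controlled is not that the unitary part of $\Psi$ ``descends to an automorphism of $\mathcal F$'' (it is an algebra automorphism and need not preserve any group), nor is it supplied by the fact that unitary automorphisms \emph{lift} --- that is a different statement. What the argument actually needs is the following. Writing $\Psi=\sigma_*\circ\Psi_0$ with $\sigma$ an automorphism of the free pro-$p$ group matching $\Psi$ on $I/I^2$ (note: one must work pro-$p$ here, since $\operatorname{Aut}(F)\to GL_d(\mathbb{F}_p)$ need not be surjective for the discrete free group) and $\Psi_0$ unitary, one observes that because $R-1\subseteq I(A_d)^n$ and $(R-1)I(A_d)\subseteq I(A_d)^{n+1}$, the kernel is $K=\operatorname{span}_k(R-1)+I(A_d)^{n+1}$, i.e.\ it is $I^{n+1}$ plus a subspace of a \emph{single} graded layer $I^n/I^{n+1}$. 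A unitary automorphism induces the identity on all of $\operatorname{gr}(A_d)=\oplus_m I^m/I^{m+1}$, since this graded algebra is generated in degree one; hence $\Psi_0(K)=K$, so $\sigma_*(K)=K'$, and comparing the two subspaces of $I^n/I^{n+1}$ via the injection $D_n/D_{n+1}\hookrightarrow I^n/I^{n+1}$, $g\mapsto g-1$ (whose image of a subgroup is already an $\mathbb{F}_p$-subspace) gives $\sigma(R)D_{n+1}(F)=R'D_{n+1}(F)$ and then $\sigma(R)=R'$. Without the triviality of unitary automorphisms on the associated graded --- which is precisely where the hypothesis $D_{n+1}(F)\subseteq R\subsetneq D_n(F)$ does its work by confining $K$ to one layer --- the transport step does not go through.
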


This can also be seen as a generalization of Theorem~\ref{th:LP72} and the proof also uses Zassenhaus ideals and their complements in quotients modulo powers of the augmentation ideal. R\"ohl moreover showed that under certain conditions the automorphism group of $kG$ can be written as a product of the automorphisms of $G$ and the unitary automorphisms of $kG$. Note that the isomorphism one gets from the examples in Section~\ref{sec:Counterex} is in fact unitary.
We refer to \cite[Chapter 7]{WilsonBookProfinite} for basic definitions and facts on completed group algebras. It was shown though by Hertweck and Soriano how R\"ohl's results can also be achieved in a purely finite context \cite{HertweckSorianoFrattini}. They also obtain results on the automorphisms of $kG$ for $G$ elementary abelian or a dihedral group of order $8$.


Eick's algorithm, which was mentioned in Section~\ref{sec:computers} allows to compute $\operatorname{Aut}(kG)$ explicitly. However, the commands of the package \cite{ModIsom} do return the automorphism group not with respect to the group basis $G$, but to the normal form of $kG$ which is computed in parallel. This makes it harder to use it for practical investigations.

An idea to go the reverse way and study outer automorphisms of $G$ using the group algebra was suggested by Bagi\'nski: assume $J$ is an ideal of $kG$ such that $G$ maps injectively to $kG/J$. Then one could study the normalizer of $G$ in the unit group of $kG/J$ and obtain information on outer automorphisms of $G$ in this way. Note that when $J = 0$ this is not possible by Theorem~\ref{th:ColemanLemma}.

\subsection{The unit group}\label{sec:UnitGroup}

As the Isomorphism Problem can be formulated in terms of the group bases contained in $RG$, i.e.\ certain subgroups of $U(RG)$, one approach to it could be to study the subgroup structure of $U(RG)$. While in the case of integral group rings this strategy had some success, cf.\ Section~\ref{sec:GeneralIPs}, in the case of (MIP) this has been less productive. Note that if $k = \mathbb{F}_p$, the normalized unit group $V(kG)$ has order $p^{|G|-1}$. For some groups $G$ this unit group can be explicitly described and we saw in Section~\ref{sec:SmallGroupAlgebras} how Sandling's description of an explicit independent generating set in the abelian case was useful to solve (MIP) for new classes. Later Sandling also described the unit group for $G$ of order $16$ \cite{Sandling92Order16}. An algorithm of A.~Bovdi~\cite{Bovdi98} was used by Konovalov and Yakimenko to provide a library of the normalized unit groups over the prime field for any $p$-group of order smaller than $243$ \cite{UnitLib}. This library can be used to generate new ideas on the unit group.

But in general the structure of $V(kG)$ is too difficult to provide detailed insight on $G$. Though it has been investigated more intensively than (MIP), even basic parameters of $V(kG)$, like its exponent or its nilpotency class, are not understood in general, see e.g.\ \cite{Shalev91}. This is a reason for the strategy described in Section~\ref{sec:quotients} which tries to simplify the structure by considering quotients of $kG$. For an overview on many problems which have been studied for $U(kG)$ we refer to the survey \cite{BovdiUnitsSurvey}. We are not aware of any newer survey, though many new results have been achieved since.

A question attributed to Berman asked if an isomorphism of unit groups $U(kG) \cong U(kH)$ will imply an isomorphism $G \cong H$ and has been positively answered for some cases, e.g.\ in \cite{BaloghBovdi04}. The examples from Section~\ref{sec:Counterex} give also negative answers to this question. A special subgroup of units in $kG$ which has found some attention in the literature are so-called \emph{unitary units}, these are elements $u \in V(kG)$ such that $u^{-1} = u^*$ where $*$ denotes the linear extension of the inversion map $g \mapsto g^{-1}$ to $kG$. This is a subgroup smaller than $V(kG)$, so it might be easier to investigate, but with regard to (MIP) it is not clear what happens to this subgroup when group bases are changed. See e.g.\ \cite{BaloghBovdi20, Balogh21}.

\subsection{Normal complements}\label{sec:NormalComplements}

A question on the unit group of $RG$ especially connected to the isomorphism problem is the existence of a normal complement to the group $G$, i.e.\ when does $V(RG) = N \rtimes G$ hold for some normal subgroup $N$ of $V(RG)$. In the situation of $(\mathbb{Z}$-IP) the existence of a torsion free normal complement gives a positive solution to the isomorphism problem, cf.\ \cite[Chapter 4]{Sehgal93}. In the modular group algebra of a $p$-group we do not have elements of infinite order, but still one might hope that the existence of a complement could provide a positive answer to (MIP).  This was claimed in \cite{Roehl91}, but the proof is know to contain an error, so that this question remains unanswered. Of course, if one can construct a complement in a manner which does not depend on the group basis $G$, then one obtains a positive solution for (MIP).

We give a brief account of what is concretely known on the question. All results are for $k = \mathbb{F}_p$.
Several early papers studying the existence of normal complements showed that a complement exists for abelian groups, which is also a consequence of Sandling's work in \cite{Sandling84}, and also for some non-abelian groups of small order \cite{MoranTench77, Johnson78, Ivory80}, but there is no complement for dihedral, semidihedral or quaternion groups of order at least $16$ \cite{Ivory80}. The work of Passi and Sehgal described in Section~\ref{sec:LP} implies that a complement exists when $D_3(G) = 1$ and the work of Sandling on small group algebras, as explained in Section~\ref{sec:SmallGroupAlgebras}, implies the existence also in case $(G')^p\gamma_3(G) = 1$. Moreover by work of R\"ohl a complement exists for $G$ a unit group of an $\mathbb{F}_p$-algebra or a normal subgroup of unitriangular matrices \cite{Roehl91}. Hertweck's proof of (MIP) for groups satisfying $D_4(G)=1$ for $p$ odd follows from the construction of a complement also in this case. In \cite{KaurKhan19} a complement is exhibited for split metacyclic $p$-groups of class $2$, which are also covered by Sandling's result. In \cite{KaurKhan20} the existence of a complement is claimed for split metabelian groups of exponent $p$, but the proof of this result contains an error. We refer to \cite{Sandling89} where some other early results are mentioned.

A new approach to (MIP) which can be viewed also from the perspective of normal complements was given by Sakurai \cite{Sakurai}. He showed that (MIP) holds for so-called hereditary groups over $k$. We will define his concept in a less technical manner. Namely call a group $H$ a \emph{virtual unit group} over a ring $R$ if there are finite $R$-algebras $A$ and $B$ such that the unit group of $A$ is isomorphic to the direct product of $H$ and the unit group of $B$. Then $G$ is \emph{hereditary} over $R$ if the direct product of finitely many copies of each subgroup of $G$, including $G$ itself, is a virtual unit group over $R$. This criterion can be used to reprove for instance (MIP) for groups satisfying $D_3(G)=1$ over the field of $p$ elements. To apply it more broadly one would need to understand which groups appear as unit groups of finite $k$-algebras. Now if $G$ has a normal complement $N$ in $V(kG)$ such that $N-1$ is an ideal of $kG$, then $G$ is the unit group of $kG/(N-1)$. This calls the attention to answer not only whether $G$ has a normal complement in $V(kG)$, but whether this complement is coming from an ideal, a question raised also in \cite[Section 5]{Bovdi98}. Most of the results cited in the previous paragraph come from ideals, but  in \cite[Section 4.3]{HertweckSoriano06} an example is given where this is not the case and it is not hard to check that the explicit example given in \cite{Hertweck07} is not coming from an ideal either. The question of which groups come up as unit groups of $k$-algebras can also be approached using results on which groups come up as unit groups of rings, also known as Fuchs' Problem. Though to combine results with Sakurai's criterion one would rather need to study normalized unit groups, which need to be properly defined for general rings, but at least in characteristic $2$ results such as \cite{SwartzWerner}\footnote{A erroneous theorem relevant for an application of Sakurai's criterion has been corrected in the ArXiv version of the paper.} could be directly applied. 

\vspace*{1cm}

\textbf{Acknowledgment:} I am grateful to all my colleagues with whom I have studied various aspects of the Modular Isomorphism Problem: Diego Garc\'ia Lucas, Tobias Moede, \'Angel del R\'io, Taro Sakurai, Mima Stanojkovski and Matteo Vannacci. I also thank Czes\l{}aw Bagi\'nski for answering many of my questions. Moreover, the comments of Bagi\'nski, Garc\'ia Lucas, Sakurai and Stanojkovski as well as Burkhard K\"ulshammer and Lleonard Rubio y Degrassi have been very helpful in improving this exposition.

\bibliographystyle{amsalpha}
\bibliography{MIP}

\end{document}